\documentclass[12pt]{amsart}
\usepackage{enumerate}
\usepackage{enumitem}
\usepackage{graphicx,graphics}
\usepackage{soul}
\usepackage{amsfonts}
\usepackage{amssymb}
\usepackage{amsthm}
\usepackage{amsmath}
\usepackage{marginnote}
\usepackage{mathrsfs,color}
\usepackage{tikz} % diagramas
\usepackage{hyperref}
\usepackage{verbatim}
\input{xy}
\xyoption{all}

\numberwithin{equation}{section}

\newtheorem{theorem}{Theorem}[section]
\newtheorem{corollary}{Corollary}[theorem]
\newtheorem{lemma}[theorem]{Lemma}
\newtheorem{proposition}[theorem]{Proposition}
\theoremstyle{definition}
\newtheorem{definition}[theorem]{Definition}
\theoremstyle{definition}
\newtheorem{problem}[theorem]{Problem}
\theoremstyle{definition}
\newtheorem{example}[theorem]{Example}
\theoremstyle{definition}
\newtheorem{remark}[theorem]{Remark}
\theoremstyle{theorem}
\newtheorem{claim}{Claim}
\theoremstyle{definition}

\begin{document}

\title[Decreasing involutions and $\mathbb{Z}_{2}$-means]{Decreasing involutions and $\mathbb{Z}_{2}$-means}

\author{Natalia Jonard-P\'erez} \email{nat@ciencias.unam.mx}

\author{Ananda L\'opez-Poo} \email{anandalc95@gmail.com}

\address{Departamento de  Matem\'aticas,
Facultad de Ciencias, Universidad Nacional Aut\'onoma de M\'exico, 04510 Ciudad de M\'exico, M\'exico.}

\keywords{Equivariant  retract, $n$-mean, involutions, duality, group actions, topological lattices}

\subjclass[2020]{Primary: 
%06D50, %Lattices and duality
22A26,  	%Topological semilattices, lattices and applications
26E60,  %Means
%54C05, Continuous maps 
%54C99, % Maps and general types of topological spaces defined by maps none of the above
 54C55, %Absolute neighborhood extensor, absolute extensor, absolute neighborhood retract (ANR), absolute retract spaces (general properties) 
  54H15, %Transformation groups and semigroups (topological aspects)  
Secondary: 
26B25, %	Convexity of real functions of several variables, generalizations
52A20 %Convex sets in $n$ dimensions (including convex hypersurfaces)
54H12,  	%Topological lattices, etc. (topological aspects)
	%52A05,  	%Convex sets without dimension restrictions (aspects of convex geometry)
    %54B20  %	Hyperspaces in general topology}
    %91B14, % Social Choice    
%01A60, %History of mathematics in the 20th century 
%55P20, %Eilenberg-Mac Lane spaces
}
\thanks{This work has been supported by  PAPIIT grant IN104525 (UNAM, M\'exico). %Also, the second author has been supported by Conahcyt grant 712523.
}

\begin{abstract} 

An \(n\)-mean on a topological space \(X\) is a continuous symmetric map
\(p\colon X^n\to X\) satisfying \(p(x,\ldots,x)=x\) for every
\(x\in X\). If \(X\) is a \(G\)-space, such an \(n\)-mean is equivariant
if $p(gx_1,\ldots,gx_n)=g p(x_1,\ldots,x_n)$
for every \(g\in G\) and \(x_1,\ldots,x_n\in X\). For finite \(G\), we
prove that, in the presence of an equivariant \(n\)-mean with
 \(n\) a multiple of $\lvert G\rvert$, the $\mathrm{ANE}$, $\mathrm{AE}$, and $\mathrm{AR}$ properties imply their
equivariant counterparts.

We next consider \(\mathbb Z_2\)-actions induced by decreasing
involutions on topological lattices. We first prove that every fixed
point of such an involution on a modular lattice yields an explicit
equivariant \(2\)-mean. Furthermore, we introduce a lattice-theoretic
version of the Babylonian iteration used to approximate the geometric
mean and prove that, under suitable conditions on the order and its
interaction with the topology, this iteration produces an equivariant
\(2\)-mean whenever the lattice admits a \(2\)-mean compatible with the
order. Finally, we apply these results
to spaces of compact convex bodies and supercoercive convex functions,
where we prove the existence of equivariant \(2\)-means and show that
both spaces are \(\mathbb Z_2\)-absolute retracts.

\end{abstract}

\maketitle 

\section{Introduction}

One of the longstanding open problems in equivariant topology is
Jaworowski's problem, which asks whether the property of being a
\(G\)-absolute retract can be characterized in terms of fixed-point
sets. Indeed, if \(G\) is a compact Lie group and \(X\) is a metrizable
\(G\)-$\mathrm{AR}$, then \(X^{H}\) is an $\mathrm{AR}$ for every closed subgroup \(H\) of
\(G\); this necessary condition follows from a result of Smirnov
\cite{smirnov}. In his work on equivariant extensions and $G$-retracts
\cite{jaworowski1973,jaworowski1976,jaworowski1980}, Jaworowski asked
whether this necessary condition is also sufficient when the action has
only finitely many orbit types.

\begin{problem}[Jaworowski's problem]
\label{problema de Jaworowski}
Let \(G\) be a compact Lie group and let \(X\) be a metrizable
\(G\)-space with finitely many orbit types. Suppose that \(X^{H}\) is
an $\mathrm{AR}$ for every closed subgroup \(H\) of \(G\). Must \(X\) be a
\(G\)-$\mathrm{AR}$?
\end{problem}

Several partial results have been obtained under additional assumptions
(see, for instance, \cite{Antonyan2005,AntonyanEtAl2017}; for a recent
discussion of the problem, see also \cite{sergey2022}). Nevertheless,
Jaworowski's problem remains open even when $G$ is finite\footnote{When $G$ is finite, $X$ automatically has only finitely many orbit
types.}. More specifically, the problem remains open even for
the simplest nontrivial group, \(G=\mathbb Z_2\).

A particularly significant special case arises when
\(G=\mathbb Z_2\), \(X\) is homeomorphic to the Hilbert cube
\[
Q=\prod_{n=1}^{\infty}[-1,1],
\]
and \(X^{\mathbb Z_2}\) consists of a single point. In this setting,
the action is determined by an involution with a unique fixed point,
and Jaworowski's problem is equivalent to another classical open
problem in infinite-dimensional topology, posed by R.~D. Anderson in the mid-sixties
(\cite[Problem~930]{WestOpenProblems}).

\begin{problem}[Anderson's problem ]
\label{problema de Anderson}
Let
\[
\sigma:Q\longrightarrow Q,\qquad
\sigma\big((x_n)_{n=1}^{\infty}\big)=(-x_n)_{n=1}^{\infty},
\]
be the standard involution on the Hilbert cube. If
\(\alpha:Q\rightarrow Q\) is an involution with a unique fixed point,
must \(\alpha\) be conjugate to \(\sigma\)? Equivalently, does there
exist a homeomorphism \(h:Q\rightarrow Q\) such that
\[
h\circ\alpha=\sigma\circ h?
\]
\end{problem}

The equivalence between Anderson's problem and this particular case of
Jaworowski's problem is explained in
\cite[Section~3.2]{sergey2022}. Despite several partial results and
related investigations
(\cite{sergey1999,west,WestWong,wong}), Anderson's problem remains open.

Another approach to Anderson's problem is to consider involutions that
are compatible with an additional order structure. Notice that the Hilbert cube
$Q$ carries a natural topological lattice structure induced by the
coordinatewise order
\[
x=(x_n)_{n=1}^{\infty}\leq y=(y_n)_{n=1}^{\infty}
\quad\Longleftrightarrow\quad
x_n\leq y_n \text{ for every } n\in\mathbb N.
\]
The corresponding meet and join operations are defined coordinatewise
by
\[
x\wedge y=(\min\{x_n,y_n\})_{n=1}^{\infty}
\quad\text{and}\quad
x\vee y=(\max\{x_n,y_n\})_{n=1}^{\infty}.
\]
Moreover, the standard involution $\sigma(x)=-x$ is decreasing with
respect to this order: if $x\leq y$, then
$\sigma(y)\leq\sigma(x)$. This observation naturally suggests the
following weaker version of Anderson's problem.

\begin{problem}[An order-theoretic version of Anderson's problem]
\label{order-theoretic-Anderson-problem}
Let $\alpha:Q\longrightarrow Q$ be an involution with a unique fixed
point. Suppose that $Q$ admits a lattice structure, with associated
partial order $\preceq$, such that $\alpha$ is decreasing with respect
to $\preceq$. Must $\alpha$ be conjugate to the standard involution
$\sigma$?
\end{problem}

A particular instance of this question was answered affirmatively in
\cite{HiguerasJonard}. Let $\mathcal K_0^n$ denote the family of all
closed convex subsets of $\mathbb R^n$ containing the origin, endowed
with the Attouch--Wets metric and ordered by inclusion. It was proved
that $(\mathcal K_0^n,d_{AW})$ is homeomorphic to the Hilbert cube and
that the polar involution is conjugate to $\sigma$. Moreover, every
decreasing involution on $\mathcal K_0^n$ having a unique fixed point
is conjugate to the polar involution and, consequently, to the standard
involution on $Q$ (\cite[Corollary~3]{HiguerasJonard}).

A key ingredient in the proof of this corollary is a representation
theorem due to B.~A. S{\l}omka, which states that every decreasing
involution
$f:\mathcal K_0^n\longrightarrow\mathcal K_0^n$ is of the form
\[
f(A)=T(A^\circ),
\]
where $T:\mathbb R^n\longrightarrow\mathbb R^n$ is a suitable
symmetric linear isomorphism \cite[Corollary~4]{Slomka}.
S{\l}omka's representation theorem is not an isolated result.
Artstein-Avidan and Milman obtained similar classification results for
several natural classes of functions
(\cite{ArtsteinAvidanMilmanDuality,
ArtsteinAvidanMilmanHidden}). Most notably, they proved that every
order-reversing involution on the class of lower semicontinuous convex
functions on $\mathbb R^n$ is, up to linear terms, the
Legendre--Fenchel transform (\cite{ArtsteinAvidanMilman}). These results
show that order-reversing involutions often exhibit a strong rigidity:
up to relatively simple modifications, they are determined by
canonical duality transforms.

Inspired by these results, the aim of this paper is to investigate a
particular case of Jaworowski's problem in which
$G=\mathbb Z_2$, the underlying space $X$ is a topological lattice,
and the action is generated by an involution that is decreasing with
respect to the lattice order. Our strategy is to use the topological
lattice structure of $X$ to study the existence of equivariant
$n$-means.

Recall that an $n$-mean on a topological space $X$ is a continuous map
$p:X^n\longrightarrow X$ which is invariant under permutations of its
arguments and, in addition, satisfies
\[
p(x,\ldots,x)=x
\]
for every $x\in X$. If $X$ is a $G$-space, the $n$-mean is
said to be equivariant if
\[
p(gx_1,\ldots,gx_n)=g\,p(x_1,\ldots,x_n)
\]
for every $g\in G$ and every $(x_1,\ldots,x_n)\in X^n$ (see
Definition~\ref{def:n-mean} for the formal definition).

The reason for considering equivariant $n$-means lies in their close
relationship with absolute retract properties. In the
non-equivariant setting, for suitable classes of connected $\mathrm{ANR}$-spaces,
the existence of an $n$-mean for some $n\geq 2$ is equivalent to the
space being an $\mathrm{AR}$ (see, for instance,
\cite[Theorem~2.2]{JuarezAnguiano2024}). This connection goes back to
the work of Eckmann and Eckmann, Ganea and Hilton (\cite{Eckmann1954, EckmannGaneaHilton}; see also
\cite{Eckmann2004,ChichilniskyHeal,Weinberger}).
Equivariant versions of these results were obtained by
Juárez-Anguiano  in \cite{JuarezAnguiano2020} and \cite{JuarezAnguiano2024}.
More recently, in \cite{JonardLopezPoo}, we obtained further
conditions under which the existence of a suitable equivariant $n$-mean
implies that a $G$-space is a $G$-$\mathrm{AR}$.

Continuing this line of research, in Section~3 we strengthen some
previous results concerning equivariant $n$-means. In particular, we
prove that if $X$ is a metrizable $\mathbb Z_2$-space whose
underlying space is an $\mathrm{AR}$, then the existence of an equivariant
$2$-mean on $X$ implies that $X$ is a $\mathbb Z_2$-$\mathrm{AR}$
(Corollary~\ref{c:equivariant mean implies Z2AR}).

With this result in mind, and in order to study Jaworowski's problem
for $\mathbb Z_2$-decreasing topological lattices, we explore two
settings in which an equivariant mean can be constructed. The first
arises when the underlying lattice is modular and the decreasing
involution has a fixed point. The second is based on a generalization
of the Babylonian method for constructing the geometric mean.

Recall that, for positive real numbers $x$ and $y$, this method can be
described by setting $a_0=x$, $h_0=y$, and defining
\[
a_{k+1}=\frac{a_k+h_k}{2},
\qquad
h_{k+1}
=\frac{2}{\dfrac{1}{a_k}+\dfrac{1}{h_k}}
=\left(
\frac{a_k^{-1}+h_k^{-1}}{2}
\right)^{-1}.
\]

The sequence $(a_k)_{k\geq 1}$ is decreasing, the sequence
$(h_k)_{k\geq 1}$ is increasing, and both converge to the geometric
mean $\sqrt{xy}$. Moreover, $a_kh_k=xy$ for every $k\geq 0$, and hence
\[
a_{k+1}
=\frac{1}{2}\left(a_k+\frac{xy}{a_k}\right).
\]
Thus, the sequence $(a_k)$ is generated by the classical Babylonian
method for approximating $\sqrt{xy}$ (see, for instance,
\cite{BartleSherbert}).

Notice also that the second recurrence is obtained from the first by
applying the multiplicative inverse to the variables and then to the
resulting arithmetic mean. This observation makes it possible to
replace the multiplicative inverse by a decreasing involution in a
more general setting.

V. Milman and L. Rotem adapted this construction to the space
$\mathcal K_{(0),b}^n$ of compact convex bodies in $\mathbb R^n$
containing the origin in their interior. More precisely, they replaced
the multiplicative inverse by the polar involution and the usual
arithmetic mean by the Minkowski arithmetic mean
\[
A(K,L)=\frac{K+L}{2}.
\]
The corresponding harmonic mean is then given by
\[
H(K,L)
=
\left(
\frac{K^\circ+L^\circ}{2}
\right)^\circ.
\]
Using these two operations, in \cite{MilmanRotem} (see also \cite{RotemAlgebraic}), they constructed a geometric mean on
$\mathcal K_{(0),b}^n$ .

Motivated by the classical Babylonian method and by the construction
of Milman and Rotem, in Section~4.2 we generalize this iterative
procedure to topological lattices equipped with a decreasing
involution. We give sufficient conditions under which the two
resulting sequences converge to a common limit. This limit defines a
generalized geometric mean which is, by construction, an equivariant
$2$-mean.

We conclude the paper with two applications that may be of interest in
convex analysis. In Section~5.1, we consider the space
$\mathcal K_{(0),b}^n$ of all compact convex bodies in $\mathbb R^n$
containing the origin in their interior, equipped with the Hausdorff
metric. We prove that, for every involution
$\alpha:\mathcal K_{(0),b}^n\longrightarrow\mathcal K_{(0),b}^n$
that is decreasing with respect to the order given by the inclusion, the $\mathbb Z_2$-space
$(\mathcal K_{(0),b}^n,\alpha)$ is a $\mathbb Z_2$-$\mathrm{AR}$.

In Section~5.2, we study the space of all convex supercoercive functions
$f:\mathbb R^n\longrightarrow\mathbb R$, that is, convex functions
satisfying
\[
\lim_{\|x\|\to\infty}\frac{f(x)}{\|x\|}=+\infty.
\]
We equip this space with the topology of epi-convergence, which in this
setting coincides with the compact-open topology. We prove that it
admits an equivariant mean with respect to Legendre--Fenchel
conjugation and, as a consequence, that the resulting
$\mathbb Z_2$-space is a $\mathbb Z_2$-$\mathrm{AR}$.

The paper is organized as follows. In Section~2, we introduce the
basic notions and preliminary results concerning group actions,
equivariant retracts, $n$-means, and topological lattices. In
Section~3, we strengthen some previous results relating equivariant
$n$-means to the $G$-$\mathrm{AR}$ property. Section~4 is devoted to decreasing
involutions on topological lattices and to the construction of
equivariant means in this setting. Finally, Section~5 contains the two
applications to convex analysis described above.

\section{Preliminaries}

Throughout the paper, all spaces are assumed to be Hausdorff, and we
use the term \textit{map} to mean a continuous function.

 We refer the reader to \cite{Bredon} for basic notions of the theory of G-spaces. However, we recall some special definitions and results that will be used throughout this text.

An \textit{action} of a topological group $G$ on a topological space $X$ is a map $\alpha:G\times X\rightarrow X$ such that $\alpha\left(e,x\right)=x$ and $\alpha\left(g,\alpha\left(h,x\right)\right)=\alpha\left(gh,x\right)$, where $g,h\in G$, $x\in X$, and $e$ is the identity element of $G$. To simplify the notation, we write $gx$ instead of $\alpha\left(g,x\right)$. A topological space $X$ equipped with an action of a Hausdorff topological group $G$ is called a \textit{$G$-space}.

Let $X$ be a $G$-space. If $A$ is a subset of $X$, the \textit{saturation} of $A$, denoted by $G\left(A\right)$, is defined as the set $\left\{ga \mid g\in G, \mbox{ } a\in A\right\}$. If $G\left(A\right)=A$, we say that $A$ is \textit{invariant}. If $A=\left\{x\right\}$ for some $x\in X$, we simply write $G\left(x\right)$ instead of $G\left(\left\{x\right\}\right)$. In this case, we say that $G\left(x\right)$ is the \textit{orbit} of $x$.

If $x\in G$, we denote by $G_{x}$ the \textit{stabilizer} of $x$, defined by $G_{x}:=\left\{g\in G \mid gx=x\right\}$. Given a subgroup $H$ of $G$, we denote by $X^{H}$ the set of points of $X$ that are fixed by $H$. Namely,  
\[
X^{H}:=\left\{x\in X \mid hx=x \mbox{ for each } h\in H\right\}.
\]
If $x\in X^{H}$, we say that $x$ is a \textit{$H$-fixed point}. It is well known that, for every subgroup $H\leq G$, the set $X^H$ is always closed. 

The family of all subgroups of $G$ that are conjugated with $H$ will be denoted by $(H)$. That is, $(H)=\left\{gHg^{-1} \mid g\in G\right\}$. For every $x\in X$, $(G_{x})$ will be called the \textit{orbit type} of the orbit $G\left(x\right)$. If $(H)$ is the orbit type of some orbit of $X$, then $(H)$ will be called an \textit{orbit type} of $X$.

A function $f:X\rightarrow Y$ between $G$-spaces is called $G$-\textit{equivariant} (or simply \textit{equivariant}) if $f\left(gx\right)=gf\left(x\right)$ for every $g\in G$, $x\in X$. If in addition $f$ is continuous, then we say that $f$ is a $G$-\textit{map}.

A $G$-space $Y$ is called a \textit{$G$-absolute neighborhood extensor} (denoted by $G$-$\mathrm{ANE}$) if for every closed and invariant subset $A$ of a metrizable $G$-space $X$ and every $G$-map $f:A\rightarrow Y$, there exist an invariant neighborhood $U$ of $A$ in $X$ and a $G$-map $F:U\rightarrow Y$ such that $F\restriction_{A}=f$. If we can always take $U=X$, then we say that $Y$ is a \textit{$G$-absolute extensor} (denoted by $G$-$\mathrm{AE}$).

A metrizable $G$-space $Y$ is a \textit{$G$-absolute neighborhood retract} (denoted by $G$-$\mathrm{ANR}$) if, for every metrizable $G$-space $X$ that contains $Y$ as a closed and invariant subset, there exists an invariant neighborhood $U$ of $Y$ in $X$ and an equivariant retraction $r:U\rightarrow Y$. If we can always take $U=X$,  we say that $Y$ is a \textit{$G$-absolute retract} (denoted by $G$-$\mathrm{AR}$).

If we take $G=\left\{e\right\}$ in the previous definitions, we obtain the classical notions of \textit{absolute neighborhood extensor} ($\mathrm{ANE}$), \textit{absolute extensor} ($\mathrm{AE}$), 
\textit{absolute neighborhood retract} ($\mathrm{ANR}$) and \textit{absolute retract} ($\mathrm{AR}$).

We will  use the following characterization, proved in \cite[Theorem 14]{sergey1987}.
\begin{theorem}[\cite{sergey1987}, Theorem 14]\label{t:G-ANE-G-ANR}
Let $X$ be a metrizable $G$-space. Then:
\begin{enumerate}[label=\upshape(\arabic*)]
    \item $X$ is a $G$-$\mathrm{ANE}$ if and only if it is a $G$-$\mathrm{ANR}$;
    \item $X$ is a $G$-$\mathrm{AE}$ if and only if it is a $G$-$\mathrm{AR}$.
\end{enumerate}
\end{theorem}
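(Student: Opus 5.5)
The plan is to split each equivalence into an easy implication and a hard one. I would work in the setting where this theorem is used, namely $G$ a compact group; this is also what makes the equivariant averaging below available.

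\emph{Extensor implies retract.} Suppose $X$ is a $G$-$\mathrm{ANE}$ and $X$ sits as a closed invariant subset of a metrizable $G$-space $Z$. The identity $\mathrm{id}\colon X\to X$ is a $G$-map defined on a closed invariant subset of $Z$. The extension property gives an invariant neighborhood $U$ of $X$ in $Z$ and a $G$-map $r\colon U\to X$ with $r|_X=\mathrm{id}$, which is an equivariant retraction. So $X$ is a $G$-$\mathrm{ANR}$. The same argument with $U=Z$ shows that a $G$-$\mathrm{AE}$ is a $G$-$\mathrm{AR}$.

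\emph{Retract implies extensor.} The strategy is to reduce to a model space that is obviously an extensor. I would carry out three steps.

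\begin{enumerate}[label=\upshape(\roman*)]
\item \emph{Equivariant embedding.} Embed $X$ equivariantly as a closed invariant subset of a convex subset $C$ of a normed linear $G$-space $L$. Here $G$ acts on $L$ by linear isometries, continuously. The natural model is the Kuratowski--Wojdys{\l}awski embedding.
\begin{itemize}
\item First replace the metric on $X$ by an invariant one, $d'(x,y)=\sup_{g\in G} d(gx,gy)$, using compactness of $G$.
\item Then map $x\mapsto d'(x,\cdot)-d'(x_0,\cdot)$ into the space of bounded continuous functions on $X$ with the sup norm. Here $G$ acts by translation, $(gf)(y)=f(g^{-1}y)$.
\item The base point $x_0$ breaks equivariance. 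To repair this I would use an invariant normalization: either the Arens--Eells-type space of functions modulo constants, or average over Haar measure on $G$.
\item Continuity of the $G$-action on the image, or on the closed linear span, must be checked. If the translation action on all of $C_b(X)$ is not continuous, I would restrict to the subspace of $G$-continuous vectors.
\end{itemize}

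\item \emph{Equivariant Dugundji extension.} Prove that every convex invariant subset $C$ of such a normed $G$-space is a $G$-$\mathrm{AE}$. Let $f\colon A\to C$ be a $G$-map from a closed invariant subset $A$ of a metrizable $G$-space $Z$.
\begin{itemize}
\item Dugundji's theorem gives a continuous, non-equivariant extension $F\colon Z\to C$.
\item Average it: $\tilde F(z)=\int_G g F(g^{-1}z)\,dg$, a Bochner integral with respect to normalized Haar measure.
\item Convexity and closedness of the relevant hull keep $\tilde F$ in $C$, or in a suitable completion of $C$.
\item Equivariance of $f$ gives $\tilde F|_A=f$, and invariance of Haar measure gives equivariance of $\tilde F$.
\end{itemize}
The delicate points here are the convergence and continuity of the integral, and staying inside $C$ when $C$ is not complete.

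\item \emph{Transfer.} Combine the two previous steps.
\begin{itemize}
\item If $X$ is a $G$-$\mathrm{ANR}$, there is an invariant neighborhood $W$ of $X$ in $C$ and an equivariant retraction $r\colon W\to X$.
\item Given a $G$-map $f\colon A\to X$, step (ii) extends it to a $G$-map $F\colon Z\to C$.
\item Then $U=F^{-1}(W)$ is an invariant neighborhood of $A$, and $r\circ F|_U$ is the required extension. So $X$ is a $G$-$\mathrm{ANE}$.
\item If $X$ is a $G$-$\mathrm{AR}$, take $W=C$ and $U=Z$, so $X$ is a $G$-$\mathrm{AE}$.
\end{itemize}
\end{enumerate}

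The main obstacle is step (i): building an equivariant closed embedding into a convex subset of a linear $G$-space on which the action is genuinely continuous. This is where the hypotheses on $G$ and the metrizability of $X$ are used in an essential way.
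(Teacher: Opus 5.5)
The paper does not prove this statement; it quotes it from Antonyan. Your overall plan is the standard one. The easy implications (extend the identity) are correct as written, and so is step (iii) once you have a metrizable $G$-$\mathrm{AE}$ containing $X$ as a closed invariant subset. Restricting to compact $G$ is harmless: that is the setting of the cited result, and the paper only applies it to finite $G$.

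\textbf{The gap is step (ii), which is false for infinite compact $G$.} So the model space produced in step (i) need not be a $G$-$\mathrm{AE}$. A $G$-$\mathrm{AE}$ must contain a $G$-fixed point. To see this, extend $(g,1)\mapsto g\,u_0$ equivariantly from the base $G\times\{1\}$ of the cone over $G$: the fixed vertex must go to a fixed point.

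Now take $G=X=S^1$ with the chordal metric, which is invariant and bounded. Your Kuratowski--Wojdys{\l}awski map gives $h(X)=Gu_0\subset C(S^1)$, where $u_0(t)=2\lvert\sin(t/2)\rvert$ and $\widehat{u_0}(k)=\frac{2}{\pi(1-4k^2)}\neq 0$ for every $k$. Suppose some $\sum_i\lambda_i u_0(\cdot-g_i)$ in $\operatorname{conv}(h(X))$ were constant. Then the atomic measure $\sum_i\lambda_i\delta_{g_i}$ would have all nonzero Fourier coefficients equal to zero, so it would be Haar measure, which is impossible. Hence $C^G=\emptyset$, and $C$ is not a $G$-$\mathrm{AE}$: the Haar average $\int_G gF(g^{-1}z)\,dg$ genuinely leaves the algebraic hull.

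Passing to the closed convex hull in the completion $\widehat L$ does repair (ii). However, $h(X)$ is then closed in the model only when $(X,d')$ is complete. Nothing forces this: already an $\mathrm{AR}$ need not be completely metrizable, e.g.\ the linear span of the standard basis of $\ell_2$, which is of first category in itself. So, as proposed, (i) and (ii) cannot both be met.

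\textbf{A clean repair is the induced space}, which avoids Haar integrals and the continuity of translations altogether.
\begin{enumerate}[label=\upshape(\alph*)]
\item Let $k\colon X\to Y$ be a non-equivariant Wojdys{\l}awski closed embedding into a convex subset $Y$ of a normed space; $Y$ is an $\mathrm{AE}$ by Dugundji.
\item Give $C(G,Y)$ the sup metric and the action $(g\varphi)(t)=\varphi(tg)$. Because $G$ is compact, this action is continuous and $C(G,Y)$ is a metrizable $G$-space.
\item $C(G,Y)$ is a $G$-$\mathrm{AE}$. Given a $G$-map $f\colon A\to C(G,Y)$, extend $a\mapsto f(a)(e)$ to a map $\Phi\colon Z\to Y$ and put $F(z)(t)=\Phi(tz)$. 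Then $F(a)(t)=f(ta)(e)=f(a)(t)$ and $F(gz)=gF(z)$.
\item The map $j(x)(t)=k(tx)$ is an equivariant closed embedding of $X$. Indeed, $j(x_n)\to\varphi$ forces $k(x_n)\to\varphi(e)\in k(X)$.
\end{enumerate}
Using $C(G,Y)$ in your step (iii) completes both hard implications.

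For finite $G$, the only case the paper needs, your average is a finite convex combination, so step (ii) holds for every invariant convex $C$. In that case your argument goes through, provided you use a bounded invariant metric: then $x\mapsto d'(x,\cdot)$ is already equivariant and no base point is needed.
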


We are particularly interested in the case where the acting group is $\mathbb Z_2:=\{-1,1\}$. An \textit{involution} on a space $X$ is a continuous map $\alpha:X\rightarrow X$ satisfying $\alpha\circ\alpha=1_X$. Every action of $\mathbb Z_2$ on $X$ induces an involution $\alpha$, defined by $\alpha(x)=-1\cdot x$. Conversely, every involution on $X$ induces an action of $\mathbb Z_2$. We will denote the corresponding $\mathbb Z_2$-space by $(X,\alpha)$.

\begin{definition}\label{def:n-mean}
An \textit{$n$-mean} on a topological space $X$ is a continuous map
$p:X^{n}\rightarrow X$
satisfying the following conditions:
\begin{enumerate}
\item[(M1)] $p(x,\ldots,x)=x$ for every $x\in X$;
\item[(M2)] $p(x_{1},\ldots,x_{n})
=p(x_{\sigma(1)},\ldots,x_{\sigma(n)})$
for every $(x_{1},\ldots,x_{n})\in X^{n}$ and every permutation
$\sigma$ of $\{1,\ldots,n\}$.
\end{enumerate}

When $n=2$, we simply call $p$ a \textit{mean}.

If $X$ is a $G$-space and, in addition,
\[
p(gx_{1},\ldots,gx_{n})=g\,p(x_{1},\ldots,x_{n})
\]
for every $(x_{1},\ldots,x_{n})\in X^{n}$ and every $g\in G$, then $p$
is called a \textit{$G$-$n$-mean}, or simply an \textit{equivariant
$n$-mean} whenever the acting group is clear from the context.

When $n=2$, we simply speak of an \textit{equivariant mean}, or of a
\textit{$G$-mean} whenever it is convenient to specify the acting group.
\end{definition}

The following lemma was proved in \cite[Lemma 3.1]{JonardLopezPoo}.

\begin{lemma} \label{l: |G| divides n}
 If $p:X^{n}\rightarrow X$ is an $n$-mean and $k\in \mathbb{N}$ divides $n$,   then there exists a $k$-mean $q:X^{k}\rightarrow X$. If in addition $p$ is an equivariant $n$-mean, then $q$ can be chosen to be equivariant too. 
\end{lemma}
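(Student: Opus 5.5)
Write $n=km$ and define $q\colon X^{k}\to X$ by repeating each argument $m$ times:
\[
q(x_1,\ldots,x_k)=p(\underbrace{x_1,\ldots,x_1}_{m},\underbrace{x_2,\ldots,x_2}_{m},\ldots,\underbrace{x_k,\ldots,x_k}_{m}).
\]
Equivalently, $q=p\circ\Delta$, where $\Delta\colon X^k\to X^n$ is the continuous diagonal-type map $(x_1,\ldots,x_k)\mapsto(x_{\lceil j/m\rceil})_{j=1}^{n}$. Continuity of $q$ is then immediate, and the plan is to check (M1), (M2) and equivariance one at a time.

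For (M1), note that $q(x,\ldots,x)=p(x,\ldots,x)=x$. For (M2), take a permutation $\tau$ of $\{1,\ldots,k\}$. The tuple $\Delta(x_{\tau(1)},\ldots,x_{\tau(k)})$ is obtained from $\Delta(x_1,\ldots,x_k)$ by a block permutation $\hat\tau$ of $\{1,\ldots,n\}$: it sends the $i$-th block of length $m$ to the block indexed by $\tau^{-1}(i)$ and preserves the order inside each block. More informally, both $n$-tuples contain exactly the same entries with the same multiplicities, namely each $x_i$ exactly $m$ times, so they differ only by a permutation of coordinates. Since $p$ satisfies (M2), it takes the same value on both, and therefore $q$ is symmetric.

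For equivariance, assume $p$ is a $G$-$n$-mean. Then
\[
q(gx_1,\ldots,gx_k)=p(gx_1,\ldots,gx_1,\ldots,gx_k,\ldots,gx_k)=g\,p(x_1,\ldots,x_1,\ldots,x_k,\ldots,x_k)=g\,q(x_1,\ldots,x_k).
\]

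None of these steps is a real obstacle. The only point that needs care is writing out the permutation $\hat\tau$ in (M2) explicitly, and this is routine bookkeeping.
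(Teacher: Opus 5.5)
Your proof is correct. Composing $p$ with the map $\Delta$ that repeats each argument $n/k$ times gives continuity, (M1), (M2) via the block permutation $\hat\tau$ (your description of where the blocks go is right), and equivariance directly from that of $p$. The paper does not reprove this lemma; it quotes it from \cite[Lemma~3.1]{JonardLopezPoo}, and your block-repetition construction is the standard argument for it.
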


Combining \cite[Theorems 3.5 and 3.6]{JonardLopezPoo}, we obtain the following theorem.

\begin{theorem}\label{t: combinacion de teoremas k-mean implica G-AR}
Let $X$ be a metrizable $G$-space, where $G$ is a finite group.
Assume that $X$ admits an equivariant $m$-mean for some multiple $m$ of $|G|$. If either of the following conditions holds, then $X$ is a $G$-$\mathrm{AR}$.
\begin{enumerate}[label=\upshape(\arabic*)]
    \item $X^{H}$ is an $\mathrm{AR}$ for every subgroup $H\leq G$.
    \item $X$ is a connected $G$-$\mathrm{ANR}$ whose homology groups are finitely generated and vanish in all but finitely many dimensions.
\end{enumerate}
\end{theorem}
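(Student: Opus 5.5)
The statement is obtained by combining the two cited results. I treat Theorem~\ref{t: combinacion de teoremas k-mean implica G-AR} as given from \cite[Theorems 3.5 and 3.6]{JonardLopezPoo}. The work is to check that the hypotheses match and that the conclusions can be combined.

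\textbf{Step 1: reduce to an equivariant $|G|$-mean.} I first check whether the cited theorems are stated for an equivariant $|G|$-mean rather than for an arbitrary multiple $m$ of $|G|$. If they are, I apply Lemma~\ref{l: |G| divides n} with $k=|G|$, which divides $m$. This turns the given equivariant $m$-mean into an equivariant $|G|$-mean $q\colon X^{|G|}\to X$. From then on I work with $q$.

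\textbf{Step 2: case (1).} Assume $X^H$ is an $\mathrm{AR}$ for every subgroup $H\le G$. I invoke \cite[Theorem 3.5]{JonardLopezPoo} with the mean $q$ to conclude that $X$ is a $G$-$\mathrm{AR}$. If that theorem is phrased in extensor language, Theorem~\ref{t:G-ANE-G-ANR}(2) converts $G$-$\mathrm{AE}$ into $G$-$\mathrm{AR}$. This uses that $X$ is metrizable, which is part of the hypothesis.

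\textbf{Step 3: case (2).} Assume $X$ is a connected $G$-$\mathrm{ANR}$ with finitely generated homology vanishing in almost all dimensions. I invoke \cite[Theorem 3.6]{JonardLopezPoo}. Again I use Theorem~\ref{t:G-ANE-G-ANR} to pass between the $G$-$\mathrm{ANE}$/$G$-$\mathrm{ANR}$ and $G$-$\mathrm{AE}$/$G$-$\mathrm{AR}$ formulations where needed.

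The only real obstacle is bookkeeping. The cited theorems may state their mean hypothesis differently, for example with $n=|G|$ exactly or with $n$ coprime to something. Step~1 is what absorbs that difference, since Lemma~\ref{l: |G| divides n} preserves equivariance. If instead the cited results already allow any multiple of $|G|$, Step~1 is unnecessary and the proof is a direct citation.
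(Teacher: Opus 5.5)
Your proposal takes the same route as the paper, which gives no argument beyond citing \cite[Theorems 3.5 and 3.6]{JonardLopezPoo}, and it is correct. Your extra bookkeeping is valid but only adjusts that citation: reducing to an equivariant $|G|$-mean via Lemma~\ref{l: |G| divides n} works because $|G|$ divides $m$ and the lemma preserves equivariance, and passing between extensor and retract formulations via Theorem~\ref{t:G-ANE-G-ANR} works because $X$ is metrizable.
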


Let $\left(X,\leq\right)$ be a partially ordered set. If $x,y\in X$, we write $x<y$ when $x\leq y$ and $x\neq y$. If $x\leq y$, we denote the set $\left\{a\in X \mid x\leq a\leq y\right\}$ by $\left[x;y\right]$.

We say that a partially ordered set $\left(X,\leq\right)$ is a \textit{lower semilattice} if for every $x,y\in X$ there exists a greatest lower bound (denoted by $x\wedge y$) of the set $\{x,y\}$. Similarly, a partially ordered set $\left(X,\leq\right)$ is an \textit{upper semilattice} if for every $x,y\in X$ there exists a least upper bound (denoted by $x\vee y$) of the set $\{x,y\}$.

Notice that every lower semilattice $\left(X,\leq\right)$ becomes an upper semilattice when equipped with the partial order $\leq^{\prime}$, defined by
\[
x\leq^{\prime}y \quad\text{if and only if}\quad y\leq x.
\]
Conversely, every upper semilattice becomes a lower semilattice in the same way. Therefore, we will sometimes simply refer to $\left(X,\leq\right)$ as a \textit{semilattice} when it is either a lower semilattice or an upper semilattice.

A partially ordered set $\left(X,\leq\right)$ is called a \textit{lattice} if it is both a lower semilattice and an upper semilattice. We denote this structure by $\left(X,\leq,\wedge,\vee\right)$.

A lattice $\left(X,\leq,\wedge,\vee\right)$ is  said to be
\begin{itemize}
    \item \textit{complete} if every subset of $X$ has a greatest lower bound (infimum) and a least upper bound (supremum). 
    \item $\sigma$-\textit{monotonically conditionally complete} if every monotonically increasing sequence that is bounded from above has a supremum in \(X\), and every monotonically decreasing sequence that is bounded from below has an infimum in \(X\).
\end{itemize}

Let $\left(X, \leq, \wedge, \vee\right)$ be a lattice. We say that $\left(X, \leq, \wedge, \vee\right)$ is \textit{modular} if it satisfies that $$x\vee\left(a\wedge b\right)=\left(x\vee a\right)\wedge b$$ for any $x,a,b\in X$ such that $x\leq b$.

We say that $\left(X, \leq, \wedge, \vee\right)$ is \textit{distributive} if it satisfies that $$x\wedge \left(y\vee z\right)=\left(x\wedge y\right)\vee \left(x\wedge z\right)$$ for any $x,y,z\in X$. This equality is equivalent to  $$x\vee\left(y\wedge z\right)=\left(x\vee y\right)\wedge \left(x\vee z\right)$$ for any $x,y,z\in X$.

It is well known that any distributive lattice is modular (see, e.g.,
\cite[Chapter~I, Section~7]{Birkhoff}). 

Let $X$ be a Hausdorff topological space with a lattice structure $\left(X, \leq, \wedge, \vee\right)$. We say that $\left(X, \leq, \wedge, \vee\right)$ is a \textit{topological lattice} if the functions $\wedge:X\times X\rightarrow X$ and $\vee:X\times X\rightarrow X$ are continuous. Similarly, if $X$ is a topological Hausdorff space with a semilattice structure $\left(X, \leq, \wedge\right)$ (or $\left(X, \leq, \vee\right)$), we say it is a \textit{topological semilattice} if $\wedge:X\times X\rightarrow X$ is continuous (or $\vee:X\times X\rightarrow X$ is continuous).

Following \cite{BorweinThera}, a topological lattice
$(X,\leq,\wedge,\vee)$ is called a \textit{Dini lattice} if every
monotonically increasing net in $X$ that has a supremum converges to its
supremum. If the same condition is required only for monotonically
increasing sequences, we say that $(X,\leq,\wedge,\vee)$ is
\textit{countably Dini}.

Let $(X,\leq,\wedge,\vee)$ be a topological lattice. A subset $A$ of
$X$ is called \textit{convex with respect to $\leq$} if
$[x;y]\subseteq A$ whenever $x,y\in A$ and $x\leq y$. We say that $X$
is \textit{locally convex with respect to $\leq$} if its topology has a
basis consisting of convex sets.

The next lemma gives two situations in which a topological lattice is
both Dini and locally convex.

\begin{lemma}\label{l:locally convex}
Let $(X,\leq,\wedge,\vee)$ be a topological lattice. If $X$ is either
compact or locally compact and connected, then $X$ is a Dini lattice
and is locally convex with respect to $\leq$.
\end{lemma}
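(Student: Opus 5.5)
This lemma collects classical facts from the theory of topological lattices, so the plan is to reduce to known results rather than prove everything from scratch.

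For the \emph{compact} case, Dini follows from compactness plus continuity of the lattice operations. Let $(x_\lambda)$ be an increasing net with supremum $s$. By compactness the net has cluster points. Take any cluster point $c$. For a fixed index $\mu$ we have $x_\lambda\ge x_\mu$ for all $\lambda\ge\mu$, i.e.\ $x_\lambda\vee x_\mu=x_\lambda$. Passing to the limit along a subnet converging to $c$ and using continuity of $\vee$ gives $c\vee x_\mu=c$, so $c\ge x_\mu$. Hence $c$ is an upper bound and $c\ge s$. Conversely, $x_\lambda\le s$ means $x_\lambda\wedge s=x_\lambda$; in the limit $c\wedge s=c$, so $c\le s$. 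Thus every cluster point equals $s$, and in a compact Hausdorff space a net with a unique cluster point converges to it. Note that this argument uses only that the order relation is closed (equivalently, that $\wedge,\vee$ are continuous in a Hausdorff space).

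Local convexity in the compact case is the classical theorem of Lawson (and Choe) that compact topological lattices are locally convex. I would cite it rather than reprove it. A direct proof would go as follows. Given $x\in U$ open, use continuity of $\wedge$ and $\vee$ at $(x,x)$ and compactness to find a neighborhood $V$ such that the "interval hull" $\{z: a\wedge b\le z\le a\vee b,\ a,b\in V\}$ lies in $U$. This hull is convex, and one must show it is a neighborhood of $x$ by iterating this enlargement countably often. This iteration is the delicate part, which is why I would cite it.

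For the \emph{locally compact connected} case, the plan is to invoke the known theorem that a locally compact connected topological lattice is locally convex (Choe; Gierz--Lawson). For Dini, let $(x_\lambda)\uparrow s$. Choose a compact convex neighborhood $K$ of $s$. I want to show that eventually $x_\lambda\in K$; then the compact-case cluster-point argument run inside $K$ gives convergence. Here connectedness enters, and this is the main obstacle. The hope is that local convexity plus connectedness imply that every interval $[a;b]$ is compact and that the net eventually enters $K$, since $s$ is a limit of the upper bounds. Concretely, I would argue by contradiction: if a subnet stays outside $K$, then $x_\lambda\vee k\in K$ for $k$ near $s$ in $K$... this needs care. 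I therefore expect to rely on the literature here, citing the result that locally compact connected topological lattices have compact intervals. With $[x_{\lambda_0};s]$ compact and containing the tail of the net, the compact argument applies directly inside that interval.

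In summary, the only genuinely new verification is the cluster-point argument. The local convexity statements, and the compactness of intervals in the connected locally compact case, would be quoted from the topological lattice literature. That last citation is the step I expect to be the main obstacle if one tried to prove it from scratch.
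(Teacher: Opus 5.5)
Your compact case is sound. The cluster-point argument is a correct, self-contained proof of the Dini property there; the paper instead quotes Lawson's Theorem~10 for both cases. For local convexity you rely on classical results, as the paper does: it uses Nachbin in the compact case and Anderson's Theorem~1 in the locally compact connected case. One small inaccuracy: the parenthetical ``equivalently'' is wrong, since continuity of $\wedge$ and $\vee$ implies that the order is closed, but not conversely. This is harmless.

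The genuine gap is the Dini property in the locally compact connected case. There your argument rests on the claim that order intervals in a locally compact connected topological lattice are compact, and that claim is false. Let $g(t)=0$ for $t<1$ and $g(t)=2$ for $t\geq 1$, and put $X=\{(t,y)\in\mathbb R^2: y>g(t)\}$ with the coordinatewise order.
\begin{itemize}
\item Because $g$ is nondecreasing, $X$ is closed under coordinatewise $\min$ and $\max$.
\item Because $g$ is upper semicontinuous, $X$ is open in $\mathbb R^2$.
\item It is path-connected: move vertically to height $3$, then horizontally.
\end{itemize}
So $X$ is a locally compact connected topological lattice. Now take $x_k=(1-1/k,\,3-1/k)$, an increasing sequence in $X$ with supremum $s=(1,3)\in X$. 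The interval $[x_1;s]=X\cap([0,1]\times[2,3])$ contains the points $(1-1/k,2)$, which converge in $\mathbb R^2$ to $(1,2)\notin X$. Hence this interval is not compact.

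In this particular example later intervals $[x_{\lambda_0};s]$ happen to be compact, but you cannot get that in general without new input. Given local compactness and local convexity, the existence of some $\lambda_0$ with $[x_{\lambda_0};s]$ compact is equivalent to the net entering a relatively compact convex open neighborhood of $s$. That is essentially the Dini property for that net. (By convexity it would suffice that the net meet each convex open neighborhood of $s$ once, since $x_{\lambda_0}\leq x_\lambda\leq s$ then traps the tail.)

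This is exactly where connectedness must do real work. The chain $\{1-1/k:k\geq 1\}\cup\{2\}\subseteq\mathbb R$ is a locally compact topological lattice in which the increasing sequence $(1-1/k)$ has supremum $2$ but does not converge to it. Your proposal gives no argument for this step, and the result you planned to cite does not exist in that form. The paper closes the gap by appealing to Lawson's Theorem~10 together with the definition of Lawson's convex-order topology, which yields the Dini property in both the compact and the locally compact connected cases.
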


\begin{proof}
The Dini property follows from \cite[Theorem 10]{Lawson} and the
definition of the convex-order topology given in
\cite[p.~594, (3)]{Lawson}. 

On the other hand, local convexity follows from
\cite[Proposition 3 and Theorem 5]{Nachbin} in the compact case, and
from \cite[Theorem 1]{anderson} when $X$ is locally compact and
connected.
\end{proof}

In particular, every topological lattice that is either compact or
locally compact and connected is countably Dini.

\section{The $G$-AR property in spaces with equivariant means}

In this section, we strengthen the results stated in
Theorem~\ref{t: combinacion de teoremas k-mean implica G-AR} concerning
the $G$-$\mathrm{AR}$ property of spaces admitting equivariant means. We begin with
a useful extension result.

 \begin{lemma}\label{l: extension equivariante}
Let $G:=\{g_1,\dots,g_n\}$ be a finite group of order $n$, and let $X$ and $Z$ be $G$-spaces. Suppose that $A\subset Z$ is an invariant subset and that $f:A\to X$ is a $G$-map admitting a continuous extension $\widetilde f:Z\to X$. If $p:X^n\to X$ is an equivariant $n$-mean, then the map $F:Z\to X$ defined by
\[
F(z):=
p\bigl(
g_1^{-1}\widetilde f(g_1z),
\dots,
g_n^{-1}\widetilde f(g_nz)
\bigr),
\qquad z\in Z,
\]
is a  $G$-map extending $f$.
\end{lemma}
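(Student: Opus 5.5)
We will verify three properties of $F$ in turn: it is continuous, it extends $f$, and it is equivariant. The proof is a direct computation. The only substantive point is to see how the symmetry (M2) of $p$ absorbs the reindexing of the group elements, so that is where we focus.

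\emph{Continuity.} For each $i$, the map $z\mapsto g_i^{-1}\widetilde f(g_iz)$ is a composition of continuous maps. Indeed, each $g\in G$ acts on $Z$ and on $X$ by homeomorphisms, and $\widetilde f$ is continuous. Hence $z\mapsto \bigl(g_1^{-1}\widetilde f(g_1z),\dots,g_n^{-1}\widetilde f(g_nz)\bigr)$ is a continuous map $Z\to X^n$. Composing it with the continuous map $p$ shows that $F$ is continuous.

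\emph{Extension.} Let $a\in A$. Since $A$ is invariant, $g_ia\in A$ for every $i$. Hence $\widetilde f(g_ia)=f(g_ia)=g_if(a)$, by the equivariance of $f$. Each argument of $p$ is therefore $g_i^{-1}g_if(a)=f(a)$. By (M1), $F(a)=p(f(a),\dots,f(a))=f(a)$.

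\emph{Equivariance.} This is the key step. Fix $g\in G$ and $z\in Z$. Write $h_i:=g_ig$, so that the $i$-th argument of $F(gz)$ is $g_i^{-1}\widetilde f(g_igz)=g\,h_i^{-1}\widetilde f(h_iz)$, using $g_i^{-1}=g\,h_i^{-1}$. Right multiplication by $g$ is a bijection of $G$, so there is a permutation $\sigma$ of $\{1,\dots,n\}$ with $h_i=g_{\sigma(i)}$. We then compute
\[
F(gz)=p\bigl(g\,g_{\sigma(1)}^{-1}\widetilde f(g_{\sigma(1)}z),\dots,g\,g_{\sigma(n)}^{-1}\widetilde f(g_{\sigma(n)}z)\bigr).
\]
By the equivariance of $p$, this equals
\[
g\,p\bigl(g_{\sigma(1)}^{-1}\widetilde f(g_{\sigma(1)}z),\dots,g_{\sigma(n)}^{-1}\widetilde f(g_{\sigma(n)}z)\bigr).
\]
By the symmetry (M2) of $p$, the last expression equals $g\,F(z)$. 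The only care needed is the bookkeeping $g_i^{-1}=g(g_ig)^{-1}$ and recognizing that $i\mapsto$ the index of $g_ig$ is a permutation. This is the step where the hypothesis that $p$ has exactly $|G|$ arguments is used.
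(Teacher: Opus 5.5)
Your proposal is correct and follows essentially the same route as the paper: continuity by composition, the extension property via equivariance of $f$ and (M1), and equivariance by reindexing with the bijection $g_i\mapsto g_ig$ of $G$. Your rewriting $g_i^{-1}=g\,(g_ig)^{-1}$ followed by pulling $g$ out of $p$ is just the paper's $gg^{-1}$ computation read in the other direction.
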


\begin{proof}
The continuity of $F$ follows immediately from the continuity of the action and the continuity of $\widetilde f$ and $p$.

On the other hand, if $a\in A$, since $f$ is equivariant and $\widetilde f$ is an extension of $f$, we have $\widetilde f(g_i a)=g_i f(a)$ for every $i\in\{1,\dots,n\}$. Thus, by property (M1),
\begin{align*}
F(a)
&=p\bigl(
g_1^{-1}\widetilde f(g_1a),
\dots,
g_n^{-1}\widetilde f(g_na)
\bigr)\\
&=p\bigl(
g_1^{-1}g_1f(a),
\dots,
g_n^{-1}g_nf(a)
\bigr)\\
&=p\bigl(f(a),\dots,f(a)\bigr)
=f(a),
\end{align*}
which proves that $F$ is an extension of $f$.

Finally, to prove that $F$ is $G$-equivariant, observe that for every $g\in G$, the rule $g_i\longmapsto g_i g$ defines a bijection from $G$ onto itself.  Since $g_1g,\dots,g_ng$ is a permutation of $g_1,\dots,g_n$, property (M2) and the equivariance of $p$ imply that
\begin{align*}
F(gz)
&=p\bigl(
g_1^{-1}\widetilde f(g_1gz),
\dots,
g_n^{-1}\widetilde f(g_ngz)
\bigr)\\
&=gg^{-1}p\bigl(
g_1^{-1}\widetilde f(g_1gz),
\dots,
g_n^{-1}\widetilde f(g_ngz)
\bigr)\\
&=gp\bigl(
g^{-1}g_1^{-1}\widetilde f(g_1gz),
\dots,
g^{-1}g_n^{-1}\widetilde f(g_ngz)
\bigr)\\
&=gp\bigl(
(g_1g)^{-1}\widetilde f(g_1gz),
\dots,
(g_ng)^{-1}\widetilde f(g_ngz)
\bigr)\\
&=gp\bigl(
g_1^{-1}\widetilde f(g_1z),
\dots,
g_n^{-1}\widetilde f(g_nz)
\bigr)\\
&=gF(z).
\end{align*}
Therefore, $F$ is equivariant, and the proof is complete.
\end{proof}

\begin{theorem}\label{t:ANE implies G-ANE}
Let $G$ be a finite group and let $X$ be a $G$-space. Assume that $X$ admits an equivariant $n$-mean
$p:X^{n}\rightarrow X$
for some multiple $n$ of $|G|$.
\begin{enumerate}[label=\upshape(\arabic*)]
    \item If $X$ is an $\mathrm{ANE}$, then $X$ is a $G$-$\mathrm{ANE}$.
    \item If $X$ is an $\mathrm{AE}$, then $X$ is a $G$-$\mathrm{AE}$.
\end{enumerate}
\end{theorem}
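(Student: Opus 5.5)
First I would replace the given $n$-mean by one with exactly $|G|$ arguments. Since $|G|$ divides $n$, Lemma~\ref{l: |G| divides n} produces from $p$ an equivariant $|G|$-mean $q:X^{|G|}\to X$. So from now on we may assume $n=|G|$ and write $G=\{g_1,\dots,g_n\}$. This is exactly the setting of Lemma~\ref{l: extension equivariante}.

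For part (2), let $Z$ be a metrizable $G$-space, $A\subset Z$ a closed invariant subset, and $f:A\to X$ a $G$-map. Forgetting the action, $f$ is just a map from a closed subset of a metrizable space. Since $X$ is an $\mathrm{AE}$, $f$ has a continuous extension $\widetilde f:Z\to X$. Lemma~\ref{l: extension equivariante} applied to $\widetilde f$ and $q$ then gives a $G$-map $F:Z\to X$ extending $f$. Hence $X$ is a $G$-$\mathrm{AE}$.

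For part (1), the argument is the same, except that now $X$ is only an $\mathrm{ANE}$. So we get an open neighborhood $V$ of $A$ in $Z$ and a continuous extension $\widetilde f:V\to X$. The neighborhood $V$ need not be invariant, and this is the only real obstacle. Set
\[
U:=\bigcap_{g\in G} gV .
\]
This is a finite intersection of open sets, since each $z\mapsto gz$ is a homeomorphism, so $U$ is open. It is invariant, and it contains $A$ because $A$ is invariant and $A\subset V$. Now restrict $\widetilde f$ to $U$. For $z\in U$ and each $i$ we have $g_i z\in U\subset V$, so every term $g_i^{-1}\widetilde f(g_i z)$ is defined. Applying Lemma~\ref{l: extension equivariante} to $U$ (an invariant metrizable $G$-space containing $A$), the restriction $\widetilde f|_U$, and $q$, we obtain a $G$-map $F:U\to X$ extending $f$. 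Hence $X$ is a $G$-$\mathrm{ANE}$.

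The only points to check are these. The invariant open set $U$ is constructed from $V$ as above. The $G$-spaces in the definitions are metrizable, so the non-equivariant $\mathrm{ANE}$/$\mathrm{AE}$ property applies to $A\subset Z$. No metrizability of $X$ itself is needed.
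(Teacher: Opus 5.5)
Your proof is correct and follows the paper's argument. You reduce to $n=|G|$ via the divisibility lemma, take a non-equivariant extension, and average it with the equivariant extension lemma. The only difference is that you construct the invariant open neighborhood explicitly as $U=\bigcap_{g\in G}gV$, whereas the paper cites Palais for the existence of an invariant open $U\subset V$; for finite $G$ your construction is a valid, self-contained substitute.
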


\begin{proof}
By Lemma~\ref{l: |G| divides n}, we may assume that $n=|G|$.
Let $Z$ be a metrizable $G$-space, $A\subset Z$ a closed invariant subset, and $f:A\to X$ a $G$-map.

\medskip

\noindent
(1) Suppose that $X$ is an $\mathrm{ANE}$. Then there exist an open neighborhood $V\subset Z$ of $A$ and a continuous extension
$\widetilde f:V\to X$ of $f$. By \cite[Proposition 1.1.14]{Palais2}, there exists an invariant open neighborhood $U$ of $A$ such that $U\subset V$.
Thus, $\widetilde f|_U:U\to X$ is a continuous extension of $f$. Applying Lemma~\ref{l: extension equivariante} to $\widetilde f|_U$, we obtain a $G$-map
$F:U\to X$ extending $f$. Hence, $X$ is a $G$-$\mathrm{ANE}$.

\medskip

\noindent
(2) Suppose that $X$ is an $\mathrm{AE}$. Then $f$ admits a continuous extension
$\widetilde f:Z\to X$. Applying Lemma~\ref{l: extension equivariante} directly to $\widetilde f$, we obtain a $G$-map
$F:Z\to X$ extending $f$. Therefore, $X$ is a $G$-$\mathrm{AE}$.
\end{proof}

Combining the previous theorem with Theorems~\ref{t:G-ANE-G-ANR} and
\ref{t: combinacion de teoremas k-mean implica G-AR}, we obtain the following corollary.

\begin{corollary}\label{c:mejora ANR implica G-AR}
Let $G$ be a finite group and let $X$ be a metrizable $G$-space.
Assume that $X$ admits an equivariant $n$-mean
$p:X^{n}\rightarrow X$
for some multiple $n$ of $|G|$.
\begin{enumerate}[label=\upshape(\arabic*)]
    \item If $X$ is an $\mathrm{ANR}$, then $X$ is a $G$-$\mathrm{ANR}$.
    \item If $X$ is a connected $\mathrm{ANR}$ whose homology groups are finitely generated and vanish in all but finitely many dimensions, then $X$ is a $G$-$\mathrm{AR}$. In particular, this holds if $X$ is a compact connected $\mathrm{ANR}$.
\end{enumerate}
\end{corollary}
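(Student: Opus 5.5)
The corollary follows by chaining Theorem~\ref{t:ANE implies G-ANE} with the two earlier characterizations, so no new construction is needed. The only care required is in moving between the "extensor" and "retract" versions of the properties, both equivariantly and non-equivariantly, while keeping track of metrizability.

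For part (1), the first step is to pass from the retract to the extensor property. Since $X$ is a metrizable $\mathrm{ANR}$, the classical non-equivariant case of Theorem~\ref{t:G-ANE-G-ANR} (take $G=\{e\}$) gives that $X$ is an $\mathrm{ANE}$. Next, part (1) of Theorem~\ref{t:ANE implies G-ANE}, applied with the given equivariant $n$-mean where $|G|$ divides $n$, shows that $X$ is a $G$-$\mathrm{ANE}$. Finally, $X$ is metrizable, so Theorem~\ref{t:G-ANE-G-ANR}(1) converts this into the statement that $X$ is a $G$-$\mathrm{ANR}$.

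For part (2), I would use part (1) to conclude that $X$ is a $G$-$\mathrm{ANR}$. It is connected, and by hypothesis its homology groups are finitely generated and vanish in all but finitely many dimensions. It also carries an equivariant $n$-mean with $n$ a multiple of $|G|$. These are exactly the hypotheses of condition (2) in Theorem~\ref{t: combinacion de teoremas k-mean implica G-AR}, so that theorem yields that $X$ is a $G$-$\mathrm{AR}$.

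For the "in particular" clause, I would invoke the standard fact that a compact $\mathrm{ANR}$ has the homotopy type of a finite CW complex (West's theorem), or more elementarily that it is dominated by a finite polyhedron. In either case its homology is finitely generated and vanishes above some dimension, so part (2) applies. The only points needing attention are this citation and the observation that the classical equivalence $\mathrm{ANE}\Leftrightarrow\mathrm{ANR}$ for metrizable spaces is a special case of Theorem~\ref{t:G-ANE-G-ANR}. I see no genuine obstacle; the main step is simply checking that all hypotheses line up, in particular that metrizability is available wherever Theorem~\ref{t:G-ANE-G-ANR} is used.
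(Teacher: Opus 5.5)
Your proposal is correct and follows essentially the same route as the paper, which obtains the corollary by "combining" Theorem~\ref{t:ANE implies G-ANE} with Theorems~\ref{t:G-ANE-G-ANR} and~\ref{t: combinacion de teoremas k-mean implica G-AR}. Concretely, you go ANR $\Rightarrow$ ANE (trivial-group case) $\Rightarrow$ $G$-ANE $\Rightarrow$ $G$-ANR for (1), and use (1) together with condition (2) of Theorem~\ref{t: combinacion de teoremas k-mean implica G-AR} for (2); your appeal to West's theorem (or Borsuk's finite-polyhedron domination) for the compact case supplies a standard fact the paper leaves implicit.
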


We now strengthen Theorem~\ref{t: combinacion de teoremas k-mean implica G-AR}-(1) by showing that, when $X$ is an $\mathrm{AR}$, the existence of a suitable equivariant mean guarantees that $X^H$ is an $\mathrm{AR}$ for every subgroup $H\leq G$.

\begin{theorem}\label{t:AR+mean implica G-AR}
Let $G$ be a finite group and let $X$ be a $G$-space. Assume that $X$ admits an equivariant $n$-mean for some multiple $n$ of $|G|$.
\begin{enumerate}[label=\upshape(\arabic*)]
    \item For every subgroup $H\leq G$, the set $X^H$ is a retract of $X$.
    \item If $X$ is an $\mathrm{AR}$, then $X^H$ is an $\mathrm{AR}$ for every subgroup $H\leq G$, and $X$ is a $G$-$\mathrm{AR}$.
\end{enumerate}
\end{theorem}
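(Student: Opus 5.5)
By Lemma~\ref{l: |G| divides n} we may assume $n=|G|$. Let $p:X^n\to X$ be an equivariant $n$-mean, and write $G=\{g_1,\dots,g_n\}$.

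For part (1), the plan is to average over the group. Define $r:X\to X$ by
\[
r(x):=p\bigl(g_1x,\dots,g_nx\bigr).
\]
Continuity of $r$ is immediate from continuity of $p$ and of the action. To see that $r(x)$ is $G$-fixed, take $g\in G$. Equivariance of $p$ gives $g\,r(x)=p(gg_1x,\dots,gg_nx)$. Since $g_i\mapsto gg_i$ permutes $G$, property (M2) shows this equals $r(x)$. Hence $r(X)\subseteq X^G\subseteq X^H$.

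This retracts onto $X^G$, not onto $X^H$, so for a general subgroup $H$ I would average over $H$ instead. Let $|H|=k$. Since $k$ divides $n$, Lemma~\ref{l: |G| divides n} gives an equivariant $k$-mean $q$. As $q$ is $G$-equivariant, it is in particular $H$-equivariant. Write $H=\{h_1,\dots,h_k\}$ and set
\[
r_H(x):=q(h_1x,\dots,h_kx).
\]
The same permutation argument, now over $H$, gives $r_H(x)\in X^H$. If $x\in X^H$, then $r_H(x)=q(x,\dots,x)=x$ by (M1). So $r_H$ is a retraction of $X$ onto $X^H$.

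The only delicate point is this passage to $H$. An alternative is to apply Lemma~\ref{l: extension equivariante} to the $H$-space $X$, with $Z=X$, $A=X^H$, and $\widetilde f=\mathrm{id}$. However, that lemma needs an $H$-equivariant $|H|$-mean, which is exactly what Lemma~\ref{l: |G| divides n} supplies. I expect no real obstacle beyond checking that the mean obtained from Lemma~\ref{l: |G| divides n} is still $H$-equivariant, and this is immediate.

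For part (2), a retract of an AR is an AR, so part (1) shows $X^H$ is an AR for every $H\le G$. Then Theorem~\ref{t: combinacion de teoremas k-mean implica G-AR}(1) yields that $X$ is a $G$-AR. Metrizability of $X$, which that theorem requires, holds because ARs are metrizable by definition.

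Alternatively, Theorem~\ref{t:ANE implies G-ANE}(2) gives that $X$ is a $G$-AE, and Theorem~\ref{t:G-ANE-G-ANR} converts this into $G$-AR. Either route completes the proof.
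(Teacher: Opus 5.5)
Your proof is correct and follows the paper's argument: you average over $H$ using the equivariant $|H|$-mean from Lemma~\ref{l: |G| divides n}, and then combine the fact that retracts of ARs are ARs with Theorem~\ref{t: combinacion de teoremas k-mean implica G-AR}-(1). Your aside about Lemma~\ref{l: extension equivariante} does not work, though: with $Z=X$, $A=X^H$ and $\widetilde f=\mathrm{id}$, that lemma simply returns $F=\mathrm{id}_X$, which is not a retraction onto $X^H$. Your main argument does not depend on it.
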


\begin{proof}
To prove (1), let $H\leq G$ be a subgroup and write
$H=\{h_1,\dots,h_m\}$. By Lagrange's theorem, $m$ divides $|G|$, and therefore it also divides $n$. Hence, by Lemma~\ref{l: |G| divides n}, there exists an equivariant $m$-mean
$q:X^m\to X$.

Define $r:X\to X^H$ by
\[
r(x):=q(h_1x,\dots,h_mx).
\]
Clearly, $r$ is continuous. To verify that $r(x)\in X^H$, let $x\in X$ and $h\in H$. Notice that the rule $h_i\longmapsto hh_i$ defines a permutation of the elements of $H$. Thus, using the equivariance of $q$ together with property (M2), we obtain
\begin{align*}
hr(x)
&=hq(h_1x,\dots,h_mx)\\
&=q(hh_1x,\dots,hh_mx)\\
&=q(h_1x,\dots,h_mx)\\
&=r(x).
\end{align*}
Hence, $r(x)\in X^H$, and therefore $r$ is well defined.

It remains to prove that $r$ is a retraction. Indeed, if $x\in X^H$, then $h_ix=x$ for every $h_i\in H$. Hence, property (M1) yields
\[
r(x)=q(h_1x,\dots,h_mx)=q(x,\dots,x)=x.
\]
Thus, $X^H$ is a retract of $X$.

For (2), suppose that $X$ is an $\mathrm{AR}$. By (1), $X^H$ is a retract of $X$ for every subgroup $H\leq G$. Since every retract of an $\mathrm{AR}$ is an $\mathrm{AR}$, we conclude that $X^H$ is an $\mathrm{AR}$ for every $H\leq G$. The conclusion now follows immediately from Theorem~\ref{t: combinacion de teoremas k-mean implica G-AR}-(1).
\end{proof}

The following special cases of Theorems~\ref{t:ANE implies G-ANE} and \ref{t:AR+mean implica G-AR} will be used later.

\begin{corollary}\label{c:equivariant mean implies Z2AR}
Let $(X,\alpha)$ be a $\mathbb Z_2$-space and assume that $X$ admits
an equivariant mean. Then:
\begin{enumerate}[label=\upshape(\arabic*)]
    \item If $X$ is an $\mathrm{AE}$, then $(X,\alpha)$ is a
    $\mathbb Z_2$-$\mathrm{AE}$.
    \item If $X$ is an $\mathrm{AR}$, then $(X,\alpha)$ is a
    $\mathbb Z_2$-$\mathrm{AR}$.
\end{enumerate}
\end{corollary}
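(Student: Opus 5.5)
This corollary is the special case $G=\mathbb Z_2$, $n=2$ of Theorems~\ref{t:ANE implies G-ANE} and \ref{t:AR+mean implica G-AR}, so the plan is simply to check the hypotheses and invoke them. First I would record that the involution $\alpha$ defines a $\mathbb Z_2$-action, that $|\mathbb Z_2|=2$, and that an equivariant mean is by Definition~\ref{def:n-mean} an equivariant $2$-mean. Hence $n=2$ is a multiple of $|G|$, and no reduction through Lemma~\ref{l: |G| divides n} is needed.

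For (1), I would apply Theorem~\ref{t:ANE implies G-ANE}(2) directly: $X$ is an $\mathrm{AE}$ and carries an equivariant $2$-mean, so $(X,\alpha)$ is a $\mathbb Z_2$-$\mathrm{AE}$. Concretely, the extension supplied by Lemma~\ref{l: extension equivariante} is
\[
F(z)=p\bigl(\widetilde f(z),\alpha(\widetilde f(\alpha z))\bigr).
\]

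For (2), the hypothesis that $X$ is an $\mathrm{AR}$ includes metrizability, so $X$ is a metrizable $\mathbb Z_2$-space. Theorem~\ref{t:AR+mean implica G-AR}(2) then gives directly that $(X,\alpha)$ is a $\mathbb Z_2$-$\mathrm{AR}$. The intermediate step there is that $X^{\mathbb Z_2}$ is a retract of $X$ via $r(x)=p(x,\alpha(x))$, while the trivial subgroup gives $X^{\{e\}}=X$; Theorem~\ref{t: combinacion de teoremas k-mean implica G-AR}(1) then finishes the argument.

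The only point needing care is metrizability in (2), which is part of the definition of $\mathrm{AR}$ in the paper. Beyond that the argument is purely formal and I expect no real obstacle.
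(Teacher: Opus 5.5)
Your proposal is correct and is the paper's argument: the paper states this corollary without a separate proof, as the special case $G=\mathbb Z_2$, $n=2$ of Theorems~\ref{t:ANE implies G-ANE} and~\ref{t:AR+mean implica G-AR}. Your explicit specializations $F(z)=p\bigl(\widetilde f(z),\alpha(\widetilde f(\alpha z))\bigr)$ and $r(x)=p(x,\alpha(x))$ are accurate, and you are right that metrizability is built into the paper's definition of $\mathrm{AR}$.
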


\section{Equivariant means and order}

We now turn our attention to $\mathbb{Z}_2$-spaces endowed with an order
structure. Our main goal is to use the order to construct equivariant
means and, consequently, to obtain sufficient conditions for such spaces
to be $\mathbb{Z}_2$-$\mathrm{AR}$s.

In particular, this approach is motivated by the following weak version of Jaworowski's problem.

\begin{problem}[Weak version of Jaworowski's problem]
\label{version debil del problema de Jaworowski resumido}
Let $(X,\alpha)$ be a metrizable $\mathbb{Z}_2$-space. Assume that
$X$ and $X^{\mathbb{Z}_2}$ are $\mathrm{AR}$s, and that $X$ admits a topological
lattice structure $(X,\leq,\wedge,\vee)$ such that $\alpha$ is decreasing
with respect to $\leq$. Is $(X,\alpha)$ a $\mathbb{Z}_2$-$\mathrm{AR}$?
\end{problem}

Our strategy is to identify conditions on the order structure that
guarantee the existence of an equivariant mean. In view of
Corollary~\ref{c:equivariant mean implies Z2AR}, this provides a natural
way to approach the problem above.

We begin with some terminology and basic properties.

\begin{definition}
Let $(X,\alpha)$ be a $\mathbb{Z}_2$-space and let $\leq$ be a partial order on $X$. We say that $\alpha$ is a \textit{decreasing involution} with respect to $\leq$ if
\[
x\leq y \quad\Longrightarrow\quad \alpha(y)\leq\alpha(x)
\]
for every $x,y\in X$.

If $\alpha$ is a decreasing involution with respect to $\leq$, we say that
$(X,\alpha,\leq)$ is a \textit{$\mathbb{Z}_2$-decreasing poset}.

If, in addition, $(X,\leq,\wedge)$ (respectively, $(X,\leq,\vee)$) is a topological semilattice, then we say that
$(X,\alpha,\leq,\wedge)$ (respectively, $(X,\alpha,\leq,\vee)$) is a
\textit{$\mathbb{Z}_2$-decreasing semilattice}.

When, moreover, $(X,\leq,\wedge,\vee)$ is a topological lattice, we call
$(X,\alpha,\leq,\wedge,\vee)$ a \textit{$\mathbb{Z}_2$-decreasing lattice}.
\end{definition}

Notice that what we call a decreasing involution is also referred to as an
\textit{order-reversing involution}
(see, e.g., \cite[Definition 2.2]{TerzilerSertogluPolat}).
Since an involution is bijective and $\alpha^{-1}=\alpha$, it is also an
involutory dual automorphism of the poset in the terminology of Birkhoff
\cite{Birkhoff}.
In the terminology of Artstein-Avidan and Milman, an order-reversing
involution on an ordered class of functions is called a
\textit{duality transform} (\cite[Definition 11]{ArtsteinAvidanMilman}).

Thus, in the terminology introduced above, Problem~\ref{version debil del problema de Jaworowski resumido}
asks whether every $\mathbb{Z}_2$-decreasing lattice whose underlying space
and fixed point set are $\mathrm{AR}$s must be a $\mathbb{Z}_2$-$\mathrm{AR}$.

The following De Morgan laws for order-reversing involutions are standard
(see, e.g., \cite[Proposition 3.1]{TerzilerSertogluPolat}; see also
\cite{Birkhoff}).

\begin{lemma}\label{l:decreasinglattices}
Let $(X,\alpha,\leq,\wedge,\vee)$ be a $\mathbb{Z}_2$-decreasing lattice.
Then, for every $x,y\in X$,
\[
\alpha(x\vee y)=\alpha(x)\wedge\alpha(y)
\qquad\text{and}\qquad
\alpha(x\wedge y)=\alpha(x)\vee\alpha(y).
\]
\end{lemma}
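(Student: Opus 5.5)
The plan is to argue purely order-theoretically, using only that $\alpha$ is a decreasing bijection with $\alpha^{-1}=\alpha$; continuity and the topology play no role. I will prove the first identity $\alpha(x\vee y)=\alpha(x)\wedge\alpha(y)$ by checking that $\alpha(x\vee y)$ satisfies the defining property of the greatest lower bound of $\{\alpha(x),\alpha(y)\}$. Then I will deduce the second identity by applying the first to $\alpha(x)$ and $\alpha(y)$ and using $\alpha\circ\alpha=1_X$.

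For the first identity I proceed in two steps. First, $\alpha(x\vee y)$ is a lower bound. Since $x\leq x\vee y$ and $y\leq x\vee y$, the decreasing property gives $\alpha(x\vee y)\leq\alpha(x)$ and $\alpha(x\vee y)\leq\alpha(y)$. Hence $\alpha(x\vee y)\leq\alpha(x)\wedge\alpha(y)$.

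Second, it is the greatest lower bound. Put $z=\alpha(x)\wedge\alpha(y)$. From $z\leq\alpha(x)$, applying $\alpha$ and using $\alpha\circ\alpha=1_X$, we get $x=\alpha(\alpha(x))\leq\alpha(z)$. Likewise $y\leq\alpha(z)$. So $\alpha(z)$ is an upper bound of $\{x,y\}$, and therefore $x\vee y\leq\alpha(z)$. Applying $\alpha$ once more gives $z=\alpha(\alpha(z))\leq\alpha(x\vee y)$. By antisymmetry, $\alpha(x\vee y)=z$.

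For the second identity, apply the first to the elements $\alpha(x),\alpha(y)$. This gives $\alpha(\alpha(x)\vee\alpha(y))=x\wedge y$. Applying $\alpha$ to both sides yields $\alpha(x)\vee\alpha(y)=\alpha(x\wedge y)$.

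There is no real obstacle here. The only point needing care is the reverse inequality in the first identity. It relies on the involution property to turn $\alpha$ into an order anti-isomorphism. Monotonicity alone would give only one inequality.
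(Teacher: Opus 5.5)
Your proof is correct. The reverse inequality in the first identity uses $\alpha\circ\alpha=1_X$ exactly where it is needed, and the second identity follows cleanly by applying the first to $\alpha(x),\alpha(y)$. The paper does not prove this lemma itself but cites it as standard; your argument is the same order-reversal argument the paper uses in the proof of Lemma~\ref{l:semilattice implies lattice}, where $\alpha\bigl(\alpha(x)\wedge\alpha(y)\bigr)$ is shown to be the least upper bound of $\{x,y\}$.
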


The notions of $\mathbb{Z}_2$-decreasing semilattice and
$\mathbb{Z}_2$-decreasing lattice are closely related. In fact, in the
presence of a decreasing involution, either semilattice operation
determines the other, as shown in the following lemma.

\begin{lemma}\label{l:semilattice implies lattice}
Let $(X,\alpha)$ be a $\mathbb{Z}_2$-space. If
$(X,\alpha,\leq,\wedge)$ is a $\mathbb{Z}_2$-decreasing semilattice,
then there exists a continuous operation
$\vee:X\times X\to X$
such that $(X,\alpha,\leq,\wedge,\vee)$ is a
$\mathbb{Z}_2$-decreasing lattice.

Similarly, if $(X,\alpha,\leq,\vee)$ is a
$\mathbb{Z}_2$-decreasing semilattice, then there exists a continuous
operation $\wedge:X\times X\to X$ such that
$(X,\alpha,\leq,\wedge,\vee)$ is a
$\mathbb{Z}_2$-decreasing lattice.
\end{lemma}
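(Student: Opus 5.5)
Given a $\mathbb{Z}_2$-decreasing semilattice $(X,\alpha,\leq,\wedge)$, I will define the missing operation explicitly by conjugating the meet with the involution:
\[
x\vee y:=\alpha\bigl(\alpha(x)\wedge\alpha(y)\bigr),\qquad x,y\in X.
\]
Continuity is immediate. The map $\vee$ is the composition of $\alpha\times\alpha$, the continuous meet, and $\alpha$, all of which are maps.

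The main step is to check that $x\vee y$ is indeed the least upper bound of $\{x,y\}$ in $(X,\leq)$. This uses only that $\alpha$ is an order-reversing bijection with $\alpha^{-1}=\alpha$.

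First, $x\vee y$ is an upper bound. Since $\alpha(x)\wedge\alpha(y)\leq\alpha(x)$, applying the decreasing map $\alpha$ gives $x=\alpha(\alpha(x))\leq \alpha(\alpha(x)\wedge\alpha(y))=x\vee y$. The same argument gives $y\leq x\vee y$.

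Second, $x\vee y$ is the least upper bound. Let $z$ satisfy $x\leq z$ and $y\leq z$. Then $\alpha(z)\leq\alpha(x)$ and $\alpha(z)\leq\alpha(y)$, so $\alpha(z)$ is a lower bound of $\{\alpha(x),\alpha(y)\}$. Hence $\alpha(z)\leq\alpha(x)\wedge\alpha(y)$. Applying $\alpha$ once more yields $x\vee y=\alpha(\alpha(x)\wedge\alpha(y))\leq\alpha(\alpha(z))=z$.

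It follows that $(X,\leq,\wedge,\vee)$ is a lattice, and it is a topological lattice because both operations are continuous and $X$ is Hausdorff by the standing assumption. Since $\alpha$ is still decreasing for the same order $\leq$, the structure $(X,\alpha,\leq,\wedge,\vee)$ is a $\mathbb{Z}_2$-decreasing lattice.

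The second statement is obtained in the same way. Given $(X,\alpha,\leq,\vee)$, define
\[
x\wedge y:=\alpha\bigl(\alpha(x)\vee\alpha(y)\bigr).
\]
The argument above goes through with all inequalities reversed. Equivalently, one can apply the first part to the dual order $\leq'$, for which $\alpha$ is still decreasing and $\vee$ becomes the meet.

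There is no real obstacle in this proof. The only point needing care is using $\alpha\circ\alpha=1_X$ to pass inequalities back and forth. Note also that the construction is forced: by Lemma~\ref{l:decreasinglattices}, any join compatible with $\alpha$ must be given by this formula.
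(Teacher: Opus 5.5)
Your proposal is correct and follows the paper's proof essentially step for step: the same formula $x\vee y=\alpha\bigl(\alpha(x)\wedge\alpha(y)\bigr)$, the same upper-bound and least-upper-bound verification using that $\alpha$ is decreasing with $\alpha\circ\alpha=1_X$, and the same appeal to the dual case. One small remark: the formula is forced simply because least upper bounds are unique, so there is no need to cite Lemma~\ref{l:decreasinglattices}. That lemma presupposes a $\mathbb{Z}_2$-decreasing lattice, which is what you are constructing here.
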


\begin{proof}
Assume that $(X,\alpha,\leq,\wedge)$ is a $\mathbb{Z}_2$-decreasing semilattice. Define
$\vee:X\times X\to X$ by
\[
x\vee y:=\alpha\bigl(\alpha(x)\wedge\alpha(y)\bigr).
\]
Since $\wedge$ and $\alpha$ are continuous, $\vee$ is also continuous.

Let $x,y\in X$. We will show that $x\vee y$ is the least upper bound of $\{x,y\}$. Since
\[
\alpha(x)\wedge\alpha(y)\leq\alpha(x)
\qquad\text{and}\qquad
\alpha(x)\wedge\alpha(y)\leq\alpha(y),
\]
and $\alpha$ is decreasing with respect to $\leq$, we obtain
\[
x\leq \alpha\bigl(\alpha(x)\wedge\alpha(y)\bigr)
\qquad\text{and}\qquad
y\leq \alpha\bigl(\alpha(x)\wedge\alpha(y)\bigr).
\]
Thus, $\alpha\bigl(\alpha(x)\wedge\alpha(y)\bigr)$ is an upper bound of $\{x,y\}$.

Now, suppose that $s\in X$ is another upper bound of $\{x,y\}$. Since
$x\leq s$ and $y\leq s$, and $\alpha$ is decreasing, we have
\[
\alpha(s)\leq\alpha(x)
\qquad\text{and}\qquad
\alpha(s)\leq\alpha(y).
\]
Therefore, $\alpha(s)\leq\alpha(x)\wedge\alpha(y)$, and applying $\alpha$ once more gives
\[
\alpha\bigl(\alpha(x)\wedge\alpha(y)\bigr)\leq s.
\]
Hence, $\alpha\bigl(\alpha(x)\wedge\alpha(y)\bigr)$ is the least upper bound of $\{x,y\}$, as desired.

Thus, $(X,\leq,\wedge,\vee)$ is a lattice and, since both $\wedge$ and $\vee$ are continuous, it is a topological lattice. Therefore, $(X,\alpha,\leq,\wedge,\vee)$ is a $\mathbb{Z}_2$-decreasing lattice.

The case in which $(X,\alpha,\leq,\vee)$ is a $\mathbb{Z}_2$-decreasing semilattice is analogous.
\end{proof}

By Lemma~\ref{l:semilattice implies lattice}, every result proved for
$\mathbb{Z}_2$-decreasing lattices also applies to
$\mathbb{Z}_2$-decreasing semilattices.

\subsection{$\mathbb{Z}_2$-decreasing modular lattices.}

We begin with the case of modular lattices. In this setting, the modular
identity allows us to construct an equivariant mean whenever the
involution has a fixed point. As a consequence, we obtain a positive
answer to the weak version of Jaworowski's problem for
$\mathbb{Z}_2$-decreasing modular lattices.

\begin{theorem}
Let $(X,\alpha,\leq,\wedge,\vee)$ be a $\mathbb{Z}_2$-decreasing lattice.
\begin{enumerate}[label=\upshape(\arabic*)]
\item If there exists $\theta\in X^{\mathbb{Z}_2}$ such that, for every $a,b\in X$ with $a\leq b$,
\begin{equation}\label{eqmodular}
a\vee(\theta\wedge b)=(a\vee\theta)\wedge b,
\end{equation}
then the map $p:X\times X\to X$ defined by
\[
p(x,y):=(x\wedge y)\vee\bigl(\theta\wedge(x\vee y)\bigr)
\]
is an equivariant mean on $X$.

\item If, in addition, $X$ is an $\mathrm{AR}$ ($\mathrm{AE}$), then $(X,\alpha)$ is a $\mathbb{Z}_2$-$\mathrm{AR}$ ($\mathbb{Z}_2$-$\mathrm{AE}$).
\end{enumerate}
\end{theorem}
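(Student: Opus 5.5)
We check continuity, (M1), (M2) and equivariance of $p$ directly, and then deduce (2) from Corollary~\ref{c:equivariant mean implies Z2AR}. Continuity is immediate, since $p$ is a composition of the continuous lattice operations $\wedge$ and $\vee$. Symmetry (M2) holds because $x\wedge y$ and $x\vee y$ are symmetric in $x,y$. For (M1), note that
\[
p(x,x)=x\vee(\theta\wedge x)=x,
\]
by absorption, since $\theta\wedge x\leq x$.

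The main step is equivariance, that is, $p(\alpha x,\alpha y)=\alpha(p(x,y))$. Write $a=x\wedge y$ and $b=x\vee y$, so that $a\leq b$. By Lemma~\ref{l:decreasinglattices}, $\alpha x\wedge\alpha y=\alpha(b)$ and $\alpha x\vee \alpha y=\alpha(a)$. Hence
\[
p(\alpha x,\alpha y)=\alpha(b)\vee\bigl(\theta\wedge\alpha(a)\bigr).
\]
On the other hand, using De Morgan again together with $\alpha(\theta)=\theta$, we get
\[
\alpha\bigl(a\vee(\theta\wedge b)\bigr)=\alpha(a)\wedge\bigl(\theta\vee\alpha(b)\bigr)=\bigl(\alpha(b)\vee\theta\bigr)\wedge\alpha(a).
\]
Since $\alpha(b)\leq\alpha(a)$, we may apply hypothesis \eqref{eqmodular} to the pair $\alpha(b)\leq\alpha(a)$. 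This gives
\[
\alpha(b)\vee\bigl(\theta\wedge\alpha(a)\bigr)=\bigl(\alpha(b)\vee\theta\bigr)\wedge\alpha(a),
\]
which is exactly the required equality. The only delicate point is to arrange the terms so that the modular identity at $\theta$ is applied to the correctly ordered pair $\alpha(b)\leq\alpha(a)$. It is also worth noting that $p$ is a well-defined lattice expression, and no other use of modularity is needed.

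For (2), $p$ is an equivariant mean on $X$, so Corollary~\ref{c:equivariant mean implies Z2AR} applies. If $X$ is an $\mathrm{AR}$, then $(X,\alpha)$ is a $\mathbb Z_2$-$\mathrm{AR}$, and if $X$ is an $\mathrm{AE}$, then $(X,\alpha)$ is a $\mathbb Z_2$-$\mathrm{AE}$.
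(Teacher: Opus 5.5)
Your proof is correct and follows essentially the paper's argument. For (1), you make a single application of the identity at $\theta$, combined with the De Morgan laws and $\alpha(\theta)=\theta$. For (2), you invoke Corollary~\ref{c:equivariant mean implies Z2AR}. The only cosmetic difference is where the identity is applied: you use the pair $\alpha(x)\wedge\alpha(y)\leq\alpha(x)\vee\alpha(y)$, whereas the paper uses $x\wedge y\leq x\vee y$ and then transports the result through $\alpha$.
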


\begin{proof}
(1) Notice that $p$ is symmetric and satisfies $p(x,x)=x$ for every $x\in X$. Also, since $(X,\leq,\wedge,\vee)$ is a topological lattice, $p$ is continuous. Hence, $p$ is a mean.

Let $x,y\in X$. Applying~\eqref{eqmodular} with $a=x\wedge y$ and
$b=x\vee y$, and using the fact that $\theta$ is fixed by $\alpha$, we obtain
\begin{align*}
p(x,y)
&=(x\wedge y)\vee\bigl(\theta\wedge(x\vee y)\bigr)\\
&=\bigl((x\wedge y)\vee\theta\bigr)\wedge(x\vee y)\\
&=\alpha\circ\alpha\bigl(
\bigl((x\wedge y)\vee\theta\bigr)\wedge(x\vee y)
\bigr)\\
&=\alpha\bigl(
\alpha\bigl((x\wedge y)\vee\theta\bigr)
\vee\alpha(x\vee y)
\bigr)\\
&=\alpha\bigl(
\bigl(\alpha(x\wedge y)\wedge\alpha(\theta)\bigr)
\vee
\bigl(\alpha(x)\wedge\alpha(y)\bigr)
\bigr)\\
&=\alpha\bigl(
\bigl((\alpha(x)\vee\alpha(y))\wedge\theta\bigr)
\vee
\bigl(\alpha(x)\wedge\alpha(y)\bigr)
\bigr)\\
&=\alpha\bigl(p(\alpha(x),\alpha(y))\bigr).
\end{align*}
Therefore, $p$ is an equivariant mean.

(2) Suppose that $X$ is an $\mathrm{AR}$ ($\mathrm{AE}$). By (1), $X$ admits an equivariant mean. Therefore, Corollary~\ref{c:equivariant mean implies Z2AR}  implies that $(X,\alpha)$ is a $\mathbb{Z}_2$-$\mathrm{AR}$ ($\mathbb{Z}_2$-$\mathrm{AE}$).
\end{proof}

Since every point of a modular lattice satisfies condition~\eqref{eqmodular}, and every distributive lattice is modular, we obtain the following corollary.

\begin{corollary}\label{c:caso modular}
Let $(X,\alpha,\leq,\wedge,\vee)$ be a
$\mathbb{Z}_2$-decreasing lattice such that $X$ is an
$\mathrm{AR}$ (respectively, an $\mathrm{AE}$). If
$(X,\leq,\wedge,\vee)$ is modular (in particular, if it is
distributive) and $X^{\mathbb{Z}_2}\neq\emptyset$, then $(X,\alpha)$
is a $\mathbb{Z}_2$-$\mathrm{AR}$ (respectively, a
$\mathbb{Z}_2$-$\mathrm{AE}$). Moreover, in the $\mathrm{AR}$ case,
the assumption $X^{\mathbb{Z}_2}\neq\emptyset$ may be omitted when
$X$ is compact.
\end{corollary}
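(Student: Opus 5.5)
The first part of Corollary~\ref{c:caso modular} follows directly from the preceding theorem. If $(X,\leq,\wedge,\vee)$ is modular, then condition~\eqref{eqmodular} holds for any $\theta\in X$: whenever $a\leq b$, the modular law with $x=a$ gives $a\vee(\theta\wedge b)=(a\vee\theta)\wedge b$. Since $X^{\mathbb Z_2}\neq\emptyset$, we may pick $\theta\in X^{\mathbb Z_2}$. Part~(1) of the theorem then yields the equivariant mean $p(x,y)=(x\wedge y)\vee(\theta\wedge(x\vee y))$, and part~(2) shows that $(X,\alpha)$ is a $\mathbb Z_2$-$\mathrm{AR}$ (respectively, a $\mathbb Z_2$-$\mathrm{AE}$). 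The distributive case is covered because distributive lattices are modular.

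The remaining claim is that, in the $\mathrm{AR}$ case with $X$ compact, the hypothesis $X^{\mathbb Z_2}\neq\emptyset$ is automatic. I see two routes.

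\emph{Fixed point theory.} A compact $\mathrm{AR}$ has the fixed point property. This is the Lefschetz/Schauder-type theorem: a compact $\mathrm{AR}$ is a retract of the Hilbert cube, and the Hilbert cube has the fixed point property. So the continuous map $\alpha\colon X\to X$ has a fixed point, which means $X^{\mathbb Z_2}\neq\emptyset$, and the first part applies. This route does not use the lattice structure at all, but it does require that $X$ be metrizable, which is built into the definition of $\mathrm{AR}$.

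\emph{Order theory.} Alternatively, one can build a fixed point from the order. A compact topological lattice is complete. Completeness follows from Lemma~\ref{l:locally convex}, which gives the Dini property: directed sets have suprema via convergent nets, and compactness provides the limit points. Hence $X$ has a least element $0$ and a greatest element $1$. One could then try a Tarski-type argument using $x\mapsto x\vee\alpha(x)$ or $x\wedge\alpha(x)$. However, a decreasing involution need not have a fixed point in a general lattice; the two-element chain with $\alpha$ swapping its elements is a counterexample. The connectedness coming from the $\mathrm{AR}$ hypothesis would therefore have to enter, and this route is harder.

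I will use the first route. The only step I expect to need care is the citation that compact $\mathrm{AR}$s have the fixed point property; the rest follows directly from the theorem.
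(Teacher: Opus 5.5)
Your proposal is correct and follows the paper's proof: the first assertion comes from the preceding theorem, since in a modular lattice every $\theta\in X^{\mathbb Z_2}$ satisfies condition~\eqref{eqmodular}, and in the compact case $\alpha$ has a fixed point because compact $\mathrm{AR}$s have the fixed point property (the paper cites van Mill for this). The order-theoretic alternative you sketch is not needed, and you were right to set it aside.
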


\begin{proof}
The first assertion follows from the preceding theorem. For the final
assertion, suppose that $X$ is a compact $\mathrm{AR}$. By
\cite[Corollary~3.5.4]{vanmill}, $X$ has the fixed point property.
Therefore, $\alpha$ has a fixed point, and hence
$X^{\mathbb{Z}_2}\neq\emptyset$.
\end{proof}

\begin{example}
Consider the Hilbert cube $Q$ and the topological lattice structure
$(Q,\leq,\wedge,\vee)$, where $\leq$ is the partial order defined by
\[
(x_n)\leq(y_n)
\quad\text{if and only if}\quad
x_n\leq y_n \text{ for every } n\in\mathbb{N},
\]
and the operations $\wedge$ and $\vee$ are given by
\[
(x_n)\wedge(y_n)
=
\bigl(\min\{x_n,y_n\}\bigr),
\qquad
(x_n)\vee(y_n)
=
\bigl(\max\{x_n,y_n\}\bigr).
\]
Notice that $(Q,\leq,\wedge,\vee)$ is a distributive topological lattice. Therefore, by Corollary~\ref{c:caso modular}, $(Q,\alpha)$ is a $\mathbb{Z}_2$-$\mathrm{AR}$ for every involution $\alpha:Q\to Q$ that is decreasing with respect to $\leq$.
$\blacktriangleleft$
\end{example}

\subsection{Generalized geometric means}

Inspired by the Babylonian method for computing the geometric mean and
by the work of V.~Milman and L.~Rotem \cite{MilmanRotem}, we introduce
the following notions.

\begin{definition}\label{strict order preserving}
Let $X$ be a topological space endowed with a partial order $\leq$, and
let $p:X\times X\rightarrow X$ be a mean. We say that $p$ is an
\textit{order-preserving mean} if
\[
x\leq p(x,y)\leq y
\]
whenever $x\leq y$.

An order-preserving mean $p$ is called
\begin{itemize}
    \item \textit{left-strict} if
    \[
    x<p(x,y)\leq y
    \]
    whenever $x<y$;

    \item \textit{right-strict} if
    \[
    x\leq p(x,y)<y
    \]
    whenever $x<y$.
\end{itemize}

We abbreviate these conditions by saying that $p$ is an
\textit{$\mathrm{LS}$-order-preserving mean} or an
\textit{$\mathrm{RS}$-order-preserving mean}, respectively. If $p$ is both
left-strict and right-strict, we simply say that it is a
\textit{strict order-preserving mean}.
\end{definition}

\begin{definition}\label{def:arithmetic-harmonic-Babylonian}
Let $(X,\alpha,\leq)$ be a $\mathbb Z_2$-decreasing poset, and let
$p:X\times X\rightarrow X$ be a mean. Define the mean
$p^{\alpha}:X\times X\rightarrow X$ by
\[
p^{\alpha}(x,y)
:=
\alpha\bigl(p(\alpha(x),\alpha(y))\bigr).
\]

We introduce the following terminology:
\begin{enumerate}
    \item We say that $p$ is an \textit{arithmetic mean} with respect to
    $\alpha$ if
    \[
    p^{\alpha}(x,y)\leq p(x,y)
    \]
    for every $x,y\in X$.

    \item We say that $p$ is a \textit{harmonic mean} with respect to
    $\alpha$ if
    \[
    p(x,y)\leq p^{\alpha}(x,y)
    \]
    for every $x,y\in X$.

    \item For every $n\geq 0$, define maps
$a_n,h_n:X\times X\rightarrow X$ recursively by
\[
a_0(x,y)=x,\qquad h_0(x,y)=y,
\]
and
\[
a_{n+1}(x,y)
=p\bigl(a_n(x,y),h_n(x,y)\bigr),
\]
\[
h_{n+1}(x,y)
=p^\alpha\bigl(a_n(x,y),h_n(x,y)\bigr).
\]

We say that $p$ is \textit{Babylonian} if one of the sequences of maps
$(a_n)_{n\geq 1}$ and $(h_n)_{n\geq 1}$ is monotonically increasing
with respect to the pointwise order and the other is monotonically
decreasing, and if, for every $x,y\in X$, the sequences
$\bigl(a_n(x,y)\bigr)$ and $\bigl(h_n(x,y)\bigr)$ converge and satisfy
\[
\lim_{n\rightarrow\infty}a_n(x,y)
=
\lim_{n\rightarrow\infty}h_n(x,y).
\]
\end{enumerate}
\end{definition}

\begin{lemma}\label{l:properties-dual-mean}
Let $(X,\alpha,\leq)$ be a $\mathbb Z_2$-decreasing poset, and let
$p:X\times X\rightarrow X$ be a mean. Then:
\begin{enumerate}[label=\upshape(\arabic*)]
    \item $\left(p^\alpha\right)^\alpha=p$.
    \item $p$ is an arithmetic mean with respect to $\alpha$ if and only
    if $p^\alpha$ is a harmonic mean with respect to $\alpha$.
    \item $p$ is an $\mathrm{RS}$-order-preserving mean if and only if $p^\alpha$
    is an $\mathrm{LS}$-order-preserving mean.
    \item $p$ is Babylonian if and only if $p^\alpha$ is Babylonian.
\end{enumerate}
\end{lemma}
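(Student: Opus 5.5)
The plan is to prove the four items in order, each by unwinding the definition $p^\alpha(x,y)=\alpha\bigl(p(\alpha(x),\alpha(y))\bigr)$ and using two facts: $\alpha\circ\alpha=1_X$, and $\alpha$ reverses both $\leq$ and strict inequality $<$. The latter holds because $\alpha$ is a bijection, so $x\neq y$ implies $\alpha(x)\neq\alpha(y)$. First I will record that $p^\alpha$ is indeed a mean. It is continuous as a composition of continuous maps, it is symmetric because $p$ is, and $p^\alpha(x,x)=\alpha(p(\alpha(x),\alpha(x)))=\alpha(\alpha(x))=x$.

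For (1), direct computation gives
\[
(p^\alpha)^\alpha(x,y)=\alpha\bigl(\alpha\bigl(p(\alpha\alpha(x),\alpha\alpha(y))\bigr)\bigr)=p(x,y).
\]
For (2), I will use that $p$ is arithmetic exactly when $p^\alpha\leq p$ pointwise. By (1) this is the same as $p^\alpha\leq (p^\alpha)^\alpha$, which is the definition of $p^\alpha$ being harmonic.

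For (3), assume $p$ is RS-order-preserving and let $x<y$. Then $\alpha(y)<\alpha(x)$, so
\[
\alpha(y)\leq p(\alpha(y),\alpha(x))<\alpha(x).
\]
Applying $\alpha$ reverses these inequalities and gives $x<p^\alpha(x,y)\leq y$, using symmetry of $p$ to swap the arguments. The non-strict case $x\leq y$, and hence order preservation, is handled the same way. The converse follows by applying this implication to $p^\alpha$ together with (1), after checking the symmetric statement that LS for $q$ gives RS for $q^\alpha$ by the identical argument.

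For (4), the key step is to relate the two iterations. Let $a_n,h_n$ be the sequences built from $p$, and $a_n',h_n'$ those built from $p^\alpha$ (whose dual is $p$, by (1)). I will show by induction that $a_n'(x,y)=\alpha\bigl(h_n(\alpha x,\alpha y)\bigr)$ and $h_n'(x,y)=\alpha\bigl(a_n(\alpha x,\alpha y)\bigr)$ for $n\geq1$. The base case is $a_1'(x,y)=p^\alpha(x,y)=\alpha(h_1(\alpha x,\alpha y))$, and similarly for $h_1'$. In the inductive step, $a_{n+1}'=p^\alpha(a_n',h_n')=\alpha\bigl(p(\alpha a_n',\alpha h_n')\bigr)$, which by the hypothesis equals $\alpha\bigl(p(h_n,a_n)\bigr)$ evaluated at $(\alpha x,\alpha y)$. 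By symmetry of $p$ this is $\alpha(a_{n+1}(\alpha x,\alpha y))$. The computation for $h_{n+1}'$ is analogous. Note that I only need $n\geq1$, which avoids the asymmetric starting values $a_0=x$, $h_0=y$.

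With this relation, monotonicity transfers: if $(a_n)$ increases pointwise, then $(h_n')$ decreases, and so on, because $\alpha$ reverses order. Convergence and equality of the limits also transfer, because $\alpha$ is a homeomorphism: the limit of $a_n'(x,y)$ is $\alpha$ applied to the limit of $h_n(\alpha x,\alpha y)$, and likewise for $h_n'$, so the two limits coincide. The converse direction then follows from (1).

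I expect the main obstacle to be this bookkeeping in (4), in particular keeping track of which sequence is mapped to which and using the symmetry of $p$ at the right moment. The remaining items are formal consequences of $\alpha\circ\alpha=1_X$ and order reversal.
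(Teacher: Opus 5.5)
Items (1)--(3) are correct and follow essentially the same argument as the paper. Your treatment of (3), including the remark that the non-strict case is automatic, is fine.

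Item (4) has a genuine error. The identity you propose to prove by induction,
\[
a_n'(x,y)=\alpha\bigl(h_n(\alpha x,\alpha y)\bigr),\qquad h_n'(x,y)=\alpha\bigl(a_n(\alpha x,\alpha y)\bigr),
\]
is false. Your base case is miscomputed: $\alpha\bigl(h_1(\alpha x,\alpha y)\bigr)=\alpha\bigl(p^\alpha(\alpha x,\alpha y)\bigr)=\alpha\bigl(\alpha(p(x,y))\bigr)=p(x,y)$, whereas $a_1'(x,y)=p^\alpha(x,y)$. For $p=\vee$ on a decreasing lattice this would give $\vee=\wedge$.

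The inductive step also does not close. Inserting your hypothesis into $a_{n+1}'=p^\alpha(a_n',h_n')$ and using symmetry yields $\alpha\bigl(a_{n+1}(\alpha x,\alpha y)\bigr)$, which is not the claimed $\alpha\bigl(h_{n+1}(\alpha x,\alpha y)\bigr)$. In fact, the relation $a_n(\alpha x,\alpha y)=\alpha(h_n(x,y))$ holds for $n\ge 1$ (the paper uses it in the proof of Theorem~\ref{t:Babylonian-mean-implies-equivariant-mean}). Combined with it, your formula says $a_n'=a_n$: that $p$ and $p^\alpha$ generate the same sequences.

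The mistake is that you both interchanged the two sequences and conjugated by $\alpha$. Each operation alone is correct, but together they cancel. There are two easy repairs.

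\textbf{Interchange only (the paper's route).} By (1) and symmetry, $a_1'=p^\alpha=h_1$ and $h_1'=(p^\alpha)^\alpha=p=a_1$. Inductively, $a_n'=h_n$ and $h_n'=a_n$ for all $n\ge1$. So the sequences of $p^\alpha$ are exactly those of $p$ with roles swapped, and (4) is immediate because the Babylonian condition is symmetric in the two sequences.

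\textbf{Conjugation only.} The identities $a_n'(x,y)=\alpha\bigl(a_n(\alpha x,\alpha y)\bigr)$ and $h_n'(x,y)=\alpha\bigl(h_n(\alpha x,\alpha y)\bigr)$ hold for all $n\ge 0$. The base case is just $\alpha(\alpha x)=x$, and the induction does not even use symmetry. Your transfer argument then goes through with one correction: $\alpha$ reverses the pointwise monotonicity of each sequence individually (not $a_n\mapsto h_n'$). The map $(x,y)\mapsto(\alpha x,\alpha y)$ is a bijection of $X\times X$, and $\alpha$ is a homeomorphism, so convergence and equality of the limits carry over.

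Either fix is short, but as written the key identity in (4) is wrong.
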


\begin{proof}
For every $x,y\in X$, we have
\begin{align*}
\left(p^\alpha\right)^\alpha(x,y)
&=\alpha\bigl(
p^\alpha(\alpha(x),\alpha(y))
\bigr)\\
&=\alpha\Bigl(
\alpha\bigl(
p(\alpha(\alpha(x)),\alpha(\alpha(y)))
\bigr)
\Bigr)\\
&=p(x,y).
\end{align*}
This proves (1).

By (1),
\[
p^\alpha(x,y)\leq p(x,y)
\quad\Longleftrightarrow\quad
p^\alpha(x,y)\leq
\left(p^\alpha\right)^\alpha(x,y)
\]
for every $x,y\in X$. Thus, $p$ is arithmetic if and only if
$p^\alpha$ is harmonic, proving (2).
To prove (3), suppose first that $p$ is an $\mathrm{RS}$-order-preserving mean. Let
$x,y\in X$ with $x<y$. Since $\alpha$ is a decreasing involution,
\[
\alpha(y)<\alpha(x).
\]
Therefore, by the symmetry and the $\mathrm{RS}$-order-preserving property of $p$,
we obtain
\[
\alpha(y)
\leq p\bigl(\alpha(y),\alpha(x)\bigr)
=p\bigl(\alpha(x),\alpha(y)\bigr)
<\alpha(x).
\]
Applying $\alpha$ gives
\[
x<
\alpha\bigl(p(\alpha(x),\alpha(y))\bigr)
\leq y.
\]
Thus, $x<p^\alpha(x,y)\leq y$, and therefore $p^\alpha$ is an $\mathrm{LS}$-order-preserving mean.

Conversely, suppose that $p^\alpha$ is an $\mathrm{LS}$-order-preserving mean. If
$x<y$, then $\alpha(y)<\alpha(x)$, and hence
\[
\alpha(y)
<
p^\alpha\bigl(\alpha(y),\alpha(x)\bigr)
\leq\alpha(x).
\]
Since $p^\alpha$ is symmetric and $p^\alpha\bigl(\alpha(x),\alpha(y)\bigr)
=\alpha\bigl(p(x,y)\bigr)$, applying $\alpha$ yields $x\leq p(x,y)<y$. Therefore, $p$ is an $\mathrm{RS}$-order-preserving mean.

(4) Let $(a_n)_{n\geq 1}$ and $(h_n)_{n\geq 1}$ be the sequences of
maps associated with $p$, and let $(\widetilde a_n)_{n\geq 1}$ and
$(\widetilde h_n)_{n\geq 1}$ be those associated with $p^\alpha$.
Using (1), the symmetry of $p$ and $p^\alpha$, and the recursive
definitions of these maps, an induction gives
\[
\widetilde a_n=h_n
\qquad\text{and}\qquad
\widetilde h_n=a_n
\]
for every $n\geq 1$. Thus, the two sequences associated with
$p^\alpha$ are precisely those associated with $p$ with their roles
interchanged. Consequently, $p$ is Babylonian if and only if
$p^\alpha$ is Babylonian.
\end{proof}

It follows from Lemma~\ref{l:properties-dual-mean} that every
$\mathbb Z_2$-decreasing poset admitting an arithmetic (respectively,
harmonic) mean also admits a harmonic (respectively, arithmetic) mean.
Likewise, such a poset admits an $\mathrm{RS}$-order-preserving mean if and only if
it admits an $\mathrm{LS}$-order-preserving mean. The following example shows that
every $\mathbb Z_2$-decreasing lattice admits means satisfying each of
these properties.

\begin{example}\label{ex:lattice-means}
Let $(X,\alpha,\leq,\wedge,\vee)$ be a
$\mathbb Z_2$-decreasing lattice. Then the lattice operations
\[
\vee,\wedge:X\times X\rightarrow X
\]
are, respectively, an $\mathrm{LS}$-order-preserving arithmetic mean and an
$\mathrm{RS}$-order-preserving harmonic mean. \(\blacktriangleleft\)
\end{example}

The following theorem shows that, on a locally convex
$\mathbb Z_2$-decreasing lattice, the common limit determined by a
Babylonian mean defines an equivariant mean.

\begin{theorem}\label{t:Babylonian-mean-implies-equivariant-mean}
Let $(X,\alpha,\leq,\wedge,\vee)$ be a
$\mathbb Z_2$-decreasing lattice that is locally convex with respect to
$\leq$, and let $q:X\times X\rightarrow X$ be a Babylonian mean. Then
the common limit
\[
p(x,y):=
\lim_{n\rightarrow\infty}a_n(x,y)
=
\lim_{n\rightarrow\infty}h_n(x,y),
\]
where $(a_n)$ and $(h_n)$ are the sequences of Definition~\ref{def:arithmetic-harmonic-Babylonian}-(3) associated with $q$, defines a $\mathbb Z_2$-equivariant mean
$p:X\times X\rightarrow X$.

Additionally, if $X$ is an $\mathrm{AR}$ ($\mathrm{AE}$), then $(X,\alpha)$ is a
$\mathbb Z_2$-$\mathrm{AR}$ ($\mathbb Z_2$-$\mathrm{AE}$).
\end{theorem}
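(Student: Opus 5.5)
We must show that $p$ is symmetric, idempotent, equivariant, and continuous. The last assertion then follows from Corollary~\ref{c:equivariant mean implies Z2AR}.

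\emph{Algebraic properties.} Since $q$ and $q^\alpha$ are symmetric, an induction gives $a_n(x,y)=a_n(y,x)$ and $h_n(x,y)=h_n(y,x)$ for all $n\ge 1$. Hence $p$ is symmetric. Since $q(x,x)=x$ and $q^\alpha(x,x)=\alpha(\alpha(x))=x$, both sequences are constant on the diagonal, so $p(x,x)=x$.

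For equivariance we argue as in the proof of Lemma~\ref{l:properties-dual-mean}(4). We show by induction that
\[
a_n(\alpha(x),\alpha(y))=\alpha\bigl(h_n(x,y)\bigr)
\quad\text{and}\quad
h_n(\alpha(x),\alpha(y))=\alpha\bigl(a_n(x,y)\bigr)
\quad\text{for } n\ge 1.
\]
The base case is $a_1(\alpha x,\alpha y)=q(\alpha x,\alpha y)=\alpha(q^\alpha(x,y))=\alpha(h_1(x,y))$, and similarly for $h_1$. The inductive step uses the symmetry of $q$ and $q^\alpha$ in the same way. Passing to the limit and using the continuity of $\alpha$ gives $p(\alpha x,\alpha y)=\alpha(p(x,y))$.

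\emph{Continuity (main obstacle).} The idea is a Dini-type squeeze. Suppose, say, that $(a_n)$ is pointwise decreasing and $(h_n)$ is pointwise increasing; the other case is symmetric. Then for every $n\ge1$ and every $(x,y)$,
\[
h_n(x,y)\le p(x,y)\le a_n(x,y).
\]
The lower bound holds because $h_n\le h_m\le a_m$ for $m\ge n$, and $\{z: h_n\le z\le a_m\}$-type order intervals are closed in a topological lattice, since $\{(u,v):u\le v\}=\{(u,v):u\wedge v=u\}$ is closed. Hence the limit satisfies $h_n\le p\le a_n$. Here we also need $h_m\le a_m$. I would try to derive this from the convergence to a common limit: $a_m\ge p\ge h_m$ follows from monotonicity together with closedness of the order. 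Indeed, $a_m\ge a_k\to p$ gives $a_m\ge p$, and $h_m\le h_k\to p$ gives $h_m\le p$.

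Now fix $(x_0,y_0)$ and a neighborhood $W$ of $z_0=p(x_0,y_0)$. By local convexity, choose a convex open set $V$ with $z_0\in V\subseteq W$. Since $a_n(x_0,y_0)\to z_0$ and $h_n(x_0,y_0)\to z_0$, fix $N$ with $a_N(x_0,y_0),h_N(x_0,y_0)\in V$. The maps $a_N$ and $h_N$ are finite compositions of $q$ and $q^\alpha$, hence continuous. So there is a neighborhood $U$ of $(x_0,y_0)$ with $a_N(U)\cup h_N(U)\subseteq V$.

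For $(x,y)\in U$ we have $h_N(x,y)\le p(x,y)\le a_N(x,y)$ with both endpoints in the convex set $V$. Therefore $p(x,y)\in[h_N(x,y);a_N(x,y)]\subseteq V\subseteq W$, which proves that $p$ is continuous at $(x_0,y_0)$.

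The delicate points are the inequality $h_n\le p\le a_n$, which rests on the closedness of $\le$ in a Hausdorff topological lattice, and the use of a convex basic neighborhood. Once $p$ is known to be an equivariant mean, Corollary~\ref{c:equivariant mean implies Z2AR} yields that $(X,\alpha)$ is a $\mathbb Z_2$-AR (respectively, a $\mathbb Z_2$-AE) whenever $X$ is an AR (respectively, an AE).
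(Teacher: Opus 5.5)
Your proposal is correct and follows the paper's proof essentially step by step. You use the same inductive identities for symmetry, idempotence and equivariance, and the same sandwich $h_m(x,y)\le p(x,y)\le a_m(x,y)$, obtained by letting $k\to\infty$ in $h_m\le h_k$ and $a_k\le a_m$ (the paper phrases your closedness-of-order step as passing from $h_m\wedge h_k=h_m$ to $h_m\wedge p=h_m$). Continuity then follows, as in the paper, from the convex-neighborhood argument; your initial detour through $h_m\le a_m$ is unnecessary, since the closedness argument you then give yields both bounds directly.
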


\begin{proof}
Let us consider the case in which $(a_n)_{n\geq 1}$ is monotonically
decreasing and $(h_n)_{n\geq 1}$ is monotonically increasing. The other
case is analogous, with the roles of the two sequences interchanged.

We first verify that $p$ satisfies conditions (M1) and (M2) and that it
is equivariant. Since both $q$ and
$q^\alpha$ are means, an inductive argument shows that
\[
a_n(x,x)=h_n(x,x)=x
\]
for every $x\in X$ and every $n\geq 0$. Consequently, $p(x,x)=x$ for every $x\in X$.

Moreover, $a_1=q$ and $h_1=q^\alpha$ are symmetric. The recursive
definition of $a_n$ and $h_n$ shows, by induction, that both
$a_n$ and $h_n$ are symmetric for every $n\geq 1$, and therefore $p$
is symmetric.

To prove that $p$ is equivariant, we use the identities
\[
a_n\bigl(\alpha(x),\alpha(y)\bigr)
=\alpha\bigl(h_n(x,y)\bigr) \quad\text{and}\quad h_n\bigl(\alpha(x),\alpha(y)\bigr)
=\alpha\bigl(a_n(x,y)\bigr),
\]
which hold for every $n\geq 1$ and every $x,y\in X$, as can be verified
by induction on $n$. Hence, using the continuity of $\alpha$, we obtain

\begin{align*}
p\bigl(\alpha(x),\alpha(y)\bigr)
&=\lim_{n\rightarrow\infty}
  a_n\bigl(\alpha(x),\alpha(y)\bigr)\\
&=\lim_{n\rightarrow\infty}\alpha\bigl(h_n(x,y)\bigr)\\
&=\alpha\left(\lim_{n\rightarrow\infty}h_n(x,y)\right)\\
&=\alpha\bigl(p(x,y)\bigr).
\end{align*}
Thus, $p$ is $\mathbb Z_2$-equivariant.

It remains to prove that $p$ is continuous. First, notice that $a_n$ and
$h_n$ are continuous for every $n\geq 0$. Indeed, $a_0$ and $h_0$ are
the coordinate projections, and the conclusion follows inductively
from the continuity of $q$ and $q^\alpha$.

We claim that
\[
h_m(x,y)\leq p(x,y)\leq a_m(x,y)
\]
for every $m\geq 1$ and every $(x,y)\in X\times X$. Fix $m\geq 1$ and
$(x,y)\in X\times X$. Since $(h_n)_{n\geq 1}$ is monotonically
increasing,
\[
h_m(x,y)\wedge h_n(x,y)=h_m(x,y)
\]
whenever $n\geq m$. Taking the limit as $n\rightarrow\infty$ and using
the continuity of $\wedge$, we obtain
\[
h_m(x,y)\wedge p(x,y)=h_m(x,y),
\]
and hence \(h_m(x,y)\leq p(x,y)\).

An analogous argument, using $\vee$ and the fact that
$(a_n)_{n\geq 1}$ is monotonically decreasing, shows that
$p(x,y)\leq a_m(x,y)$.  Therefore,
\[
p(x,y)\in [h_m(x,y);a_m(x,y)]
\]
as desired.

Now, fix $(u,w)\in X\times X$, and let $U\subseteq X$ be an open
neighborhood of $p(u,w)$. By local convexity, we may choose an open
convex set $V$ with $p(u,w)\in V\subseteq U$.

The convergence of $\bigl(a_n(u,w)\bigr)$ and
$\bigl(h_n(u,w)\bigr)$ allows us to choose $N\geq 1$ with both
$a_N(u,w)$ and $h_N(u,w)$ in $V$. Since $a_N$ and $h_N$ are continuous,
there exists an open neighborhood $W$ of $(u,w)$ in $X\times X$ for
which $a_N(x,y),h_N(x,y)\in V$ whenever $(x,y)\in W$. The convexity of $V$ now implies that
\[
p(x,y)\in[h_N(x,y);a_N(x,y)]\subseteq V\subseteq U
\]
for every $(x,y)\in W$. Thus, $p$ is continuous.

We conclude that $p$ is an equivariant mean. Finally, if $X$ is an $\mathrm{AR}$ ($\mathrm{AE}$),
Corollary~\ref{c:equivariant mean implies Z2AR}  implies that
$(X,\alpha)$ is a $\mathbb Z_2$-$\mathrm{AR}$ ( $\mathbb Z_2$-$\mathrm{AE}$).
\end{proof}

Combining Lemma~\ref{l:locally convex} with
Theorem~\ref{t:Babylonian-mean-implies-equivariant-mean}, we immediately
obtain the following corollary.

\begin{corollary}\label{c:Babylonian-mean-compact-lattice}
Let $(X,\alpha,\leq,\wedge,\vee)$ be a
$\mathbb Z_2$-decreasing lattice whose underlying space is either
compact, or locally compact and connected. If $X$ admits a Babylonian
mean, then it admits an equivariant mean. If, in addition, $X$ is an $\mathrm{AR}$ ($\mathrm{AE}$),
then $(X,\alpha)$ is a $\mathbb Z_2$-$\mathrm{AR}$ ($\mathbb Z_2$-$\mathrm{AE}$).
\end{corollary}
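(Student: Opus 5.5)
The corollary follows by combining Lemma~\ref{l:locally convex} with Theorem~\ref{t:Babylonian-mean-implies-equivariant-mean}; the only work is checking that the hypotheses of the theorem are met.

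First, $(X,\leq,\wedge,\vee)$ is a topological lattice, since it is the underlying lattice of a $\mathbb Z_2$-decreasing lattice. Its underlying space is compact, or locally compact and connected. In either case Lemma~\ref{l:locally convex} shows that $X$ is locally convex with respect to $\leq$. The Dini property supplied by that lemma is not needed here, because the Babylonian hypothesis already guarantees convergence of the sequences.

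Second, let $q\colon X\times X\to X$ be the given Babylonian mean. Theorem~\ref{t:Babylonian-mean-implies-equivariant-mean} then applies verbatim to $(X,\alpha,\leq,\wedge,\vee)$ and $q$. The common limit $p(x,y)=\lim_n a_n(x,y)=\lim_n h_n(x,y)$ is a $\mathbb Z_2$-equivariant mean.

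Third, if $X$ is moreover an $\mathrm{AR}$ (respectively, an $\mathrm{AE}$), the last clause of the same theorem gives the conclusion. Equivalently, Corollary~\ref{c:equivariant mean implies Z2AR} applied to $p$ shows that $(X,\alpha)$ is a $\mathbb Z_2$-$\mathrm{AR}$ (respectively, a $\mathbb Z_2$-$\mathrm{AE}$).

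No real obstacle arises. The one point to state explicitly is that local convexity is a property of the topological lattice alone and does not depend on $\alpha$, so Lemma~\ref{l:locally convex} applies without modification.
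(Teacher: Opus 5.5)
Your proposal is correct and follows the paper's own argument: Lemma~\ref{l:locally convex} supplies local convexity, and Theorem~\ref{t:Babylonian-mean-implies-equivariant-mean} (together with Corollary~\ref{c:equivariant mean implies Z2AR}) gives the equivariant mean and the $\mathbb Z_2$-$\mathrm{AR}$ ($\mathbb Z_2$-$\mathrm{AE}$) conclusion. Your remark that the Dini part of the lemma is not needed is also accurate, since the Babylonian hypothesis already provides the convergence.
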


Before proving the next result, we stablish the following fact.

\begin{proposition}\label{p:duality-monotone-sequences}
Let $(X,\alpha,\leq,\wedge,\vee)$ be a
$\mathbb Z_2$-decreasing lattice. Then the following properties hold.

\begin{enumerate} [label=\upshape(\arabic*)]
\item If $X$ is countably Dini, then every monotonically
decreasing sequence in $X$ that has an infimum converges to its
infimum.

\item The following conditions are equivalent:
    \begin{enumerate}
    \item[\upshape(a)] $X$ is $\sigma$-monotonically conditionally complete;
    \item[\upshape(b)] every monotonically increasing sequence in $X$ that is
    bounded from above has a supremum;
    \item[\upshape(c)] every monotonically decreasing sequence in $X$ that is
    bounded from below has an infimum.
    \end{enumerate}
\end{enumerate}
\end{proposition}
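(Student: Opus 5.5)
The whole proof rests on the duality supplied by $\alpha$. Since $\alpha$ is a continuous involution that reverses the order, it is an order anti-isomorphism of $X$ and a homeomorphism. So it carries monotonically decreasing sequences to monotonically increasing ones, lower bounds to upper bounds, and infima to suprema. Precisely: if $(x_n)$ is decreasing with infimum $s$, then $(\alpha(x_n))$ is increasing with supremum $\alpha(s)$. I would check this directly. From $s\le x_n$ we get $\alpha(x_n)\le\alpha(s)$, so $\alpha(s)$ is an upper bound. If $u$ is any upper bound of $(\alpha(x_n))$, then $\alpha(u)\le x_n$ for all $n$, so $\alpha(u)\le s$ and hence $\alpha(s)\le u$. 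The converse statement follows by applying the same argument with $\alpha$ again, since $\alpha\circ\alpha=1_X$.

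For (1), let $(x_n)$ be decreasing with infimum $s$. By the observation above, $(\alpha(x_n))$ is increasing with supremum $\alpha(s)$. The countably Dini property then gives $\alpha(x_n)\to\alpha(s)$. Applying the continuous map $\alpha$ yields $x_n=\alpha(\alpha(x_n))\to\alpha(\alpha(s))=s$.

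For (2), the implications (a)$\Rightarrow$(b) and (a)$\Rightarrow$(c) are immediate from the definition of $\sigma$-monotone conditional completeness. It is enough to prove (b)$\Rightarrow$(c) and (c)$\Rightarrow$(b), because then (b) gives (c), and together they give (a). For (b)$\Rightarrow$(c), take $(x_n)$ decreasing and bounded below by some $\ell$. Then $(\alpha(x_n))$ is increasing and bounded above by $\alpha(\ell)$. By (b) it has a supremum $t$, and the dual of the observation above shows that $\alpha(t)$ is the infimum of $(x_n)$. The implication (c)$\Rightarrow$(b) is symmetric.

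None of these steps is a real obstacle. The only point needing care is the order-duality lemma, namely that $\alpha$ exchanges suprema and infima of sequences, and that is the argument written out in the first paragraph. Note that the topology is used only in (1), and only through the continuity of $\alpha$.
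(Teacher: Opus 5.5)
Your proposal is correct and follows essentially the same route as the paper: both transport monotone sequences through the order anti-isomorphism $\alpha$, obtain (1) from the countably Dini property together with the continuity of $\alpha$, and prove (b)$\Leftrightarrow$(c) by showing that $\alpha$ applied to the supremum of $(\alpha(x_n))$ is the infimum of $(x_n)$, and dually. One small correction to your closing remark: part (1) also uses the topology through the countably Dini hypothesis itself, not only through the continuity of $\alpha$.
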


\begin{proof}
For (1), let $(x_n)_{n\geq1}$ be monotonically decreasing and suppose
that $x=\inf_{n\geq1}x_n.$ Since $\alpha$ is a decreasing involution, the sequence
$(\alpha(x_n))_{n\geq1}$ is monotonically increasing and
\[
\alpha(x)=\sup_{n\geq1}\alpha(x_n).
\]
The countably Dini property therefore implies that $\alpha(x_n)\longrightarrow\alpha(x).$ Applying the continuous map $\alpha$, we obtain $x_n\to x$.

For (2), condition (a) clearly implies both (b) and (c). Suppose now
that (b) holds, and let $(x_n)_{n\geq1}$ be a monotonically decreasing
sequence bounded from below by some $b\in X$. Then
$(\alpha(x_n))_{n\geq1}$ is monotonically increasing and bounded from
above by $\alpha(b)$. By (b), the element
\[
s=\sup_{n\geq1}\alpha(x_n)
\]
exists. We claim that $\alpha(s)=\inf_{n\geq1}x_n$. Indeed, the inequalities $\alpha(x_n)\leq s$ imply that
$\alpha(s)\leq x_n$ for every $n$, so $\alpha(s)$ is a lower bound of
$(x_n)$. If $y$ is any other lower bound, then
$y\leq x_n$ for every $n$, and hence
\[
\alpha(x_n)\leq\alpha(y).
\]
It follows that $s\leq\alpha(y)$ and therefore $y\leq\alpha(s)$.
Thus, $\alpha(s)$ is the infimum of $(x_n)$, and (c) holds.

Conversely, suppose that (c) holds, and let $(x_n)_{n\geq1}$ be a
monotonically increasing sequence bounded from above by some $b\in X$.
Then $(\alpha(x_n))_{n\geq1}$ is monotonically decreasing and bounded
from below by $\alpha(b)$. If $t=\inf_{n\geq1}\alpha(x_n)$, the same order-reversing argument shows that $\alpha(t)=\sup_{n\geq1}x_n$. Hence (b) holds, and the three conditions are equivalent.
\end{proof}

\begin{proposition}\label{p:Babylonian mean}
Let $(X,\alpha,\leq,\wedge,\vee)$ be a
$\mathbb Z_2$-decreasing lattice. Assume that $X$ is
$\sigma$-monotonically conditionally complete and countably Dini.
If $q:X\times X\rightarrow X$ is either an $\mathrm{RS}$-order-preserving
arithmetic mean or an $\mathrm{LS}$-order-preserving harmonic mean, then $q$ is
Babylonian.
\end{proposition}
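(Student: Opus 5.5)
By Lemma~\ref{l:properties-dual-mean}, $q$ is an RS-order-preserving arithmetic mean if and only if $q^\alpha$ is an LS-order-preserving harmonic mean. Also by Lemma~\ref{l:properties-dual-mean}-(4), $q$ is Babylonian if and only if $q^\alpha$ is. So it suffices to treat the case where $q$ is an RS-order-preserving arithmetic mean.

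\emph{Step 1: the ordering $h_n\le a_n$.} Fix $x,y$ and write $a_n=a_n(x,y)$, $h_n=h_n(x,y)$. Since $q$ is arithmetic,
\[
h_{n+1}=q^\alpha(a_n,h_n)\le q(a_n,h_n)=a_{n+1}\quad\text{for every } n\ge 0.
\]
Hence $h_n\le a_n$ for all $n\ge1$.

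\emph{Step 2: monotonicity.} For $n\ge1$ we have $h_n\le a_n$, and $q$ is order preserving and symmetric, so $h_n\le q(a_n,h_n)=a_{n+1}\le a_n$. By Lemma~\ref{l:properties-dual-mean}-(3), $q^\alpha$ is LS-order-preserving, and in particular order preserving. Therefore $h_n\le h_{n+1}\le a_n$. Combining with Step 1 gives the chain
\[
h_1\le h_n\le h_{n+1}\le a_{n+1}\le a_n\le a_1 .
\]
So $(a_n)_{n\ge1}$ is decreasing and bounded below by $h_1$, and $(h_n)_{n\ge1}$ is increasing and bounded above by $a_1$. These pointwise inequalities give exactly the monotonicity of the sequences of maps required in Definition~\ref{def:arithmetic-harmonic-Babylonian}.

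\emph{Step 3: convergence.} $\sigma$-monotone conditional completeness gives $s=\sup_n h_n$ and $i=\inf_n a_n$. Since every $h_m\le a_n$, we get $s\le i$. Countable Dini gives $h_n\to s$, and Proposition~\ref{p:duality-monotone-sequences}-(1) gives $a_n\to i$.

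\emph{Step 4: $s=i$, the main obstacle.} Continuity of $q$ and $q^\alpha$ lets us pass to the limit in the recursion, giving
\[
i=\lim a_{n+1}=q(i,s),\qquad s=q^\alpha(i,s).
\]
Suppose, for contradiction, that $s<i$. Right-strictness and symmetry give $s\le q(s,i)<i$, contradicting $q(i,s)=i$. Hence $s=i$, and $q$ is Babylonian.

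The delicate points are keeping track of which strictness is used and where. Right-strictness of $q$ is what forces $s=i$. The order-preserving property of $q^\alpha$, supplied by Lemma~\ref{l:properties-dual-mean}-(3), is what yields the monotonicity of $(h_n)$. The argument also relies on the limit passage in Step 4, which uses joint continuity of $q$ and $q^\alpha$ together with uniqueness of limits in the Hausdorff space $X$.
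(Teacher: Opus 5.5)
Your proof is correct and follows the paper's argument essentially step for step: the same reduction via Lemma~\ref{l:properties-dual-mean}, the same sandwiches $h_n\le a_{n+1}\le a_n$ and $h_n\le h_{n+1}\le a_n$, convergence via $\sigma$-monotone conditional completeness, countable Dini and Proposition~\ref{p:duality-monotone-sequences}-(1), and the same right-strictness contradiction at the limit. The differences are cosmetic. You obtain $s\le i$ directly from the order bounds, so the identity $s=q^\alpha(i,s)$ is never actually used, whereas the paper derives $h_\infty\le a_\infty$ from the limit identities and the arithmetic inequality. You also get the monotonicity of $(h_n)$ by citing part~(3) of the lemma, while the paper applies $\alpha$ by hand.
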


\begin{proof}

It is enough to consider the case in which $q$ is an
$\mathrm{RS}$-order-preserving arithmetic mean. Indeed, if $q$ is an
$\mathrm{LS}$-order-preserving harmonic mean, then
Lemma~\ref{l:properties-dual-mean} implies that $q^\alpha$ is an
$\mathrm{RS}$-order-preserving arithmetic mean, and that $q$ is Babylonian if and
only if $q^\alpha$ is Babylonian.

Consider the sequences of maps $(a_n)_{n\geq 1}$ and
$(h_n)_{n\geq 1}$ associated with $q$, as in
Definition~\ref{def:arithmetic-harmonic-Babylonian}. We will prove that
$(a_n)_{n\geq 1}$ is monotonically decreasing and
$(h_n)_{n\geq 1}$ is monotonically increasing, that both sequences
converge pointwise, and that
\[
\lim_{n\rightarrow\infty}a_n(x,y)
=
\lim_{n\rightarrow\infty}h_n(x,y)
\]
for every $x,y\in X$.

Fix $(x,y)\in X\times X$. Since $q$ is arithmetic, the recursive
definitions of $a_n$ and $h_n$ imply that
\[
h_n(x,y)\leq a_n(x,y)
\]
for every $n\geq 1$. Since $q$ is order preserving and symmetric, we
obtain
\begin{equation}\label{eq:decreasing-an}
\begin{aligned}
h_n(x,y)
&\leq q\bigl(h_n(x,y),a_n(x,y)\bigr)\\
&=a_{n+1}(x,y)
\leq a_n(x,y).
\end{aligned}
\end{equation}

Moreover, since $\alpha$ is decreasing, $\alpha\bigl(a_n(x,y)\bigr)
\leq
\alpha\bigl(h_n(x,y)\bigr)$. The fact that $q$ is order preserving implies
\[
\alpha\bigl(a_n(x,y)\bigr)
\leq
q\bigl(\alpha(a_n(x,y)),\alpha(h_n(x,y))\bigr)
\leq
\alpha\bigl(h_n(x,y)\bigr).
\]
Applying $\alpha$ to these inequalities, we obtain
\begin{equation}\label{eq:increasing-hn}
h_n(x,y)
\leq h_{n+1}(x,y)
\leq a_n(x,y).
\end{equation}
Since $(x,y)$ was arbitrary, inequialities \eqref{eq:decreasing-an} and
\eqref{eq:increasing-hn} show that $(a_n)_{n\geq 1}$ is monotonically
decreasing and $(h_n)_{n\geq 1}$ is monotonically increasing with
respect to the pointwise order. Furthermore, for every $n\geq 1$,
\[
h_1(x,y)\leq h_n(x,y)\leq a_n(x,y)\leq a_1(x,y).
\]
Thus, the increasing sequence $(h_n(x,y))_{n\geq 1}$ is bounded above,
whereas the decreasing sequence $(a_n(x,y))_{n\geq 1}$ is bounded
below. Since $X$ is $\sigma$-monotonically conditionally complete, the
elements
\[
h_\infty(x,y):=\sup_{n\geq 1}h_n(x,y)
\qquad\text{and}\qquad
a_\infty(x,y):=\inf_{n\geq 1}a_n(x,y)
\]
exist. Then, the countably Dini property in combination with Proposition~\ref{p:duality-monotone-sequences} gives
\[
\lim_{n\rightarrow\infty}h_n(x,y)=h_\infty(x,y) \qquad\text{and}\qquad \lim_{n\rightarrow\infty}a_n(x,y)=a_\infty(x,y).
\]
Therefore, it only remains to prove that
$a_\infty(x,y)=h_\infty(x,y)$. To prove this, notice first that the continuity of $q$ gives
\begin{equation} \label{eqainf}
\begin{aligned}
a_\infty(x,y)
&=\lim_{n\rightarrow\infty}a_{n+1}(x,y)\\
&=\lim_{n\rightarrow\infty}
q\bigl(a_n(x,y),h_n(x,y)\bigr)\\
&=q\left(
\lim_{n\rightarrow\infty}a_n(x,y),
\lim_{n\rightarrow\infty}h_n(x,y)
\right)\\
&=q\bigl(a_\infty(x,y),h_\infty(x,y)\bigr).
\end{aligned}
\end{equation}
Similarly, using the continuity of $q^\alpha$, we obtain
\begin{align*}
h_\infty(x,y)
&=\lim_{n\rightarrow\infty}h_{n+1}(x,y)\\
&=\lim_{n\rightarrow\infty}
q^\alpha\bigl(a_n(x,y),h_n(x,y)\bigr)\\
&=q^\alpha\left(
\lim_{n\rightarrow\infty}a_n(x,y),
\lim_{n\rightarrow\infty}h_n(x,y)
\right)\\
&=q^\alpha\bigl(a_\infty(x,y),h_\infty(x,y)\bigr).
\end{align*}
Since $q$ is arithmetic,
\[
h_\infty(x,y)=q^\alpha\bigl(a_\infty(x,y),h_\infty(x,y)\bigr)
\leq
q\bigl(a_\infty(x,y),h_\infty(x,y)\bigr)=a_\infty(x,y).
\]

Suppose that this inequality is strict. Since $q$ is an
$\mathrm{RS}$-order-preserving mean, we have
\[
q\bigl(h_\infty(x,y),a_\infty(x,y)\bigr)
<a_\infty(x,y).
\]
On the other hand, the symmetry of $q$ and (\ref{eqainf}) give
\[
q\bigl(h_\infty(x,y),a_\infty(x,y)\bigr)
=q\bigl(a_\infty(x,y),h_\infty(x,y)\bigr)
=a_\infty(x,y),
\]
which is a contradiction. Therefore, $a_\infty(x,y)=h_\infty(x,y)$. Since $(x,y)$ was arbitrary, we conclude that
\[
\lim_{n\rightarrow\infty}a_n(x,y)
=
\lim_{n\rightarrow\infty}h_n(x,y)
\]
for every $x,y\in X$. Hence, $q$ is Babylonian.
\end{proof}

\begin{remark}
Let $(X,\alpha,\leq,\wedge,\vee)$ be a
$\mathbb Z_2$-decreasing lattice containing points $x,y\in X$ with
$x<y$.

The mean $q=\vee$ is $\mathrm{LS}$-order preserving and arithmetic, while
$q^\alpha=\wedge$. For the sequences associated with $q$, we have
\[
a_1(x,y)=x\vee y=y
\qquad\text{and}\qquad
h_1(x,y)=x\wedge y=x.
\]
It follows that
\[
a_n(x,y)=y
\qquad\text{and}\qquad
h_n(x,y)=x
\]
for every $n\geq 1$. Hence, $\vee$ is not Babylonian.

Similarly, the mean $q=\wedge$ is $\mathrm{RS}$-order preserving and harmonic,
while $q^\alpha=\vee$. In this case,
\[
a_n(x,y)=x
\qquad\text{and}\qquad
h_n(x,y)=y
\]
for every $n\geq 0$, and therefore $\wedge$ is not Babylonian.

Consequently, the $\mathrm{RS}$ condition in the arithmetic case and the $\mathrm{LS}$
condition in the harmonic case in
Proposition~\ref{p:Babylonian mean} cannot be interchanged.
\end{remark}

\begin{theorem}\label{t:order-preserving mean implies equivariant mean}
Let $(X,\alpha,\leq,\wedge,\vee)$ be a
$\mathbb Z_2$-decreasing lattice. Assume that $X$ is locally convex
with respect to $\leq$, countably Dini, and
$\sigma$-monotonically conditionally complete.

If $X$ admits either an $\mathrm{RS}$-order-preserving arithmetic mean or an
$\mathrm{LS}$-order-preserving harmonic mean, then $X$ admits an equivariant mean.

If, in addition, $X$ is an $\mathrm{AR}$ (respectively, an $\mathrm{AE}$), then
$(X,\alpha)$ is a $\mathbb Z_2$-$\mathrm{AR}$ (respectively, a
$\mathbb Z_2$-$\mathrm{AE}$).
\end{theorem}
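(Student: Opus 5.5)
The plan is to chain Proposition~\ref{p:Babylonian mean} with Theorem~\ref{t:Babylonian-mean-implies-equivariant-mean}. Let $q:X\times X\to X$ be the given mean, either an $\mathrm{RS}$-order-preserving arithmetic mean or an $\mathrm{LS}$-order-preserving harmonic mean.

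First, since $X$ is a $\mathbb Z_2$-decreasing lattice that is $\sigma$-monotonically conditionally complete and countably Dini, Proposition~\ref{p:Babylonian mean} applies directly to $q$ in either case, and shows that $q$ is Babylonian. There is no need to pass to $q^\alpha$ ourselves: that reduction is already handled inside the proof of Proposition~\ref{p:Babylonian mean} via Lemma~\ref{l:properties-dual-mean}. Concretely, the sequences $(a_n)$ and $(h_n)$ attached to $q$ are pointwise monotone in opposite directions, and for each $(x,y)$ they converge to a common limit.

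Second, $X$ is locally convex with respect to $\leq$, so Theorem~\ref{t:Babylonian-mean-implies-equivariant-mean} applies to the Babylonian mean $q$. It yields that
\[
p(x,y)=\lim_{n\to\infty}a_n(x,y)=\lim_{n\to\infty}h_n(x,y)
\]
is a $\mathbb Z_2$-equivariant mean on $X$. This proves the first assertion.

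Finally, if $X$ is an $\mathrm{AR}$ (respectively, an $\mathrm{AE}$), the existence of this equivariant mean together with Corollary~\ref{c:equivariant mean implies Z2AR} (equivalently, the last clause of Theorem~\ref{t:Babylonian-mean-implies-equivariant-mean}) shows that $(X,\alpha)$ is a $\mathbb Z_2$-$\mathrm{AR}$ (respectively, a $\mathbb Z_2$-$\mathrm{AE}$).

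The argument is purely a composition of earlier results. The only point to check is that the hypotheses line up. Proposition~\ref{p:Babylonian mean} needs only completeness and the countable Dini property, not local convexity. Theorem~\ref{t:Babylonian-mean-implies-equivariant-mean} needs only local convexity and the Babylonian property. I expect no genuine obstacle beyond this bookkeeping.
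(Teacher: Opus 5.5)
Your proposal is correct and matches the paper's own proof: Proposition~\ref{p:Babylonian mean} shows the given mean is Babylonian, Theorem~\ref{t:Babylonian-mean-implies-equivariant-mean} turns it into an equivariant mean via local convexity, and Corollary~\ref{c:equivariant mean implies Z2AR} gives the $\mathbb Z_2$-$\mathrm{AR}$/$\mathrm{AE}$ conclusions. Your check of which hypothesis feeds which step is also accurate.
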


  \begin{proof}
By Proposition~\ref{p:Babylonian mean}, either of the means in the
hypothesis is Babylonian. The existence of an equivariant mean then
follows from
Theorem~\ref{t:Babylonian-mean-implies-equivariant-mean}. The remaining
assertions follow from
Corollary~\ref{c:equivariant mean implies Z2AR}.
\end{proof}

\begin{remark}\label{r:sigma-monotonically-complete}
Let $(X,\leq,\wedge,\vee)$ be a topological lattice. Then $X$ is
$\sigma$-monotonically conditionally complete whenever any of the
following conditions holds:
\begin{enumerate}
    \item $X$ is a complete lattice;
    \item $X$ is compact;
    \item every order interval in $X$ is compact.
\end{enumerate}
\end{remark}

\begin{proof}
Assertion (1) follows directly from the definitions.

For (2), \cite[Proposition VI-1.13(v)]{GierzEtAl}, applied to both
semilattice structures of $X$, implies that $X$ is a complete lattice.
Hence, assertion (1) applies.

Finally, suppose that every order interval in $X$ is compact. Each
order interval is a topological sublattice of $X$ and is therefore
complete by (2). Every monotonically increasing sequence bounded from
above is contained in an order interval of the form $[x_1;u]$, and its
supremum in this interval is also its supremum in $X$. The dual
argument applies to monotonically decreasing sequences. Thus, $X$ is
$\sigma$-monotonically conditionally complete.
\end{proof}

Combining Lemma~\ref{l:locally convex}, Theorem~\ref{t:order-preserving mean implies equivariant mean} and Remark~\ref{r:sigma-monotonically-complete}, we obtain the following corollary.

\begin{corollary}\label{c:locally-compact-lattice-equivariant-mean}
Let $(X,\alpha,\leq,\wedge,\vee)$ be a
$\mathbb Z_2$-decreasing lattice. Assume that $X$ is locally compact
and connected, and that every order interval $[x;y]$ in $X$ is
compact. If $X$ admits either an $\mathrm{RS}$-order-preserving arithmetic mean
or an $\mathrm{LS}$-order-preserving harmonic mean, then $X$ admits an
equivariant mean.

If, in addition, $X$ is an $\mathrm{AR}$ (respectively, an $\mathrm{AE}$), then
$(X,\alpha)$ is a $\mathbb Z_2$-$\mathrm{AR}$ (respectively, a
$\mathbb Z_2$-$\mathrm{AE}$).
\end{corollary}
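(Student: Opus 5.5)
The corollary follows by checking the three order-theoretic hypotheses of Theorem~\ref{t:order-preserving mean implies equivariant mean} and then applying that theorem.

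\emph{Local convexity and the countable Dini property.} Since $X$ is a topological lattice that is locally compact and connected, Lemma~\ref{l:locally convex} shows that $X$ is a Dini lattice and is locally convex with respect to $\leq$. As noted right after that lemma, every Dini lattice is in particular countably Dini.

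\emph{$\sigma$-monotone conditional completeness.} By hypothesis every order interval $[x;y]$ is compact. Condition (3) of Remark~\ref{r:sigma-monotonically-complete} then gives that $X$ is $\sigma$-monotonically conditionally complete.

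\emph{Conclusion.} All hypotheses of Theorem~\ref{t:order-preserving mean implies equivariant mean} now hold. Hence the existence of an $\mathrm{RS}$-order-preserving arithmetic mean, or of an $\mathrm{LS}$-order-preserving harmonic mean, yields an equivariant mean on $X$. The final assertion, that $(X,\alpha)$ is a $\mathbb Z_2$-$\mathrm{AR}$ (respectively a $\mathbb Z_2$-$\mathrm{AE}$) when $X$ is an $\mathrm{AR}$ (respectively an $\mathrm{AE}$), follows from the same theorem, or directly from Corollary~\ref{c:equivariant mean implies Z2AR}.

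The only point needing any care is that the compact-intervals clause of Remark~\ref{r:sigma-monotonically-complete} relies on each order interval being a topological sublattice. This is clear: for $u,v\in[x;y]$ we have $x\leq u\wedge v\leq u\vee v\leq y$. Beyond this check, the argument is a direct chaining of earlier results and I expect no genuine obstacle.
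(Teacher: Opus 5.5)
Your proposal is correct and is exactly the paper's argument: the paper also obtains this corollary by combining Lemma~\ref{l:locally convex} (local convexity and the Dini property, hence countably Dini), Remark~\ref{r:sigma-monotonically-complete}(3) (compact order intervals give $\sigma$-monotone conditional completeness), and Theorem~\ref{t:order-preserving mean implies equivariant mean}. Your extra check that order intervals are closed under $\wedge$ and $\vee$ is harmless and matches what the remark's proof uses.
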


\begin{corollary}\label{c:compact-lattice-equivariant-mean}
Let $(X,\alpha,\leq,\wedge,\vee)$ be a compact
$\mathbb Z_2$-decreasing lattice. If $X$ admits either an
$\mathrm{RS}$-order-preserving arithmetic mean or an $\mathrm{LS}$-order-preserving harmonic
mean, then $X$ admits an equivariant mean. Moreover:
\begin{enumerate}[label=\upshape(\arabic*)]
    \item if $X$ is an $\mathrm{AE}$, then $(X,\alpha)$ is a
    $\mathbb Z_2$-$\mathrm{AE}$;
    \item if $X$ is an $\mathrm{ANR}$, then $(X,\alpha)$ is a
    $\mathbb Z_2$-$\mathrm{AR}$.
\end{enumerate}
\end{corollary}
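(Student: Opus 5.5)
The argument combines Lemma~\ref{l:locally convex}, Remark~\ref{r:sigma-monotonically-complete}, Theorem~\ref{t:order-preserving mean implies equivariant mean} and Corollary~\ref{c:mejora ANR implica G-AR}.

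\emph{Existence of the equivariant mean.} Since $X$ is compact, Lemma~\ref{l:locally convex} shows that $X$ is a Dini lattice, hence countably Dini, and that $X$ is locally convex with respect to $\leq$. Remark~\ref{r:sigma-monotonically-complete}(2) shows that $X$ is $\sigma$-monotonically conditionally complete. All hypotheses of Theorem~\ref{t:order-preserving mean implies equivariant mean} therefore hold, so an $\mathrm{RS}$-order-preserving arithmetic mean or an $\mathrm{LS}$-order-preserving harmonic mean on $X$ yields an equivariant mean $p$.

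\emph{Assertion (1).} If $X$ is an $\mathrm{AE}$, Corollary~\ref{c:equivariant mean implies Z2AR}(1) applied to $p$ shows that $(X,\alpha)$ is a $\mathbb Z_2$-$\mathrm{AE}$.

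\emph{Assertion (2).} Here we only know that $X$ is an $\mathrm{ANR}$, so extra structure is needed to reach the $\mathrm{AR}$ property. I would use Corollary~\ref{c:mejora ANR implica G-AR}(2) with $G=\mathbb Z_2$ and $n=2=|G|$. This requires $X$ to be a compact connected $\mathrm{ANR}$, and compactness is given.

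The main step is connectedness. A non-equivariant mean $m$ on a space forces connectedness of a nonempty space: for $x,y\in X$, the set $m(\{x\}\times X)$ is connected and contains $x$, as well as $m(x,y)$. By symmetry, $m(X\times\{y\})$ is connected and contains both $m(x,y)$ and $y$. So $x$ and $y$ lie in a common connected set, and $X$ is connected. This also covers the trivial case, since the empty set is connected.

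With $X$ a compact connected $\mathrm{ANR}$ carrying an equivariant $2$-mean, Corollary~\ref{c:mejora ANR implica G-AR}(2) shows that $(X,\alpha)$ is a $\mathbb Z_2$-$\mathrm{AR}$. If one prefers not to rely on the compact clause there, an alternative is to note that a compact connected $\mathrm{ANR}$ with a mean is an $\mathrm{AR}$ by the cited Eckmann-type result, and then apply Corollary~\ref{c:equivariant mean implies Z2AR}(2).
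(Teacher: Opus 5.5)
\textbf{Gap in the connectedness step.} Your proof of the existence of the equivariant mean and of (1) is exactly the paper's argument. You are also right that (2), via Corollary~\ref{c:mejora ANR implica G-AR}(2), needs $X$ to be connected; the paper cites that corollary without addressing this. But your connectedness argument does not work.

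The set $m(\{x\}\times X)$ is connected only if $\{x\}\times X\cong X$ is, which is what you are trying to prove. The claim itself is false: by Example~\ref{ex:lattice-means}, $\wedge$ is a mean on the discrete lattice $\{0,1\}$ with $0<1$. An equivariant mean does not help either. Take $X=\{-1,0,1\}\subset\mathbb R$ with $\alpha(x)=-x$: it is a compact $\mathbb Z_2$-decreasing distributive lattice and a compact $\mathrm{ANR}$, and $p(x,y)=(x\wedge y)\vee(0\wedge(x\vee y))$ is an equivariant mean on it. Yet $X$ is not an $\mathrm{AR}$. So connectedness cannot be read off from the mean produced by Theorem~\ref{t:order-preserving mean implies equivariant mean}; it must come from the hypothesis on $q$. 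Your Eckmann-type fallback has the same hole.

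\textbf{How to close it.} By Lemma~\ref{l:properties-dual-mean} you may assume $q$ is an $\mathrm{RS}$-order-preserving arithmetic mean.

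\emph{Step 1: $q$ is strict.} Suppose $x<y$ and $q(x,y)=x$. Then $\alpha(y)<\alpha(x)$, and
\[
\alpha(x)=\alpha\bigl(q(y,x)\bigr)=q^\alpha\bigl(\alpha(y),\alpha(x)\bigr)\le q\bigl(\alpha(y),\alpha(x)\bigr)<\alpha(x),
\]
a contradiction. This is precisely what rules out finite examples like the one above.

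\emph{Step 2: comparable points lie in a connected set.} Fix $x<y$ and a maximal chain $C$ in $[x;y]$ containing $x$ and $y$. The order of a topological lattice is closed, and $C$ is exactly the set of points of $[x;y]$ comparable with every element of $C$. Hence $C$ is closed and so a compact chain. Its subspace topology is therefore the order topology, and it is order-complete. Suppose $c<d$ in $C$ with no element of $C$ strictly between them. By strictness $c<q(c,d)<d$, and $q(c,d)$ is comparable with every element of $C$, contradicting maximality. So $C$ is densely ordered, hence connected.

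\emph{Step 3: conclusion.} Arbitrary $x,y$ are each joined to $x\wedge y$ in this way, so $X$ is connected. With that in hand, your appeal to Corollary~\ref{c:mejora ANR implica G-AR}(2) goes through.
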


\begin{proof}
By Lemma~\ref{l:locally convex}, $X$ is locally convex and Dini, and
hence countably Dini. Moreover,
Remark~\ref{r:sigma-monotonically-complete} implies that $X$ is
$\sigma$-monotonically conditionally complete. 
By Theorem~\ref{t:order-preserving mean implies equivariant mean}, $X$ admits an equivariant mean and if $X$ is an $\mathrm{AE}$, then it is a $\mathbb Z_2$-$\mathrm{AE}$. 
On the other hand, if $X$ is an $\mathrm{ANR}$, by Corollary~\ref{c:mejora ANR implica G-AR}, $X$ is a $\mathbb{Z}_2$-$\mathrm{AR}$. 
\end{proof}

\section{Applications}

\subsection{Decreasing involutions on the space of convex bodies of $\mathbb R^n$} \label{s:decreasing-involutions-convex-bodies}

Throughout this subsection, let $n\geq 2$. We denote by
$\mathcal K_{(0),b}^n$ the family of all compact convex subsets of
$\mathbb R^n$ that contain the origin in their interior. We equip this
space with the Hausdorff metric
\[
d_H(K,L)
=
\max\left\{
\sup_{x\in K}d(x,L),
\sup_{y\in L}d(y,K)
\right\}.
\]

Recall that the polar body of $K\in\mathcal K_{(0),b}^n$ is defined by
\[
K^\circ
=
\left\{
x\in\mathbb R^n:
\sup_{y\in K}\langle x,y\rangle\leq 1
\right\}.
\]
The polar mapping $K\longmapsto K^\circ$ is a continuous involution on $\mathcal K_{(0),b}^n$ and is decreasing
with respect to inclusion.

Consider the lattice operations
\[
K\wedge L:=K\cap L
\qquad\text{and}\qquad
K\vee L:=\operatorname{conv}(K\cup L).
\]
The join operation is continuous with respect to the Hausdorff metric
(see \cite[Proposition~2.4]{costantinikubis} and
\cite[Lemma~2.1]{sakaiyaguchi}). Since the polar mapping is a
decreasing involution, Lemma~\ref{l:semilattice implies lattice}
implies that the meet operation is also continuous. Thus, $\left(
\mathcal K_{(0),b}^n,
\subseteq,
\wedge,
\vee
\right)$ is a topological lattice.

We next stablish the topological properties of this lattice that will be
needed below. By \cite[Proposition~3.3(1)]{HiguerasJonard},
$\mathcal K_{(0),b}^n$ is an open subset of
$\mathcal K_0^n$ when both spaces are equipped with the
Attouch--Wets topology\footnote{For the precise definition of the Attouch--Wets metric and
the topology induced by it, see
\cite[Chapter~3, Section~3.1]{beer}.}. Moreover, \cite[Theorem 3.4]{HiguerasJonard} shows that
$\mathcal K_0^n$ is homeomorphic to the Hilbert cube. Since the
Attouch--Wets and Hausdorff topologies coincide on
$\mathcal K_{(0),b}^n$
(see \cite[Theorem~3.2]{sakaiyaguchi}), it follows that
$\mathcal K_{(0),b}^n$, equipped with the Hausdorff metric, is locally
compact and is an $\mathrm{ANR}$.

In addition, $\mathcal K_{(0),b}^n$ is contractible. Indeed, if
$\mathbb B$ denotes the closed Euclidean unit ball, the map
\[
F:
\mathcal K_{(0),b}^n\times[0,1]
\longrightarrow
\mathcal K_{(0),b}^n,
\qquad
F(K,t)=(1-t)K+t\mathbb B,
\]
is a contraction of $\mathcal K_{(0),b}^n$ to $\mathbb B$.
Consequently, $\mathcal K_{(0),b}^n$ is an $\mathrm{AR}$ and, in particular, is
connected.

We also claim that every order interval in
$\mathcal K_{(0),b}^n$ is compact. Let
$K,L\in\mathcal K_{(0),b}^n$ with $K\subseteq L$, and consider
\[
[K;L]
=
\left\{
C\in\mathcal K_{(0),b}^n:
K\subseteq C\subseteq L
\right\}.
\]
Let $(C_m)_{m\geq 1}$ be a sequence in $[K;L]$. Since every $C_m$ is
contained in the compact set $L$, the Blaschke selection theorem
\cite[Theorem~1.8.7]{Schneider} yields a subsequence
$(C_{m_j})_{j\geq 1}$ converging in the Hausdorff metric to a nonempty
compact convex set $C\subseteq L$. Moreover, since $K\subseteq C_{m_j}\subseteq L$ for every $j$, passing to the limit gives
\[
K\subseteq C\subseteq L.
\]
In particular, $C$ contains the origin in its interior, because
$K\subseteq C$. Hence $C\in[K;L]$. Thus $[K;L]$ is sequentially
compact and, since it is metrizable, it is compact.

Now, consider the Minkowski arithmetic mean
\[
A:
\mathcal K_{(0),b}^n\times
\mathcal K_{(0),b}^n
\longrightarrow
\mathcal K_{(0),b}^n,
\qquad
A(K,L)=\frac{K+L}{2}.
\]

For each $C\in\mathcal K_{(0),b}^n$, let
\[
h_C:\mathbb R^n\longrightarrow\mathbb R,
\qquad
h_C(u)=\sup_{x\in C}\langle x,u\rangle,
\]
denote the support function of $C$.

We claim that $A$ is a strict order-preserving mean. Suppose that
$K\subsetneq L$. Since Minkowski addition preserves inclusion and
$K$ and $L$ are convex, we have
\[
K=\frac{K+K}{2}
\subseteq
\frac{K+L}{2}
\subseteq
\frac{L+L}{2}=L.
\]

Moreover, since $K$ is properly contained in $L$, there exists
$u\in\mathbb S^{n-1}$ such that $h_K(u)<h_L(u)$ (see, e.g., \cite[Theorem~1.7.1]{Schneider}). Since the support
function of $\frac{K+L}{2}$ is given by
\[
h_{\frac{K+L}{2}}(x)
=
\frac{h_K(x)+h_L(x)}{2},
\]
the choice of $u$ guarantees that
\[
\begin{aligned}
h_K(u)
&=\frac{h_K(u)+h_K(u)}{2}\\
&<\frac{h_K(u)+h_L(u)}{2}
=h_{\frac{K+L}{2}}(u)\\
&<\frac{h_L(u)+h_L(u)}{2}
=h_L(u).
\end{aligned}
\]
Thus,
\[
K\neq \frac{K+L}{2}\neq L.
\]
Combining this with the inclusions above, we obtain
\[
K\subsetneq A(K,L)\subsetneq L.
\]
Hence, $A$ is a strict order-preserving mean and, in particular, an
$\mathrm{RS}$-order-preserving mean.

Now, let $\alpha:
\mathcal K_{(0),b}^n
\longrightarrow
\mathcal K_{(0),b}^n$ be an arbitrary involution that is decreasing with respect to
inclusion. By the De Morgan laws in
Lemma~\ref{l:decreasinglattices},
\[
\alpha(K\cap L)
=
\operatorname{conv}\bigl(\alpha(K)\cup\alpha(L)\bigr)
\]
for every $K,L\in\mathcal K_{(0),b}^n$. The representation result recalled in the introduction concerns the
larger space $\mathcal K_0^n$. For the present space, we use the
corresponding result of Böröczky and Schneider. By
\cite[Corollary]{BoroczkySchneider}, there exists a self-adjoint linear
isomorphism $T:\mathbb R^n\longrightarrow\mathbb R^n$
such that
\[
\alpha(K)=T(K^\circ),\quad\text{ for every  }K\in\mathcal K_{(0),b}^n.
\]

Next, we show that $A$ is an arithmetic mean with respect to
$\alpha$. Let
\[
H(K,L)
=
\left(
\frac{K^\circ+L^\circ}{2}
\right)^\circ
\]
be the harmonic mean associated with the polar involution. We claim
that
\[
A^\alpha(K,L)=H(K,L)
\]
for every $K,L\in\mathcal K_{(0),b}^n$. Indeed, let $T^*$ denote the adjoint of $T$ with respect to the
standard inner product on $\mathbb R^n$. By \cite[Chapter~IV, \S 2]{SchaeferWolff},
\[
T(C^\circ)
=
\left((T^*)^{-1}(C)\right)^\circ
\]
for every convex body $C$.  Set
\[
M:=\frac{K^\circ+L^\circ}{2}.
\]
Since $T$ is linear, it preserves Minkowski addition and positive
scalar multiplication. Hence,
\[
\frac{T(K^\circ)+T(L^\circ)}{2}
=
T\left(\frac{K^\circ+L^\circ}{2}\right)
=T(M).
\]
Therefore,
\begin{align*}
A^\alpha(K,L)
&=\alpha\bigl(A(\alpha(K),\alpha(L))\bigr)\\
&=
T\left(
\left[
\frac{T(K^\circ)+T(L^\circ)}{2}
\right]^\circ
\right)\\
&=T\bigl((T(M))^\circ\bigr)\\
&=
\left(
(T^*)^{-1}(T(M))
\right)^\circ\\
&=M^\circ\\
&=
\left(
\frac{K^\circ+L^\circ}{2}
\right)^\circ\\
&=H(K,L).
\end{align*}
By a result of Firey \cite{Firey} (see also
\cite[Theorem~2]{MilmanRotem}), the arithmetic and harmonic means
satisfy
\[
H(K,L)
=
\left(\frac{K^\circ+L^\circ}{2}\right)^\circ
\subseteq
\frac{K+L}{2}
=
A(K,L).
\]
It follows that
\[
A^\alpha(K,L)=H(K,L)\subseteq A(K,L),
\]
so $A$ is arithmetic with respect to $\alpha$.

Combining the preceding observations, we conclude that
$\mathcal K_{(0),b}^n$ is an $\mathrm{AR}$ and a locally compact and connected
topological lattice whose order intervals are compact. Moreover, it
admits an $\mathrm{RS}$-order-preserving arithmetic mean with respect to
$\alpha$. Corollary~\ref{c:locally-compact-lattice-equivariant-mean}
therefore applies and yields the following result.

\begin{theorem}\label{t:decreasing-involutions-convex-bodies}
Let $n\geq 2$ and let
\[
\alpha:
\mathcal K_{(0),b}^n
\longrightarrow
\mathcal K_{(0),b}^n
\]
be an involution that is decreasing with respect to inclusion. Then
$\mathcal K_{(0),b}^n$ admits an equivariant mean with respect to
$\alpha$. Moreover, $\bigl(\mathcal K_{(0),b}^n,\alpha\bigr)$
is a $\mathbb Z_2$-$\mathrm{AR}$.
\end{theorem}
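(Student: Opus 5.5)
The plan is to verify, one by one, the hypotheses of Corollary~\ref{c:locally-compact-lattice-equivariant-mean} for the $\mathbb Z_2$-decreasing lattice $\bigl(\mathcal K_{(0),b}^n,\alpha,\subseteq,\cap,\operatorname{conv}(\cdot\cup\cdot)\bigr)$, using the facts collected in the discussion preceding the statement. That corollary then gives both the equivariant mean and the $\mathbb Z_2$-$\mathrm{AR}$ conclusion at once.

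\emph{Step 1: the lattice structure.} I first check that $(\mathcal K_{(0),b}^n,\subseteq,\wedge,\vee)$ is a topological lattice. Continuity of $\vee$ is taken from the cited results of Costantini--Kubi\'s and Sakai--Yaguchi. Continuity of $\wedge$ then follows from Lemma~\ref{l:semilattice implies lattice}, applied with the polar involution. That lemma expresses $K\cap L$ as $(K^\circ\vee L^\circ)^\circ$, which is continuous in $(K,L)$. Since the order is inclusion and $\alpha$ is decreasing by hypothesis, $\bigl(\mathcal K_{(0),b}^n,\alpha,\subseteq,\wedge,\vee\bigr)$ is a $\mathbb Z_2$-decreasing lattice.

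\emph{Step 2: the topological hypotheses.} These come from the facts established above.
\begin{itemize}
\item The space is locally compact, being open in the Hilbert cube $\mathcal K_0^n$ under the coinciding Attouch--Wets and Hausdorff topologies.
\item It is an $\mathrm{ANR}$ for the same reason.
\item It is contractible via $F(K,t)=(1-t)K+t\mathbb B$. A contractible $\mathrm{ANR}$ is an $\mathrm{AR}$, and in particular it is connected.
\item Every order interval $[K;L]$ is compact by the Blaschke selection argument. The limit $C$ still contains the origin in its interior because $K\subseteq C$.
\end{itemize}

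\emph{Step 3: the mean.} I produce an $\mathrm{RS}$-order-preserving arithmetic mean with respect to $\alpha$, namely the Minkowski mean $A(K,L)=\tfrac{K+L}{2}$.

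Strict order preservation comes from support functions. If $K\subsetneq L$, some direction $u$ satisfies $h_K(u)<h_L(u)$. Since $h_{A(K,L)}=\tfrac12(h_K+h_L)$, this gives $K\subsetneq A(K,L)\subsetneq L$.

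For the arithmetic inequality, I invoke the Böröczky--Schneider representation $\alpha(K)=T(K^\circ)$ with $T$ a self-adjoint linear isomorphism. I then compute $A^\alpha$ using the identity $T(C^\circ)=((T^*)^{-1}C)^\circ$. With $M=\tfrac{K^\circ+L^\circ}{2}$, linearity of $T$ gives $A(\alpha(K),\alpha(L))=T(M)$. Hence
\[
A^\alpha(K,L)=T\bigl((TM)^\circ\bigr)=\bigl((T^*)^{-1}TM\bigr)^\circ=M^\circ=H(K,L),
\]
where $(T^*)^{-1}T=\mathrm{id}$ because $T=T^*$. Firey's inequality $H(K,L)\subseteq A(K,L)$ then says that $A$ is arithmetic.

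\emph{Main obstacle.} I expect the main difficulty to lie in this last step. The proof needs the right representation theorem for decreasing involutions on $\mathcal K_{(0),b}^n$, as opposed to $\mathcal K_0^n$, and self-adjointness of $T$ is exactly what cancels the twist $(T^*)^{-1}T$. Without it, $A^\alpha$ would be a twisted harmonic mean, and Firey's comparison would not apply directly.

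With Steps 1--3 in hand, Corollary~\ref{c:locally-compact-lattice-equivariant-mean} yields the equivariant mean and, since the space is an $\mathrm{AR}$, that $\bigl(\mathcal K_{(0),b}^n,\alpha\bigr)$ is a $\mathbb Z_2$-$\mathrm{AR}$.
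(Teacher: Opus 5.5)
Your proposal is correct and follows essentially the same route as the paper. You verify the hypotheses of Corollary~\ref{c:locally-compact-lattice-equivariant-mean}: continuity of $\cap$ via the polar involution and Lemma~\ref{l:semilattice implies lattice}, local compactness and the $\mathrm{ANR}$ property from openness in $\mathcal K_0^n$, contractibility giving $\mathrm{AR}$ and connectedness, and Blaschke compactness of order intervals; you then show that the Minkowski mean is a strict, arithmetic mean using the B\"or\"oczky--Schneider representation with self-adjoint $T$, the identity $T(C^\circ)=((T^*)^{-1}C)^\circ$, and Firey's inclusion $H\subseteq A$, exactly as the paper does.
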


\subsection{Convex functions and Legendre--Fenchel conjugation}
\label{ss:convex-functions-Legendre-involution}

Throughout this subsection, let $n\geq 1$. A convex function
$f:\mathbb R^n\to\mathbb R$ is said to be \textit{supercoercive} if
\[
\lim_{\|x\|\to\infty}\frac{f(x)}{\|x\|}=+\infty.
\]
Functions satisfying this condition are also called \textit{$1$-coercive}. We denote by
\[
\operatorname{Conv}_{\mathrm{sc}}(\mathbb R^n;\mathbb R)
\]
the family of all convex supercoercive functions
$f:\mathbb R^n\to\mathbb R$.

Since every finite convex function on $\mathbb R^n$ is continuous
\cite[Corollary~10.1.1]{Rockafellar}, every element of
$\operatorname{Conv}_{\mathrm{sc}}(\mathbb R^n;\mathbb R)$ belongs to
$C(\mathbb R^n,\mathbb R)$.

For $f\in\operatorname{Conv}_{\mathrm{sc}}(\mathbb R^n;\mathbb R)$,
its Legendre--Fenchel conjugate is defined by
\[
f^*(x)
:=
\sup_{y\in\mathbb R^n}
\bigl\{\langle x,y\rangle-f(y)\bigr\},
\qquad x\in\mathbb R^n.
\]
Recall that every proper lower semicontinuous convex function $f$ satisfies that $f^{*}$ is convex and $f^{**}=f$ (see, for example,
\cite[Theorem~11.1]{RockafellarWets}).
Moreover, such a function is supercoercive if and only if its conjugate
is finite everywhere
(see, e.g., \cite[Chapter~E, Section~1, Remark~1.3.10]
{HiriartUrrutyLemarechal}).

It follows that $f^*\in\operatorname{Conv}_{\mathrm{sc}}(\mathbb R^n;\mathbb R) $
whenever
$f\in\operatorname{Conv}_{\mathrm{sc}}(\mathbb R^n;\mathbb R)$.
Indeed, the supercoercivity of $f$ implies that
$f^{*}$ is finite, while $f^{**}=f$ being finite implies that $f^*$ is supercoercive. Consequently, the
Legendre--Fenchel transform
\[
\mathcal L:
\operatorname{Conv}_{\mathrm{sc}}(\mathbb R^n;\mathbb R)
\longrightarrow
\operatorname{Conv}_{\mathrm{sc}}(\mathbb R^n;\mathbb R),
\qquad
\mathcal L(f):=f^*,
\]
is well defined and satisfies $\mathcal L^2=\operatorname{id}$.

We equip $\operatorname{Conv}_{\mathrm{sc}}(\mathbb R^n;\mathbb R)$ with the compact-open topology inherited from
$C(\mathbb R^n,\mathbb R)$. This topology coincides with the topology
of uniform convergence on compact subsets
\cite[Theorem~43.7]{willard}. Moreover, since $\mathbb R^n$
is hemicompact, $C(\mathbb R^n,\mathbb R)$ endowed with this topology is
metrizable \cite[Exercise~43G(1)]{willard}. Consequently,
$\operatorname{Conv}_{\mathrm{sc}}(\mathbb R^n;\mathbb R)$ is also
metrizable.

For sequences of finite convex functions, epi-convergence is equivalent
to uniform convergence on compact subsets
\cite[Theorem~7.17]{RockafellarWets}. Hence, a
sequence in
$\operatorname{Conv}_{\mathrm{sc}}(\mathbb R^n;\mathbb R)$
epi-converges if and only if it converges in the compact-open
topology.\footnote{
Recall that epi-convergence of lower semicontinuous functions is defined
in terms of the Painlev\'e--Kuratowski convergence of their epigraphs.
See \cite[Chapter~7, Section~B, Definition~7.1]{RockafellarWets} or
\cite[Chapter~5, Definition~5.3.1]{beer}; for further equivalent
descriptions and related hyperspace topologies, see also \cite{beer}.}

The Legendre--Fenchel transform preserves epi-convergence
\cite[Theorem~11.34]{RockafellarWets}.
Consequently, $\mathcal L$ is sequentially continuous in the
compact-open topology. Since this topology is metrizable, $\mathcal L$
is continuous. Therefore, $\mathcal L$ is an involution and, consequently,
$\bigl(
\operatorname{Conv}_{\mathrm{sc}}(\mathbb R^n;\mathbb R),
\mathcal L
\bigr)
$
is a $\mathbb Z_2$-space.

We consider on
$\operatorname{Conv}_{\mathrm{sc}}(\mathbb R^n;\mathbb R)$ the
pointwise order
\[
f\leq g
\quad\Longleftrightarrow\quad
f(x)\leq g(x)
\quad\text{for every }x\in\mathbb R^n.
\]
It is well known that the Legendre--Fenchel transform reverses the
pointwise order  (see, e.g., 
\cite[Chapter~E, Section~1, Proposition~1.3.1-(vii)]
{HiriartUrrutyLemarechal}). 
Therefore,
\[
\bigl(
\operatorname{Conv}_{\mathrm{sc}}(\mathbb R^n;\mathbb R),
\mathcal L,\leq
\bigr)
\]
is a $\mathbb Z_2$-decreasing poset.

We next define a join operation on
$\operatorname{Conv}_{\mathrm{sc}}(\mathbb R^n;\mathbb R)$ by
\[
(f\vee g)(x):=\max\{f(x),g(x)\},
\qquad x\in\mathbb R^n.
\]
The function $f\vee g$ is convex
(\cite[Section~5, Theorem~5.5]{Rockafellar}) and finite everywhere.
Moreover, it is supercoercive, since $f\vee g\geq f$. Thus,
\[
f\vee g\in
\operatorname{Conv}_{\mathrm{sc}}(\mathbb R^n;\mathbb R).
\]

The operation $\vee$ is continuous. Indeed, since the space is metrizable, it
suffices to verify sequential continuity. Suppose that
$f_k\to f$ and $g_k\to g$ uniformly on compact subsets of
$\mathbb R^n$. Recall that, for any $a,b,c,d\in\mathbb R$,
\[
\left|\max\{a,b\}-\max\{c,d\}\right|
\leq
\max\{|a-c|,|b-d|\}.
\]
Therefore, for every compact subset $K\subseteq\mathbb R^n$ and $\varepsilon>0$, we can find  $N\in \mathbb{N}$ such that $\left|f_{k}\left(x\right)-f\left(x\right)\right|<\varepsilon$ and $\left|g_{k}\left(x\right)-g\left(x\right)\right|<\varepsilon$ for any $x\in K$ and $k\geq N$. Hence, 
\[
\begin{split}
\left|(f_{k}\vee g_{k})(x)-(f\vee g)(x)\right|&=\left|\max\left\{f_{k}\left(x\right),g_{k}\left(x\right)\right\}-\max\left\{f\left(x\right),g\left(x\right)\right\}\right|\\
& \leq \max\left\{\left|f_{k}\left(x\right)-f\left(x\right)\right|,\left|g_{k}\left(x\right)-g\left(x\right)\right|\right\}< \varepsilon.
\end{split}
\] 
We can now conclude that 
$f_k\vee g_k\to f\vee g$ uniformly on every compact subset of
$\mathbb R^n$, which proves that $\vee$ is continuous.

Consequently,
$\bigl(
\operatorname{Conv}_{\mathrm{sc}}(\mathbb R^n;\mathbb R),
\mathcal L,\leq,\vee
\bigr)
$
is a $\mathbb Z_2$-decreasing topological semilattice. By
Lemma~\ref{l:semilattice implies lattice}, it becomes a
$\mathbb Z_2$-decreasing lattice when endowed with the meet operation
\[
f\wedge g:=
\mathcal L\bigl(\mathcal L(f)\vee\mathcal L(g)\bigr)
=
(f^*\vee g^*)^*.
\]

\begin{remark}
In terms of epigraphs, the pointwise order corresponds to reverse
inclusion, while the join operation corresponds to intersection:
\[
f\leq g
\quad\Longleftrightarrow\quad
\operatorname{epi}g\subseteq\operatorname{epi}f,
\qquad
\operatorname{epi}(f\vee g)
=
\operatorname{epi}f\cap\operatorname{epi}g.
\]
\end{remark}

We first verify local convexity with respect to the pointwise order.

\begin{claim}
The lattice $\bigl(
\operatorname{Conv}_{\mathrm{sc}}(\mathbb R^n;\mathbb R),
\leq,\wedge,\vee
\bigr)
$ is locally convex with respect to $\leq$.
\end{claim}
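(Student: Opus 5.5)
We show that the basic neighborhoods of the compact-open topology (uniform convergence on compact sets) are already convex with respect to the pointwise order. For $f\in\operatorname{Conv}_{\mathrm{sc}}(\mathbb R^n;\mathbb R)$, a compact set $K\subseteq\mathbb R^n$ and $\varepsilon>0$, set
\[
V(f,K,\varepsilon):=\Bigl\{g\in\operatorname{Conv}_{\mathrm{sc}}(\mathbb R^n;\mathbb R): \sup_{x\in K}|g(x)-f(x)|<\varepsilon\Bigr\}.
\]
The compact-open topology coincides with the topology of uniform convergence on compacta. Hence the family of all such sets, as $f$, $K$ and $\varepsilon$ vary, forms a base; we may even restrict to $K$ ranging over the closed balls $r\mathbb B$, $r\in\mathbb N$.

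Each $V(f,K,\varepsilon)$ is open. Indeed, if $g\in V(f,K,\varepsilon)$ and $\delta:=\varepsilon-\sup_K|g-f|>0$, then $V(g,K,\delta)\subseteq V(f,K,\varepsilon)$. Here the supremum is attained and is strictly less than $\varepsilon$, because $K$ is compact and $g-f$ is continuous.

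The key step is convexity. Let $g_1,g_2\in V(f,K,\varepsilon)$ with $g_1\leq g_2$, and let $h$ satisfy $g_1\leq h\leq g_2$ pointwise. For every $x\in K$ we have
\[
f(x)-\varepsilon<g_1(x)\leq h(x)\leq g_2(x)<f(x)+\varepsilon,
\]
so $|h(x)-f(x)|<\max\{|g_1(x)-f(x)|,|g_2(x)-f(x)|\}$ holds in the weak sense. Taking suprema over the compact set $K$, where the maxima are attained, gives $\sup_K|h-f|\leq\max\{\sup_K|g_1-f|,\sup_K|g_2-f|\}<\varepsilon$. Therefore $[g_1;g_2]\subseteq V(f,K,\varepsilon)$, and the set is convex.

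The only point needing care is that the supremum over $K$ stays strictly below $\varepsilon$. The bound above handles this by passing through the maxima for $g_1$ and $g_2$, which are attained by compactness. Notice that the argument uses only the pointwise order and never the meet operation. Thus the explicit form $f\wedge g=(f^*\vee g^*)^*$ plays no role, and it suffices that the order intervals are taken in the pointwise order, which is the order of the lattice.
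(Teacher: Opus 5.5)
Your proof is correct and is essentially the paper's argument: both show that each basic neighborhood $U(f;K,\varepsilon)$ is order-convex by bounding $\sup_K|h-f|$ by $\max\{\sup_K|g_1-f|,\sup_K|g_2-f|\}<\varepsilon$ (the paper calls this maximum $\delta$). Your extra check that these sets are open is a detail the paper leaves implicit, and the pointwise inequality you wrote with $<$ should read $\le$, as your phrase ``in the weak sense'' indicates.
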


\begin{proof}
Since the topology under consideration is the topology of uniform
convergence on compact subsets, a neighborhood basis at
$f\in\operatorname{Conv}_{\mathrm{sc}}(\mathbb R^n;\mathbb R)$ is
given by the sets
\[
U(f;K,\varepsilon)
=
\left\{
g\in\operatorname{Conv}_{\mathrm{sc}}(\mathbb R^n;\mathbb R):
\sup_{x\in K}|g(x)-f(x)|<\varepsilon
\right\},
\]
where $K\subseteq\mathbb R^n$ is compact and $\varepsilon>0$.

We claim that every such neighborhood is convex with respect to the
pointwise order. Suppose that $g,j\in U(f;K,\varepsilon)$ and that
$g\leq h\leq j$ for some
$h\in\operatorname{Conv}_{\mathrm{sc}}(\mathbb R^n;\mathbb R)$. Set
\[
\delta
=
\max\left\{
\sup_{x\in K}|g(x)-f(x)|,
\sup_{x\in K}|j(x)-f(x)|
\right\}.
\]
Since $g,j\in U(f;K,\varepsilon)$, we have $\delta<\varepsilon$.
Moreover, for every $x\in K$,
\[
f(x)-\delta
\leq g(x)
\leq h(x)
\leq j(x)
\leq f(x)+\delta.
\]
It follows that
\[
\sup_{x\in K}|h(x)-f(x)|
\leq\delta<\varepsilon.
\]
Thus, $h\in U(f;K,\varepsilon)$. Hence every member of the neighborhood
basis above is convex with respect to $\leq$, and the lattice is locally
convex.
\end{proof}

\begin{claim}
The lattice $\bigl(
\operatorname{Conv}_{\mathrm{sc}}(\mathbb R^n;\mathbb R),
\leq,\wedge,\vee
\bigr)$ is $\sigma$-monotonically conditionally complete.
\end{claim}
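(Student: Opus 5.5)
By Proposition~\ref{p:duality-monotone-sequences}-(2), it suffices to verify condition (b): every monotonically increasing sequence bounded from above has a supremum. The dual statement about decreasing sequences then follows automatically through the Legendre--Fenchel involution $\mathcal L$. This choice is deliberate. For increasing sequences the natural candidate is simply the pointwise supremum, whereas for decreasing sequences the pointwise infimum of convex functions need not be convex in general, so (c) would be harder to attack directly.

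So let $(f_k)_{k\geq1}$ be increasing in $\operatorname{Conv}_{\mathrm{sc}}(\mathbb R^n;\mathbb R)$ with $f_k\leq g$ for some $g$ in the space. Define
\[
f(x):=\sup_{k\geq1}f_k(x)=\lim_{k\to\infty}f_k(x),\qquad x\in\mathbb R^n.
\]
I would then check that $f$ belongs to the space, in three steps.
\begin{enumerate}
\item $f$ is finite everywhere, since $f_1\leq f\leq g$.
\item $f$ is convex, as a pointwise supremum of convex functions (the same reference \cite[Theorem~5.5]{Rockafellar} used above for $\vee$).
\item $f$ is supercoercive, since $f\geq f_1$ and $f_1$ is supercoercive.
\end{enumerate}
Hence $f\in\operatorname{Conv}_{\mathrm{sc}}(\mathbb R^n;\mathbb R)$.

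It remains to show that $f$ is the least upper bound in the pointwise order restricted to this space. Clearly $f_k\leq f$ for every $k$. If $h$ in the space satisfies $f_k\leq h$ for all $k$, then taking the supremum pointwise gives $f\leq h$. Thus $f=\sup_k f_k$, and (b) holds. By Proposition~\ref{p:duality-monotone-sequences}-(2), the lattice is $\sigma$-monotonically conditionally complete.

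The argument has no real obstacle. The only subtle point is noticing that one should work with increasing sequences rather than decreasing ones, so that the candidate supremum stays inside the class without any convex-hull or closure operation. Reducing (c) to (b) via the duality lemma removes the need to handle infima, which in this lattice are given by $(f^*\vee g^*)^*$ rather than pointwise.
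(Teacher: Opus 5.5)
Your proof is correct and follows the paper's argument essentially verbatim. You take the pointwise supremum, obtain finiteness, convexity and supercoercivity from $f_1\leq f\leq g$, check that it is the least upper bound, and invoke Proposition~\ref{p:duality-monotone-sequences}-(2) for decreasing sequences.

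One correction to your side remark, which does not affect the proof: the pointwise limit of a \emph{decreasing} sequence of convex functions is convex, since the convexity inequality passes to the limit. So condition (c) could also be checked directly, and the infimum is again the pointwise one.
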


\begin{proof}
Let $(f_k)_{k\geq 1}$ be a monotonically increasing sequence in
$\operatorname{Conv}_{\mathrm{sc}}(\mathbb R^n;\mathbb R)$ that is
bounded from above by some
$u\in\operatorname{Conv}_{\mathrm{sc}}(\mathbb R^n;\mathbb R)$.
Define
\[
f(x)=\sup_{k\geq 1}f_k(x),
\qquad x\in\mathbb R^n.
\]
Since
\[
f_1(x)\leq f(x)\leq u(x)
\]
for every $x\in\mathbb R^n$, the function $f$ is finite everywhere.
Moreover, as a pointwise supremum of convex functions, $f$ is convex
(\cite[Theorem~5.5]{Rockafellar}). It is also supercoercive,
because $f_1\leq f$ and $f_1$ is supercoercive. Therefore,
\[
f\in\operatorname{Conv}_{\mathrm{sc}}(\mathbb R^n;\mathbb R).
\]

By its definition, $f$ is an upper bound of $(f_k)_{k\geq 1}$.
Furthermore, if
$v\in\operatorname{Conv}_{\mathrm{sc}}(\mathbb R^n;\mathbb R)$
is any other upper bound of this sequence, then
\[
f_k(x)\leq v(x)
\]
for every $k\geq1$ and every $x\in\mathbb R^n$. Taking the supremum
over $k$ gives $f\leq v$. Hence, $f=\sup_{k\geq1}f_k.$

Thus, every monotonically increasing sequence that is bounded from
above has a supremum. Since $\mathcal L$ is a decreasing involution,
Proposition~\ref{p:duality-monotone-sequences}-(2) shows that every monotonically
decreasing sequence bounded from below also has an infimum. Therefore,
the lattice is $\sigma$-monotonically conditionally complete.
\end{proof}

\begin{claim}
The lattice $\bigl(
\operatorname{Conv}_{\mathrm{sc}}(\mathbb R^n;\mathbb R),
\leq,\wedge,\vee
\bigr)$
is countably Dini.
\end{claim}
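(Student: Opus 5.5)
Let $(f_k)_{k\geq1}$ be a monotonically increasing sequence in $\operatorname{Conv}_{\mathrm{sc}}(\mathbb R^n;\mathbb R)$ that has a supremum $s$ in the lattice. The plan has two steps: first identify $s$ with the pointwise supremum, and then upgrade pointwise convergence to uniform convergence on compact sets.

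For the first step, note that $s$ is an upper bound, so the sequence is bounded above. The proof of the preceding claim then shows that the pointwise supremum $f(x)=\sup_k f_k(x)$ belongs to $\operatorname{Conv}_{\mathrm{sc}}(\mathbb R^n;\mathbb R)$ and is the least upper bound. Hence $s=f$, and $f_k\to f$ pointwise, monotonically from below.

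The second step is the main point. Since the space carries the topology of uniform convergence on compact sets, I must show that $f_k\to f$ uniformly on every compact $K\subseteq\mathbb R^n$. The natural tool is the classical Dini theorem. Every finite convex function on $\mathbb R^n$ is continuous (\cite[Corollary~10.1.1]{Rockafellar}), so $f$ and all $f_k$ are continuous. The functions $f-f_k$ are therefore continuous and nonnegative, they decrease monotonically to $0$ pointwise on the compact set $K$, and Dini's theorem gives uniform convergence on $K$.

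The key observation is that the limit $f$ is continuous, which here comes for free from convexity and finiteness. Without it, Dini's theorem would not apply. For this reason I do not expect a real obstacle, only care in justifying that the lattice supremum coincides with the pointwise one.

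As an alternative, one could invoke \cite[Theorem~7.17]{RockafellarWets} via epi-convergence, since monotone increasing sequences of lsc functions epi-converge to their pointwise supremum. The Dini argument is more direct, though.

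Finally, as a consequence one may note that, by Proposition~\ref{p:duality-monotone-sequences}-(1), decreasing sequences with an infimum also converge to it. This is not needed for the claim itself.
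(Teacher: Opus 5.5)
Your proof is correct and follows the paper's two-step structure. First you identify the lattice supremum with the pointwise supremum via the preceding claim, then you upgrade pointwise convergence to uniform convergence on compact sets. The difference is the tool in the second step.

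The paper invokes \cite[Theorem~10.8]{Rockafellar}: pointwise convergence of finite convex functions on $\mathbb R^n$ to a finite limit is automatically uniform on compact sets. That result uses convexity in an essential way but not monotonicity.

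You instead apply the classical Dini theorem to $f-f_k$. This needs monotonicity, which the hypothesis supplies, and continuity of the limit. From convexity you extract only that finite convex functions are continuous \cite[Corollary~10.1.1]{Rockafellar}.

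Your route is more elementary. It also works verbatim in any lattice of continuous functions, ordered pointwise and carrying the compact-open topology, in which the supremum of an increasing sequence is its pointwise supremum. The paper's route is shorter given the citation, and its key lemma would still apply without monotonicity. Either argument establishes the claim.
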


\begin{proof}
Let $(f_k)_{k\geq1}$ be a monotonically increasing sequence in
$\operatorname{Conv}_{\mathrm{sc}}(\mathbb R^n;\mathbb R)$ having a
supremum
$f\in\operatorname{Conv}_{\mathrm{sc}}(\mathbb R^n;\mathbb R)$.
By the proof of the preceding claim, $f(x)=\sup_{k\geq1}f_k(x)$
for every $x\in\mathbb R^n$, and therefore $f_k(x)$ converges to $f(x)$ pointwise.

Since $f_k$ and $f$ are finite convex functions, it follows from
\cite[Theorem~10.8]{Rockafellar} that $(f_k)$ converges
uniformly to $f$ on every compact subset of $\mathbb R^n$. Thus,
$f_k\to f$ in the compact-open topology, and the lattice is countably
Dini.
\end{proof}

\begin{claim}
The space $\operatorname{Conv}_{\mathrm{sc}}(\mathbb R^n;\mathbb R)$ is an $\mathrm{AR}$.
\end{claim}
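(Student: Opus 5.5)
The plan is to show that $\operatorname{Conv}_{\mathrm{sc}}(\mathbb R^n;\mathbb R)$ is a convex subset of the locally convex metrizable topological vector space $C(\mathbb R^n,\mathbb R)$ with the compact-open topology, and then apply Dugundji's extension theorem. That theorem says a convex subset of a metrizable locally convex space is an $\mathrm{AE}$ for metrizable spaces, hence an $\mathrm{AR}$, since the set is itself metrizable.

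First, I will recall that $C(\mathbb R^n,\mathbb R)$ with the topology of uniform convergence on compact sets is a Fréchet space. Its seminorms are $p_m(f)=\sup_{\|x\|\leq m}|f(x)|$, so it is metrizable and locally convex, and metrizability was already noted above via hemicompactness.

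Second, I will check convexity. Let $f,g\in\operatorname{Conv}_{\mathrm{sc}}(\mathbb R^n;\mathbb R)$ and $t\in[0,1]$. The function $(1-t)f+tg$ is finite and convex. For supercoercivity, observe that
\[
\frac{(1-t)f(x)+tg(x)}{\|x\|}=(1-t)\frac{f(x)}{\|x\|}+t\frac{g(x)}{\|x\|}\longrightarrow+\infty .
\]
This holds because both quotients tend to $+\infty$ and the coefficients are nonnegative with sum $1$. So the set is convex in $C(\mathbb R^n,\mathbb R)$, and its topology is the subspace topology, as fixed above.

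Third, I will invoke Dugundji's theorem to conclude that the space is an $\mathrm{AE}$ for metrizable spaces. Being metrizable, it is then an $\mathrm{AR}$.

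As a fallback, an alternative is to exhibit a contraction together with the $\mathrm{ANR}$ property. For example, $F(f,t)=(1-t)f+t\|\cdot\|^2/2$ is continuous, since linear operations are continuous in the topological vector space. Convexity alone, however, already settles the matter via Dugundji.

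The only point needing care, and the main obstacle, is that Dugundji's theorem requires local convexity of the ambient space, not mere metrizability. I expect to verify this directly from the seminorms $p_m$. The set need not be closed, but Dugundji's theorem does not require that.
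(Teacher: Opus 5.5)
Your proposal is correct and follows essentially the same route as the paper. The paper also shows that $\operatorname{Conv}_{\mathrm{sc}}(\mathbb R^n;\mathbb R)$ is a convex subset of the locally convex space $C(\mathbb R^n,\mathbb R)$, using the same supercoercivity computation, and then concludes that this metrizable convex set is an $\mathrm{AR}$ by Dugundji's theorem.
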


\begin{proof}
The space $C(\mathbb R^n,\mathbb R)$, endowed with the compact-open
topology, is a locally convex topological vector space
(see, e.g., \cite[Chapter~6, Remark~1]{sakai}). We claim that
$\operatorname{Conv}_{\mathrm{sc}}(\mathbb R^n;\mathbb R)$ is a convex
subset of this space.

Indeed, let
$f,g\in\operatorname{Conv}_{\mathrm{sc}}(\mathbb R^n;\mathbb R)$ and
let $t\in[0,1]$. The function
\[
(1-t)f+tg
\]
is finite and convex
(\cite[Theorem~5.2]{Rockafellar}). If $0<t<1$, then
\[
\frac{((1-t)f+tg)(x)}{\|x\|}
=
(1-t)\frac{f(x)}{\|x\|}
+
t\frac{g(x)}{\|x\|}
\longrightarrow+\infty
\]
as $\|x\|\to\infty$. Thus, $(1-t)f+tg$ is supercoercive. The cases
$t=0$ and $t=1$ are immediate, and hence $(1-t)f+tg
\in
\operatorname{Conv}_{\mathrm{sc}}(\mathbb R^n;\mathbb R)$
for every $t\in[0,1]$.

Therefore,
$\operatorname{Conv}_{\mathrm{sc}}(\mathbb R^n;\mathbb R)$ is a
metrizable convex subset of a locally convex topological vector space.
Thus, by Dugundji's theorem (\cite[Theorem~6.1.1]{sakai}), the space $\operatorname{Conv}_{\mathrm{sc}}(\mathbb R^n;\mathbb R)$
is an $\mathrm{AR}$, as desired. 
\end{proof}

\begin{claim}
The map
\[
\mathcal A:
\operatorname{Conv}_{\mathrm{sc}}(\mathbb R^n;\mathbb R)
\times
\operatorname{Conv}_{\mathrm{sc}}(\mathbb R^n;\mathbb R)
\longrightarrow
\operatorname{Conv}_{\mathrm{sc}}(\mathbb R^n;\mathbb R),\]
given by 
$\mathcal A(f,g)=\frac{f+g}{2},$ is a strict order-preserving arithmetic mean with respect to
$\mathcal L$.
\end{claim}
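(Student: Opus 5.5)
We must verify four things: $\mathcal A$ takes values in the space and is a mean, it is order preserving, it is strict on both sides, and it is arithmetic with respect to $\mathcal L$.

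\emph{Well-definedness and the mean axioms.} The convexity argument in the proof that $\operatorname{Conv}_{\mathrm{sc}}(\mathbb R^n;\mathbb R)$ is an $\mathrm{AR}$, applied with $t=\tfrac12$, shows that $\tfrac{f+g}{2}$ is again finite, convex and supercoercive. Symmetry and $\mathcal A(f,f)=f$ are immediate. For continuity, suppose $f_k\to f$ and $g_k\to g$ uniformly on a compact set $K$. Then
\[
\sup_K\Bigl|\tfrac{f_k+g_k}{2}-\tfrac{f+g}{2}\Bigr|\le\tfrac12\bigl(\sup_K|f_k-f|+\sup_K|g_k-g|\bigr).
\]
Since the topology is metrizable, this sequential argument gives continuity.

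\emph{Order preservation and strictness.} Suppose $f\le g$. Pointwise we have $f=\tfrac{f+f}{2}\le\tfrac{f+g}{2}\le\tfrac{g+g}{2}=g$. If moreover $f<g$, choose $x_0$ with $f(x_0)<g(x_0)$. Then
\[
f(x_0)<\tfrac{f(x_0)+g(x_0)}{2}<g(x_0),
\]
so $f\ne\mathcal A(f,g)\ne g$. Hence $f<\mathcal A(f,g)<g$, and $\mathcal A$ is both left- and right-strict.

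\emph{Arithmetic with respect to $\mathcal L$.} This is the substantive step. We must show $\mathcal A^{\mathcal L}(f,g)\le\mathcal A(f,g)$, that is,
\[
\Bigl(\tfrac{f^*+g^*}{2}\Bigr)^{*}\le\tfrac{f+g}{2}.
\]
Since $\mathcal L$ is a decreasing involution, this is equivalent to
\[
\Bigl(\tfrac{f+g}{2}\Bigr)^{*}\le\tfrac{f^*+g^*}{2},
\]
which is the convexity of conjugation. To prove it, fix $x$ and any $y$. Then
\[
\langle x,y\rangle-\tfrac{f(y)+g(y)}{2}=\tfrac12\bigl(\langle x,y\rangle-f(y)\bigr)+\tfrac12\bigl(\langle x,y\rangle-g(y)\bigr)\le\tfrac12 f^*(x)+\tfrac12 g^*(x).
\]
Taking the supremum over $y$ gives the inequality.

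To pass back to the original form, apply $\mathcal L$ to both sides; this reverses the order. Then use $h^{**}=h$ for $h=\tfrac{f+g}{2}$, which holds because $h$ lies in the space and $\mathcal L^2=\mathrm{id}$. This yields
\[
\mathcal A^{\mathcal L}(f,g)=\Bigl(\tfrac{f^*+g^*}{2}\Bigr)^{*}\le\tfrac{f+g}{2}.
\]

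The only points needing care are that all the functions involved stay in $\operatorname{Conv}_{\mathrm{sc}}(\mathbb R^n;\mathbb R)$, so that $\mathcal L$ may be applied, and that the sign reversal is handled correctly. Neither is a real obstacle, since closure of the space under averaging was established above.
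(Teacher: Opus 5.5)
Your proposal is correct and follows essentially the same route as the paper: closure of the space under averaging, the pointwise strictness argument, and splitting the supremum to obtain $\bigl(\tfrac{f+g}{2}\bigr)^*\le\tfrac{f^*+g^*}{2}$. The differences are cosmetic: you prove continuity by a direct uniform estimate instead of citing that $C(\mathbb R^n,\mathbb R)$ is a topological vector space, and you apply $\mathcal L$ to the inequality and use biconjugation, whereas the paper substitutes $u=\mathcal L(f)$, $v=\mathcal L(g)$ into it.
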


\begin{proof}
The map $\mathcal A$ is well-defined because
$\operatorname{Conv}_{\mathrm{sc}}(\mathbb R^n;\mathbb R)$ is a convex
subset of $C(\mathbb R^n,\mathbb R)$. It is continuous because
$C(\mathbb R^n,\mathbb R)$, endowed with the compact-open topology, is
a topological vector space. Moreover,
\[
\mathcal A(f,g)=\mathcal A(g,f)
\qquad\text{and}\qquad
\mathcal A(f,f)=f,
\]
so $\mathcal A$ is a mean.

Next, we verify that it is strict order preserving. Suppose that $f<g$.
Then
\[
f(x)
\leq
\frac{f(x)+g(x)}{2}
\leq
g(x)
\]
for every $x\in\mathbb R^n$. Moreover, there exists
$y\in\mathbb R^n$ such that $f(y)<g(y)$, and hence
\[
f(y)
<
\frac{f(y)+g(y)}{2}
<
g(y).
\]
Therefore, $f<\mathcal A(f,g)<g.$

It remains to prove that $\mathcal A$ is arithmetic with respect to
$\mathcal L$. For every
$u,v\in\operatorname{Conv}_{\mathrm{sc}}(\mathbb R^n;\mathbb R)$ and
$x\in\mathbb R^n$, we have
\[
\begin{aligned}
\mathcal L\bigl(\mathcal A(u,v)\bigr)(x)
&=
\sup_{y\in\mathbb R^n}
\left\{
\langle x,y\rangle-\frac{u(y)+v(y)}{2}
\right\}                                                   \\
&=
\sup_{y\in\mathbb R^n}
\left\{
\frac{\langle x,y\rangle-u(y)}{2}
+
\frac{\langle x,y\rangle-v(y)}{2}
\right\}                                                   \\
&\leq
\frac{\mathcal L(u)(x)+\mathcal L(v)(x)}{2}                \\
&=
\mathcal A\bigl(\mathcal L(u),\mathcal L(v)\bigr)(x).
\end{aligned}
\]
Applying this inequality to $u=\mathcal L(f)$ and
$v=\mathcal L(g)$, and using $\mathcal L^2=\operatorname{id}$, we
obtain
\[
\begin{aligned}
\mathcal A^{\mathcal L}(f,g)
&=
\mathcal L\bigl(
\mathcal A(\mathcal L(f),\mathcal L(g))
\bigr)                                                     \\
&\leq
\mathcal A\bigl(\mathcal L^2(f),\mathcal L^2(g)\bigr)      \\
&=
\mathcal A(f,g).
\end{aligned}
\]
Thus, $\mathcal A$ is an arithmetic mean with respect to $\mathcal L$.
\end{proof}

Combining the preceding claims with
Theorem~\ref{t:order-preserving mean implies equivariant mean}, we
obtain the following result.

\begin{theorem}\label{t:geometric-mean-convex-functions}
The space
$\operatorname{Conv}_{\mathrm{sc}}(\mathbb R^n;\mathbb R)$ admits an
$\mathcal L$-equivariant mean
\[
\mathcal G:
\operatorname{Conv}_{\mathrm{sc}}(\mathbb R^n;\mathbb R)
\times
\operatorname{Conv}_{\mathrm{sc}}(\mathbb R^n;\mathbb R)
\longrightarrow
\operatorname{Conv}_{\mathrm{sc}}(\mathbb R^n;\mathbb R),
\]
which we call the geometric mean associated with $\mathcal A$ and
$\mathcal L$. In particular,
\[
\mathcal G(f^*,g^*)
=
\mathcal G(f,g)^*
\qquad
\text{for every }
f,g\in\operatorname{Conv}_{\mathrm{sc}}(\mathbb R^n;\mathbb R).
\]
Moreover, $\bigl(
\operatorname{Conv}_{\mathrm{sc}}(\mathbb R^n;\mathbb R),
\mathcal L
\bigr)$ is a $\mathbb Z_2$-$\mathrm{AR}$.
\end{theorem}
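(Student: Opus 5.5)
The plan is to assemble the preceding claims and feed them into Theorem~\ref{t:order-preserving mean implies equivariant mean}. The structure $\bigl(\operatorname{Conv}_{\mathrm{sc}}(\mathbb R^n;\mathbb R),\mathcal L,\leq,\wedge,\vee\bigr)$ is a $\mathbb Z_2$-decreasing lattice by Lemma~\ref{l:semilattice implies lattice}. The claims above give its remaining properties: it is locally convex with respect to $\leq$, it is $\sigma$-monotonically conditionally complete, and it is countably Dini. Moreover, $\mathcal A$ is a strict order-preserving arithmetic mean with respect to $\mathcal L$, so in particular it is an $\mathrm{RS}$-order-preserving arithmetic mean. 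These are exactly the hypotheses of Theorem~\ref{t:order-preserving mean implies equivariant mean}.

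Concretely, Proposition~\ref{p:Babylonian mean} shows that $\mathcal A$ is Babylonian. Its associated sequences are $a_{k+1}=\mathcal A(a_k,h_k)$, which decreases, and $h_{k+1}=\mathcal A^{\mathcal L}(a_k,h_k)=\bigl(\tfrac{a_k^*+h_k^*}{2}\bigr)^*$, which increases. They converge in the compact-open topology to a common limit. We then define
\[
\mathcal G(f,g):=\lim_{k\to\infty}a_k(f,g)=\lim_{k\to\infty}h_k(f,g).
\]
Theorem~\ref{t:Babylonian-mean-implies-equivariant-mean} shows that $\mathcal G$ is a continuous, symmetric, idempotent and $\mathcal L$-equivariant map. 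The equivariance $\mathcal G(\mathcal L f,\mathcal L g)=\mathcal L\,\mathcal G(f,g)$ is literally the identity $\mathcal G(f^*,g^*)=\mathcal G(f,g)^*$.

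For the last assertion, the claim above shows that $\operatorname{Conv}_{\mathrm{sc}}(\mathbb R^n;\mathbb R)$ is an $\mathrm{AR}$; it is metrizable, as noted earlier. Corollary~\ref{c:equivariant mean implies Z2AR}(2), applied with the equivariant mean $\mathcal G$, then yields that $\bigl(\operatorname{Conv}_{\mathrm{sc}}(\mathbb R^n;\mathbb R),\mathcal L\bigr)$ is a $\mathbb Z_2$-$\mathrm{AR}$.

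Each ingredient has already been established, so I expect no genuine obstacle. The only point to check is that the limits in Proposition~\ref{p:Babylonian mean} are taken in the lattice. The supremum there is the pointwise supremum, by the $\sigma$-completeness claim. The infimum need not be the pointwise infimum: it is obtained through $\mathcal L$ via Proposition~\ref{p:duality-monotone-sequences}. Convergence to it nevertheless follows from Proposition~\ref{p:duality-monotone-sequences}(1), so no additional pointwise argument is needed.
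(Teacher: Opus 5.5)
Your proposal is correct and matches the paper's argument: the paper proves the theorem by combining the preceding claims with Theorem~\ref{t:order-preserving mean implies equivariant mean}, exactly as you do, and you additionally spell out its internal steps (Proposition~\ref{p:Babylonian mean}, Theorem~\ref{t:Babylonian-mean-implies-equivariant-mean}, Corollary~\ref{c:equivariant mean implies Z2AR}). One minor correction to a side remark: your caveat that the lattice infimum of the decreasing sequence $(a_k)$ ``need not be'' the pointwise infimum is not accurate here, because the pointwise limit of a decreasing sequence of convex functions bounded below by an element of $\operatorname{Conv}_{\mathrm{sc}}(\mathbb R^n;\mathbb R)$ is itself convex, finite and supercoercive, hence is the infimum; your argument does not depend on this point, however.
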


\begin{remark}
By Lemma~\ref{l:properties-dual-mean}, the dual mean
\[
\mathcal A^{\mathcal L}(f,g)
=
\mathcal L\bigl(
\mathcal A(\mathcal L(f),\mathcal L(g))
\bigr)
\]
is a strict order-preserving harmonic mean with respect to
$\mathcal L$. This operation can be expressed in terms of the infimal
convolution.

Recall that the \textit{infimal convolution} of two functions
$f,g:\mathbb R^n\to\mathbb R\cup\{+\infty\}$ is defined by the rule
\[
(f\square g)(x):
=
\inf\left\{
f(x_1)+g(x_2):
x_1+x_2=x
\right\}
\]
(see, e.g., \cite[Chapter~B, Section~2.3]{HiriartUrrutyLemarechal}).
For $t>0$, the dilation of $f$ by $t$ is the function $f_t$ defined by
\[
f_t(x):
=
t f\left(\frac{x}{t}\right),
\qquad x\in\mathbb R^n
\]
(see \cite[Chapter~B, Section~2.2]{HiriartUrrutyLemarechal}).

Now, let
$f,g\in\operatorname{Conv}_{\mathrm{sc}}(\mathbb R^n;\mathbb R)$.
Recall that $f^*$ and $g^*$ are finite everywhere. Therefore, the
functions $\frac12 f^*$ and $\frac12 g^*$ are finite. It follows from
\cite[Theorem~16.4]{Rockafellar} that
\[
\left(\frac12 f^*+\frac12 g^*\right)^*
=
\left(\frac12 f^*\right)^*
\square
\left(\frac12 g^*\right)^*.
\]

Applying the scaling identity
\cite[Chapter~E, Proposition~1.3.1-(ii)]
{HiriartUrrutyLemarechal} to $f^*$ and $g^*$ with
$t=\frac12$, and using $f^{**}=f$ and $g^{**}=g$, we obtain
\[
\left(\frac12 f^*\right)^*=f_{\frac12}
\qquad\text{and}\qquad
\left(\frac12 g^*\right)^*=g_{\frac12}.
\]
Consequently,
\[
\begin{aligned}
\mathcal A^{\mathcal L}(f,g)
&=
\mathcal L\bigl(
\mathcal A(\mathcal L(f),\mathcal L(g))
\bigr)\\
&=
\left(\frac12 f^*+\frac12 g^*\right)^*\\
&=
f_{\frac12}\square g_{\frac12}.
\end{aligned}
\]
Equivalently,
\[
\mathcal A^{\mathcal L}(f,g)(x)
=
\inf_{y\in\mathbb R^n}
\left\{
\frac12 f(2y)
+
\frac12 g\bigl(2(x-y)\bigr)
\right\},
\qquad x\in\mathbb R^n.
\]
\end{remark}

\end{document}